%
%
\documentclass[a4paper]{article}

\usepackage[bookmarks=true]{hyperref}

\usepackage{amsmath}
\usepackage{amssymb}
\usepackage{amsthm}
\usepackage{nicefrac}

\newtheorem{theorem}{Theorem}[section]
\newtheorem{lemma}[theorem]{Lemma}
\newtheorem{proposition}[theorem]{Proposition}
\newtheorem{corollary}[theorem]{Corollary}

\theoremstyle{definition}
\newtheorem{definition}[theorem]{Definition}
\newtheorem{remark}[theorem]{Remark}

\newcommand{\Frechet}{Fr\'{e}chet }

\newcommand{\R}{\mathbb{R}}
\newcommand{\N}{\mathbb{N}}

\newcommand{\Borel}[1]{\mathcal{B}(#1)}
\newcommand{\LLeb}[4]{\mathcal{L}^{#1}([#2,#3],#4)}
\newcommand{\Leb}[4]{L^{#1}([#2,#3],#4)}
\newcommand{\ACp}[3]{AC_{L^p}([#1,#2],#3)}
\newcommand{\abpart}{a=t_0<t_1<\ldots<t_n=b}

\DeclareMathOperator{\esssup}{ess\: sup}
\DeclareMathOperator{\ev}{ev}
\DeclareMathOperator{\id}{id}
\DeclareMathOperator{\Evol}{Evol}
\DeclareMathOperator{\evol}{evol}
\DeclareMathOperator{\pr}{pr}
\DeclareMathOperator{\im}{im}
\DeclareMathOperator{\incl}{incl}
\DeclareMathOperator{\linspan}{span}
\begin{document}

\title{Measurable regularity of infinite-dimensional Lie groups based on Lusin measurability}
\author{Natalie Nikitin}
\date{}
\maketitle

\begin{abstract}
We discuss Lebesgue spaces $\mathcal{L}^p([a,b],E)$ of
Lusin measurable vector-valued functions and the corresponding vector spaces $AC_{L^p}([a,b],E)$
of absolutely continuous functions.
These can be used to construct Lie groups $AC_{L^p}([a,b],G)$ of absolutely
continuous functions with values in an infinite-dimensional Lie group $G$.
We extend the notion of $L^p$-regularity
of infinite-dimensional Lie groups
introduced by Gl\"ockner to this setting and adopt several results and tools.
\end{abstract}

\section*{Introduction}
In \cite{Milnor}, Milnor calls an infinite-dimensional Lie group $G$ (with Lie algebra
$\mathfrak{g}$ and identity element $e$) \emph{regular} if for every smooth curve
$\gamma\colon[0,1]\to \mathfrak{g}$ the initial value problem
	\begin{align}\label{eq:ilp}
		\eta^\prime = \eta.\gamma,\quad \eta(0)=e,
	\end{align}
has a (necessarily unique) solution $\Evol(\gamma)\colon[0,1]\to G$
and the function 
	\begin{align*}
		\evol\colon C^{\infty}([0,1],\mathfrak{g})\to G,\quad \gamma\mapsto\Evol(\gamma)(1)
	\end{align*}
so obtained is smooth.

Further, \cite{GlHReg} and \cite{NeebLieGr} deal with the concept of \emph{$C^k$-regularity},
investigating whether the above initial value problem has a solution for every
$C^k$-curve $\gamma$ (the solution $\Evol(\gamma)$ being a $C^{k+1}$-curve then) and
whether the function $\evol\colon C^k([0,1],\mathfrak{g})\to G$ is smooth.

Generalizing this theory even more, in \cite{GlMR} Gl\"ockner constructs Lebesgue spaces
$L_B^p([a,b],E)$
of Borel measurable functions $\gamma\colon[a,b]\to E$ with 
values in \Frechet spaces $E$ (for $p\in[1,\infty]$) and introduces spaces of
certain absolutely continuous $E$-valued functions $\eta\colon[a,b]\to E$ (denoted by 
$\ACp abE)$ with derivatives in $L_B^p([a,b],E)$. Having a Lie group structure on the
spaces $\ACp 01G$ available, in \cite{GlMR} a Fr\'{e}chet-Lie group $G$ is called 
\emph{$L^p$-semiregular}
if the initial value problem (\ref{eq:ilp}) has a solution $\Evol(\gamma)\in\ACp 01G$ for every
$\gamma\in L_B^p([0,1],\mathfrak{g})$, and $G$ is called \emph{$L^p$-regular} if 
it is $L^p$-semiregular and
the map $\Evol\colon L_B^p([0,1],\mathfrak{g})\to\ACp 01G, \gamma\mapsto\Evol(\gamma)$ is smooth.

Since the sum of two vector-valued Borel measurable functions may be not Borel measurable,
certain assumtions need to be made to obtain a vector space structure on the space of the
considerable maps. This implies that the concepts of \emph{$L^p$-regularity} (mentioned
above) only make sense for Fr\'{e}chet-Lie groups (and some other classes of Lie groups
described in \cite{GlMR}).

To loosen this limitation, we recall the notion of \emph{Lusin-measurable} functions
(Definition \ref{def:lusin-meas}), which have the advantage that vector-valued 
Lusin-measurable functions always form a vector space, and define the corresponding
Lebesgue spaces $\Leb pabE$ in Definition \ref{def:Lebesgue-space-Hausd}. 
Further, in Lemma \ref{lem:lusin-then-borel} and Lemma
\ref{lem:borel-then-lusin}, we recall that under certain conditions there is a close
relation between Lusin and Borel measurable functions (known as the Lusin's Theorem).
This leads to the result that the Lebesgue spaces $L_B^p([a,b],E)$ constructed in \cite{GlMR}
coincide with our Lebesgue spaces $\Leb pabE$, due to the conditions needed for
Borel measurable functions to form a vector space.
(Note that Lebesgue spaces of Lusin measurable functions are also considered in \cite{FlorMPIntegr},
for example.)

We lean on the theory established in \cite{GlMR} and construct $AC_{L^p}$-spaces for sequentially
complete locally convex spaces.
In Definition \ref{def:Lp-regular-Lie-gr} we define the notion of $L^p$-regularity for
infinite-dimensional Lie groups modelled on such spaces 
and adopt several useful results from \cite{GlMR}. In
particular:

\paragraph{Theorem.}
\emph{Let $G$ be an $L^p$-semiregular Lie group. Then 
$\Evol\colon L_B^p([0,1],\mathfrak{g})\to\ACp 01G$ is smooth if and only if $\Evol$
is smooth as a function to $C([0,1],G)$.
}
\medskip

As a consequence, we get:

\paragraph{Theorem.}
\emph{Let $G$ be a Lie group modelled on a sequentially complete locally
convex space and $p,q\in[1,\infty]$ with $q\geq p$. If
$G$ is $L^p$-regular, then $G$ is $L^q$-regular. Furthermore, in this case $G$ is $C^0$-regular.}
\medskip

Furthermore, we show:

\paragraph{Theorem.}
\emph{Let $G$ be a Lie group modelled on a sequentially complete locally convex
space. 
Let $\Omega\subseteq \Leb p01{\mathfrak{g}}$ be an open $0$-neighbourhood.
If for every $\gamma\in\Omega$ there exists the corresponding $\Evol(\gamma)\in\ACp 01G$, 
then $G$ is $L^p$-semiregular.
If, in addition, the function $\Evol\colon\Omega\to\ACp 01G$
is smooth, then $G$ is $L^p$-regular.}

\paragraph{Acknowledgement} The author is deeply grateful to H. Gl\"ockner for numeruous
advice and precious support.
This research was supported by the Deutsche Forschungsgemeinschaft, DFG, 
project number GL 357/9-1. 

\paragraph{Notation}
All the topological spaces are assumed Hausdorff (except for the $\mathcal{L}^p$-spaces),
all vector spaces are $\R$-vector spaces (and locally convex topological vector spaces
are called "locally convex spaces" for short). Wherever we write $[a,b]$, we always mean
an interval in $\R$ with $a<b$.

\section{Measurable functions}

Recall that the Borel $\sigma$-algebra $\Borel X$ on a topological space $X$ is the
$\sigma$-algebra generated by the open subsets of $X$. A function $\gamma\colon X\to Y$
between topological spaces is called \emph{Borel measurable} if the preimage $\gamma^{-1}(A)$
of every open (resp. closed) subset $A\subseteq Y$ is in $\Borel X$. Further, for a measure 
$\mu\colon\Borel X\to [0,\infty]$ on $X$, a subset $N\subseteq X$
is called \emph{$\mu$-negligible} if $N\subseteq N'$ for some $N'\in\Borel X$ with $\mu(N')=0$.
By saying that a property holds for $\mu$-almost every $x\in X$ (or for almost every $x\in X$, or for a.e. $x\in X$)
resp. $\mu$-almost everywhere (or almost everywhere, or a.e.)
we will always mean that there exists some $\mu$-negligible subset $N\subseteq X$ such that
the property holds for every $x\notin N$.

\begin{definition}\label{def:lusin-meas}
Let $X$ be a locally compact topological space, let $Y$ be a topological space and
 let $\mu\colon\Borel X\to [0,\infty]$ be a 
measure on $X$. A function $\gamma\colon X\to Y$ is called
\emph{Lusin $\mu$-measurable} (or \emph{$\mu$-measurable}) if for each compact subset $K\subseteq X$
there exists a sequence $(K_n)_{n\in\N}$ of
compact subsets $K_n\subseteq K$ such that each of the
restrictions $\gamma|_{K_n}$ is continuous and $\mu(K\setminus\bigcup_{n\in\N}K_n)=0$.
\end{definition}

\begin{remark}\label{rem:lusin-cont-comp}
Clearly, every continuous function $\gamma\colon X\to Y$ is $\mu$-measurable, more generally,
if $\gamma|_K$ is continuous for every compact subset $K\subseteq X$, then $\gamma$ is
$\mu$-measurable. Furthermore, it is easy to see that if $f\colon Y\to Z$ is a continuous function
between topological spaces and $\gamma\colon X\to Y$ is  $\mu$-measurable, then
the composition $f\circ\gamma\colon X\to Z$ is $\mu$-measurable. Actually, it may happen that
a composition $\gamma\circ f$ with a continuous function $f\colon X\to X$ is not $\mu$-measurable
(see \cite[IV \S 5 No. 3]{BI1-6}). However, a criterion for a composition of Lusin
measurable functions to be Lusin measurable can be found, for example, 
in \cite[Chapter I, Theorem 9]{Schwartz}. The next lemma describes a special case.
\end{remark}

\begin{lemma}\label{lem:lusin-special-cont-comp}
Let $f\colon X_1\to X_2$ be a continuous function between locally compact spaces,
let $Y$ be a topological space. Let $\mu_1$, $\mu_2$ be measures on $X_1$, $X_2$,
respectively, and assume that $\mu_1(f^{-1}(N))=0$, whenever $\mu_2(N)=0$. Then
for every $\mu_2$-measurable function $\gamma\colon X_2\to Y$, the composition
$\gamma\circ f\colon X_1\to Y$ is $\mu_1$-measurable.
\end{lemma}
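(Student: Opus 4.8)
The plan is to unwind the definition of Lusin $\mu_1$-measurability for $\gamma\circ f$ and reduce it to the known Lusin $\mu_2$-measurability of $\gamma$, transporting compact sets along $f$. So let $K\subseteq X_1$ be compact. Then $f(K)\subseteq X_2$ is compact by continuity of $f$, and since $\gamma$ is $\mu_2$-measurable there is a sequence $(L_n)_{n\in\N}$ of compact subsets $L_n\subseteq f(K)$ with each $\gamma|_{L_n}$ continuous and $\mu_2\bigl(f(K)\setminus\bigcup_{n\in\N}L_n\bigr)=0$. The natural candidate for the desired compact subsets of $K$ is $K_n:=K\cap f^{-1}(L_n)$: each $K_n$ is closed in $K$ (preimage of a closed set under a continuous map), hence compact, and $\gamma\circ f$ restricted to $K_n$ is continuous because $f(K_n)\subseteq L_n$ and $\gamma|_{L_n}$ is continuous, so $(\gamma\circ f)|_{K_n}=\bigl(\gamma|_{L_n}\bigr)\circ\bigl(f|_{K_n}\bigr)$ is a composition of continuous maps.

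It remains to check the negligibility condition $\mu_1\bigl(K\setminus\bigcup_{n\in\N}K_n\bigr)=0$. Here I would observe that
\[
  K\setminus\bigcup_{n\in\N}K_n \;=\; K\setminus\bigcup_{n\in\N}\bigl(K\cap f^{-1}(L_n)\bigr)
  \;=\; K\setminus f^{-1}\Bigl(\bigcup_{n\in\N}L_n\Bigr)
  \;\subseteq\; f^{-1}\Bigl(f(K)\setminus\bigcup_{n\in\N}L_n\Bigr).
\]
Write $N:=f(K)\setminus\bigcup_{n\in\N}L_n$. Strictly speaking the Lusin condition only gives that $N$ is $\mu_2$-negligible rather than itself Borel of measure zero, so I would pick a Borel set $N'\in\Borel{X_2}$ with $N\subseteq N'$ and $\mu_2(N')=0$; then $f^{-1}(N')\in\Borel{X_1}$ by continuity of $f$, and the hypothesis $\mu_1(f^{-1}(N'))=0$ applies to give that $f^{-1}(N)\subseteq f^{-1}(N')$ is $\mu_1$-negligible. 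Hence $K\setminus\bigcup_{n\in\N}K_n$ is contained in a $\mu_1$-negligible set, and since it is itself Borel (indeed closed in $K$, as $K\cap f^{-1}\bigl(\bigcup_n L_n\bigr)$ is a countable union of compact sets and $K\setminus(\cdot)$ need not be Borel — so more carefully I just note it is a subset of the $\mu_1$-negligible set $f^{-1}(N')$ and therefore $\mu_1$-negligible, which is all Definition \ref{def:lusin-meas} requires since that definition asks for the set to have measure zero, and a $\mu_1$-negligible Borel set does; if the set is not manifestly Borel one enlarges $K_n$ slightly or invokes that the definition's "$\mu(K\setminus\bigcup K_n)=0$" should be read as $\mu_1$-negligibility).

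I do not anticipate a serious obstacle; the argument is essentially a bookkeeping exercise in pushing compact sets through $f$ and pulling negligible sets back. The one point requiring a little care is the measure-zero bookkeeping just discussed: the set $K\setminus\bigcup_n K_n$ must be shown to be $\mu_1$-negligible, and this is exactly where the standing hypothesis "$\mu_1(f^{-1}(N))=0$ whenever $\mu_2(N)=0$" is used, after first replacing the a priori only $\mu_2$-negligible set $f(K)\setminus\bigcup_n L_n$ by a Borel $\mu_2$-null superset. Everything else — compactness of $f(K)$, closedness hence compactness of $K\cap f^{-1}(L_n)$ in $K$, and continuity of $(\gamma\circ f)|_{K_n}$ — is immediate from continuity of $f$ and the defining properties of the $L_n$.
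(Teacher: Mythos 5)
Your proof is correct and follows essentially the same route as the paper's: push $K$ forward to the compact set $f(K)$, apply Lusin measurability of $\gamma$ there, pull the compacts $L_n$ back to $K_n:=K\cap f^{-1}(L_n)$, and use the hypothesis on $f$ to handle the null set. The only superfluous detour is your worry about Borel-versus-negligible: both $f(K)\setminus\bigcup_n L_n$ and $K\setminus\bigcup_n K_n$ are automatically Borel (a compact set minus a countable union of compacts), so no enlargement to a Borel superset is needed.
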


\begin{proof}
Given a compact subset $K\subseteq X_1$, the continuous image $L:=f(K)\subseteq X_2$ is
compact, hence there exists a sequence $(L_n)_{n\in\N}$ of compact subsets of $L$
such that $\gamma|_{L_n}$ is continuous for every $n$ and 
$\mu_2(L\setminus\bigcup_{n\in\N}L_n)=0$.
Then each $K_n:=f^{-1}(L_n)\cap K$ is closed in $K$, hence compact and 
$\gamma\circ f|_{K_n}$ is continuous. Furthermore,
	\begin{align*}
		\mu_1(K\setminus\bigcup_{n\in\N} K_n)
		=\mu_1(K\setminus f^{-1}(\bigcup_{n\in\N}L_n))
		\leq \mu_1(f^{-1}(L\setminus\bigcup_{n\in\N}L_n))=0,
	\end{align*}
by assumption on $f$, whence $\gamma\circ f$ is $\mu_1$-measurable.
\end{proof}

We recall that a measure $\mu\colon \Borel X\to[0,\infty]$ on a locally compact space $X$
is called a \emph{Radon measure} if $\mu$ is inner regular and $\mu(K)<\infty$ for every compact
subset $K\subseteq X$. An essential criterion for measurability with respect to Radon measures
is recited below (and can be found, for example, in \cite[IV \S 5 No. 1]{BI1-6}).

\begin{lemma}\label{lem:lusin-meas-iff}
Let $X$ be a locally compact topological space, $Y$ be a topological space and let 
$\mu\colon\Borel X\to [0,\infty]$ be a Radon measure. A function
$\gamma\colon X\to Y$ is $\mu$-measurable if and only if for each $\varepsilon>0$
and each compact subset $K\subseteq X$ there exists a compact subset $K_\varepsilon\subseteq K$
such that $\gamma|_{K_\varepsilon}$ is continuous and $\mu(K\setminus K_\varepsilon)\leq\varepsilon$.
\end{lemma}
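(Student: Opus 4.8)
The plan is to prove Lemma~\ref{lem:lusin-meas-iff} by showing both implications, using the defining property of Radon measures (inner regularity plus finiteness on compacta) as the essential tool.

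\medskip

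\textbf{The ``only if'' direction.} Suppose $\gamma$ is $\mu$-measurable. Fix a compact set $K\subseteq X$ and $\varepsilon>0$. By Definition~\ref{def:lusin-meas} there is a sequence $(K_n)_{n\in\N}$ of compact subsets of $K$ with each $\gamma|_{K_n}$ continuous and $\mu\bigl(K\setminus\bigcup_{n\in\N}K_n\bigr)=0$. Passing to the increasing sequence $\widetilde{K}_n:=K_1\cup\cdots\cup K_n$ (still compact, still with continuous restriction of $\gamma$, by a standard pasting argument since the $K_j$ are closed in $\widetilde K_n$), we have $\widetilde K_n\uparrow\bigcup_n K_n$ and hence $\mu(K\setminus\widetilde K_n)\to\mu\bigl(K\setminus\bigcup_n K_n\bigr)=0$ by continuity from above of the measure $A\mapsto\mu(K\setminus A)$ along the increasing sequence $\widetilde K_n$; here finiteness $\mu(K)<\infty$ (Radon!) is what makes continuity from above legitimate. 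So for $n$ large enough we may take $K_\varepsilon:=\widetilde K_n$.

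\medskip

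\textbf{The ``if'' direction.} Conversely, assume the $\varepsilon$-criterion. Fix a compact $K\subseteq X$. For each $n\in\N$ apply the hypothesis with $\varepsilon=1/n$ to obtain a compact $K_n\subseteq K$ with $\gamma|_{K_n}$ continuous and $\mu(K\setminus K_n)\le 1/n$. Then $\mu\bigl(K\setminus\bigcup_{m\in\N}K_m\bigr)\le\mu(K\setminus K_n)\le 1/n$ for every $n$, so $\mu\bigl(K\setminus\bigcup_{m\in\N}K_m\bigr)=0$, which is exactly the condition in Definition~\ref{def:lusin-meas}. Note this direction does not even use inner regularity, only that the $1/n$ can be made to go to $0$.

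\medskip

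\textbf{Main obstacle.} The only genuinely delicate point is the ``only if'' direction: one must be careful that continuity of $\gamma$ on each $K_n$ really does give continuity on the finite union $\widetilde K_n$, and that the continuity-from-above step is valid. The first is the elementary fact that a function continuous on each of finitely many closed subsets of a space is continuous on their union; the second relies crucially on $\mu(K)<\infty$, which is built into the definition of a Radon measure, so there is no real difficulty, just bookkeeping. (Inner regularity of $\mu$ is not strictly needed in this particular proof once we have Definition~\ref{def:lusin-meas}, but it is part of the standing Radon hypothesis and guarantees the notion is non-degenerate; one could alternatively invoke it to produce the $K_n$ directly.)
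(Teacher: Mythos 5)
Your proof is correct. The ``only if'' direction is essentially the paper's argument: pass to the finite unions $K_1\cup\cdots\cup K_n$, use the pasting lemma for continuity on a finite union of closed sets, and use continuity from above (legitimate because $\mu(K)<\infty$ for a Radon measure) to make the complement small. The ``if'' direction is where you genuinely diverge, and your route is simpler: you take $K_n$ with $\mu(K\setminus K_n)\le 1/n$ directly from the hypothesis and observe that $\mu\bigl(K\setminus\bigcup_m K_m\bigr)\le 1/n$ for every $n$, which already meets Definition~\ref{def:lusin-meas}; no inner regularity is needed. The paper instead runs an inductive construction that shrinks each new compact set, via inner regularity, to a compact subset of $K_\varepsilon\setminus\bigcup_{m=1}^n K_m$, producing a \emph{pairwise disjoint} sequence. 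That extra work is not needed for the lemma itself --- your argument shows as much --- but it is deliberate: the disjointness is recorded in Remark~\ref{rem:lusin-meas-special-case} and exploited later (e.g.\ in Proposition~\ref{prop:l1-function-weak-integral}, where the weak integral is assembled as a sum of integrals over the disjoint pieces). So your proof establishes the stated equivalence with less machinery, at the cost of not delivering the refined sequence the paper wants downstream.
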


\begin{proof}
First, assume that $\gamma$ is $\mu$-measurable, let $K\subseteq X$ be a compact subset 
and fix $\varepsilon>0$. Given a sequence
$(K_n)_{n\in\N}$ of compact sets $K_n\subseteq K$ as in Definition \ref{def:lusin-meas}, the sequence
$(K\setminus \bigcup_{m=1}^n K_m)_{n\in\N}$ is decreasing and $\mu(K\setminus K_1)<\infty$.
Hence, we have
	\begin{align*}
		\lim_{n\to\infty} \mu(K\setminus\bigcup_{m=1}^n K_m)
		=\mu(\bigcap_{n\in\N}(K\setminus\bigcup_{m=1}^n K_m))
		=\mu(K\setminus\bigcup_{n\in\N}K_n)=0,
	\end{align*}
therefore, there exists some $N_\varepsilon\in\N$ such that 
$\mu(K\setminus\bigcup_{m=1}^{N_\varepsilon} K_m)\leq\varepsilon$.
As the union is finite, the set $K_\varepsilon:=\bigcup_{m=1}^{N_\varepsilon} K_m$ is compact and
$\gamma|_{K_\varepsilon}$ is continuous.

Now, we show that the converse is also true. To this end, for a compact set $K\subseteq X$
we construct a sequence $(K_n)_{n\in\N}$
of compact subsets of $K$ such that for each $n$ the restriction $\gamma|_{K_n}$ is continuous  
and $\mu(K\setminus\bigcup_{m=1}^n K_m)\leq\nicefrac{1}{n}$. Then
	\begin{align*}
		\mu(K\setminus\bigcup_{n\in\N}K_n)
		=\mu(\bigcap_{n\in\N}(K\setminus\bigcup_{m=1}^n K_m))
		=\lim_{n\to\infty} \mu(K\setminus\bigcup_{m=1}^n K_m)
		\leq \lim_{n\to\infty}\frac{1}{n}=0,
	\end{align*}
using the same arguments as in the first part of the proof, hence $\gamma$ will be $\mu$-measurable.

The construction of the sequence $(K_n)_{n\in\N}$ is made as follows.
For $n=1$, by assumption there is a compact subset $K_1\subseteq K$ such that $\gamma|_{K_1}$
is continuous and $\mu(K\setminus K_1)\leq1$ (taking $\varepsilon=1$). Having constructed
$K_1,\ldots,K_n$ with the required properties, we choose $\varepsilon=\nicefrac{1}{2(n+1)}$, then
the assumption yields a compact subset $K_\varepsilon\subseteq K$ such that 
$\gamma|_{K_\varepsilon}$ is continuous and $\mu(K\setminus K_\varepsilon)\leq\varepsilon$.
But by the inner regularity of $\mu$, for the measurable set 
$A:=K_\varepsilon\setminus\bigcup_{m=1}^n K_m\subseteq K$ there is a compact subset 
$K_{n+1}\subseteq A$ such that $\mu(A\setminus K_{n+1})\leq\varepsilon$ (since $\mu(A)<\infty$). As 
$K_{n+1}\subseteq K_\varepsilon$, the restriction $\gamma|_{K_{n+1}}$ is continuous
and we have
	\begin{align*}
		\mu(K\setminus\bigcup_{m=1}^{n+1} K_m)
		=\mu(K\setminus K_\varepsilon)+\mu(A\setminus K_{n+1})
		\leq\varepsilon=\frac{1}{n+1}.
	\end{align*}
\end{proof}

\begin{remark}\label{rem:lusin-meas-special-case}
Given a Radon measure $\mu$ on $X$, for a $\mu$-measurable function $\gamma\colon X\to Y$
and every compact subset $K\subseteq X$
we can always construct a sequence $(K_n)_{n\in\N}$ (proceeding as in the second part of the previous 
proof) which is pairwise disjoint,
this will be useful in Subsection \ref{subsec:weak-integrable-functions}, where we
consider some integrals of $\mu$-measurable functions. Using the inner regularity of $\mu$,
such a sequence can also be constructed for every Borel set $B\subseteq X$ with $\mu(B)<\infty$,
so that every function 
$\bar{\gamma}\colon X\to Y$ such that $\bar{\gamma}(x)=\gamma(x)$ for $\mu$-almost
every $x\in X$ is $\mu$-measurable.

Furthermore, if the locally compact space $X$ is $\sigma$-compact (in particular, if $X$ is compact),
then there is a sequence 
$(K_n)_{n\in\N}$ such that $\mu(X\setminus\bigcup_{n\in\N}K_n)=0$.
\end{remark}

In the following, the measure $\mu\colon\Borel X\to [0,\infty]$ on a locally compact
space $X$ will be always assumed a Radon measure. The proof of the following lemma
can be found (partially) in \cite[IV \S 5 No. 1]{BI1-6}

\begin{lemma}\label{lem:lusin-into-products}
Let $X$ be a locally compact topological space, and
$(Y_n)_{n\in\N}$ be topological spaces. A map 
$\gamma:=(\gamma_n)_{n\in\N}\colon X\to\prod_{n\in\N}Y_n$ 
is $\mu$-measurable if and only if each of the components $\gamma_n\colon X\to Y_n$ is 
$\mu$-measurable.
\end{lemma}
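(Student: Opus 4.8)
The plan is to prove both directions using the universal property of the product topology, which says a map into $\prod_{n\in\N}Y_n$ is continuous if and only if each component is continuous, combined with the fact that finite intersections of "large" compact subsets remain "large" with respect to a Radon measure.

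For the easy direction, suppose $\gamma=(\gamma_n)_{n\in\N}$ is $\mu$-measurable. Fix $m\in\N$. Given a compact subset $K\subseteq X$, Definition \ref{def:lusin-meas} yields compact sets $K_n\subseteq K$ with each $\gamma|_{K_n}$ continuous and $\mu(K\setminus\bigcup_{n\in\N}K_n)=0$. Since the coordinate projection $\pr_m\colon\prod_{n\in\N}Y_n\to Y_m$ is continuous, the restriction $\gamma_m|_{K_n}=\pr_m\circ(\gamma|_{K_n})$ is continuous for every $n$, so the very same sequence $(K_n)_{n\in\N}$ witnesses $\mu$-measurability of $\gamma_m$. (Alternatively, this already follows from Remark \ref{rem:lusin-cont-comp}, since $\gamma_m=\pr_m\circ\gamma$.)

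For the converse, assume each $\gamma_n$ is $\mu$-measurable. Here I would use the criterion of Lemma \ref{lem:lusin-meas-iff}. Fix a compact subset $K\subseteq X$ and $\varepsilon>0$. For each $n\in\N$, Lemma \ref{lem:lusin-meas-iff} applied to $\gamma_n$ (with the compact set $K$ and the tolerance $\varepsilon/2^n$) gives a compact subset $K^{(n)}\subseteq K$ with $\gamma_n|_{K^{(n)}}$ continuous and $\mu(K\setminus K^{(n)})\leq\varepsilon/2^n$. Put $K_\varepsilon:=\bigcap_{n\in\N}K^{(n)}$. This is a closed subset of the compact set $K$, hence compact. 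On $K_\varepsilon$ every component $\gamma_n$ restricts to a continuous map, so by the universal property of the product topology $\gamma|_{K_\varepsilon}$ is continuous. Finally
	\begin{align*}
		\mu(K\setminus K_\varepsilon)
		=\mu\Bigl(\bigcup_{n\in\N}(K\setminus K^{(n)})\Bigr)
		\leq\sum_{n\in\N}\mu(K\setminus K^{(n)})
		\leq\sum_{n\in\N}\frac{\varepsilon}{2^n}=\varepsilon.
	\end{align*}
Since $K$ and $\varepsilon$ were arbitrary, Lemma \ref{lem:lusin-meas-iff} shows that $\gamma$ is $\mu$-measurable.

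The main subtlety — and the reason one needs the Radon hypothesis and Lemma \ref{lem:lusin-meas-iff} rather than a naive argument straight from Definition \ref{def:lusin-meas} — is handling the countably many components simultaneously: a direct attempt to intersect the sequences $(K_n)$ coming from the definition for each component does not obviously produce a set whose complement in $K$ is $\mu$-null. Passing through the $\varepsilon$-criterion of Lemma \ref{lem:lusin-meas-iff} and summing a geometric series of tolerances $\varepsilon/2^n$ circumvents this cleanly. One should also note that the single intersection $K_\varepsilon=\bigcap_n K^{(n)}$ is legitimate precisely because a closed subset of a compact set is compact, so no completeness or metrizability assumption on $X$ is required.
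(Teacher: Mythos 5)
Your proof is correct and follows essentially the same route as the paper: the forward direction via continuity of the coordinate projections and Remark \ref{rem:lusin-cont-comp}, and the converse by applying Lemma \ref{lem:lusin-meas-iff} with tolerances $\varepsilon/2^n$, intersecting the resulting compact sets, and summing the geometric series. The only cosmetic difference is that you explicitly invoke the universal property of the product topology for the continuity of $\gamma|_{K_\varepsilon}$, which the paper leaves implicit.
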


\begin{proof}
Assume that $\gamma$ is $\mu$-measurable. Each of the coordinate projections 
$\pr_n\colon\prod_{n\in\N}Y_n\to Y_n$ is continuous, thus
by Remark \ref{rem:lusin-cont-comp} each of the compositions $\pr_n\circ\gamma = \gamma_n$
is $\mu$-measurable.

On the other hand, fix $\varepsilon>0$ and a compact subset $K\subseteq X$. Using Lemma 
\ref{lem:lusin-meas-iff} for each $n\in\N$ we find a compact subset $K_n\subseteq K$ such that
$\gamma_n|_{K_n}$ is continuous and $\mu(K\setminus K_n)\leq\nicefrac{\varepsilon}{2^n}$, as each
$\gamma_n$ is $\mu$-measurable. Then the intersection $K_\varepsilon := \bigcap_{n\in\N} K_n$
is a compact subset of $K$ with
	\begin{align*}
		\mu(K\setminus K_\varepsilon)
		=\mu(\bigcup_{n\in\N} (K\setminus K_n))
		\leq\sum_{n=1}^\infty \mu(K\setminus K_n)
		\leq \sum_{n=1}^\infty \frac{\varepsilon}{2^n}
		=\varepsilon.
	\end{align*}
Since $K_\varepsilon\subseteq K_n$ for each $n\in\N$, the restriction $\gamma|_{K_\varepsilon}$
is continuous, thus $\gamma$ is $\mu$-measurable by Lemma \ref{lem:lusin-meas-iff}.
\end{proof}

\begin{remark}\label{rem:lusin-vector-space}
If $E$ is a topological vector space, then for $\mu$-measurable functions
$\gamma,\eta\colon X\to E$ and every scalar $\alpha\in\R$, the functions
$\gamma+\eta\colon X\to E$, $\alpha\gamma\colon X\to E$ are $\mu$-measurable
(by Remark \ref{rem:lusin-cont-comp} and Lemma \ref{lem:lusin-into-products}). Thus, vector-valued
$\mu$-measurable functions build a vector space without any assumptions on the range $E$
(if $\mu$ is a Radon measure). This allows us to construct locally convex vector spaces
(the \emph{Lebesgue spaces}) in Section \ref{sec:Lebesgue-spaces} consisting
of certain $\mu$-measurable functions  with values
in arbitrary locally convex spaces, expanding the theory established in \cite{GlMR},
where the author deals with classical Borel measurable functions, which do not necessarily
form a vector space without additional assumptions on $E$.
\end{remark}

The relation between $\mu$-measurable functions and Borel measurable functions is
known as the Lusin's Theorem and can be found in several versions in \cite{B92}, \cite{Elst},
and others. Below, we recall the criterions which suffice for our purposes.

\begin{lemma}\label{lem:lusin-then-borel}
Assume that $X$ is a $\sigma$-compact locally compact space and
let $\gamma\colon X\to Y$ be a $\mu$-measurable function. Then there exists a Borel measurable function
$\bar{\gamma}\colon X\to Y$ such that $\bar{\gamma}(x)=\gamma(x)$ for almost every $x\in X$.
\end{lemma}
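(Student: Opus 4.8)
The plan is to build $\bar\gamma$ piece by piece over a $\sigma$-compact exhaustion of $X$, using $\mu$-measurability on each compact piece to extract compact subsets on which $\gamma$ restricts continuously, then to glue these together and set $\bar\gamma$ to a fixed constant on the leftover null set. First I would fix a point $y_0 \in Y$ (the target need not be a vector space, so any single point will do) and write $X = \bigcup_{m\in\N} C_m$ with each $C_m$ compact; since $X$ is $\sigma$-compact locally compact, by Remark~\ref{rem:lusin-meas-special-case} (applied with $B = X$, using that a $\sigma$-compact locally compact space has this property) we actually get a sequence $(K_n)_{n\in\N}$ of compact subsets of $X$ with $\gamma|_{K_n}$ continuous for every $n$ and $\mu(X\setminus\bigcup_{n\in\N}K_n)=0$. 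By replacing $K_n$ with $K_n\setminus\bigcup_{m<n}K_m$ — which is a Borel set but in general not compact — I instead keep the $K_n$ compact and pass to the disjointification at the level of Borel sets: set $A_1 := K_1$ and $A_n := K_n\setminus\bigcup_{m=1}^{n-1}K_m$ for $n\geq 2$, so the $A_n$ are pairwise disjoint Borel sets with $\bigcup_n A_n = \bigcup_n K_n =: K_\infty$ and $\mu(X\setminus K_\infty)=0$.

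Next I define $\bar\gamma\colon X\to Y$ by $\bar\gamma(x) := \gamma(x)$ if $x\in A_n$ (for the unique such $n$), and $\bar\gamma(x):=y_0$ if $x\notin K_\infty$. Since $A_n\subseteq K_n$ and $\gamma|_{K_n}$ is continuous, also $\gamma|_{A_n}$ is continuous, hence Borel measurable as a map $A_n\to Y$ (with $A_n$ carrying the subspace topology and Borel $\sigma$-algebra). The point is then that a map defined piecewise on countably many disjoint Borel pieces, Borel measurable on each, together with a constant on a remaining Borel set, is Borel measurable: for any open (or closed) $V\subseteq Y$, $\bar\gamma^{-1}(V) = \bigcup_n (\gamma|_{A_n})^{-1}(V) \;\cup\; (X\setminus K_\infty)\cap\{x : y_0\in V\}$, and each $(\gamma|_{A_n})^{-1}(V)$ is Borel in $A_n$, hence (as $A_n$ is Borel in $X$) Borel in $X$; a countable union of Borel sets is Borel, and $X\setminus K_\infty$ is Borel, so $\bar\gamma^{-1}(V)\in\Borel X$. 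Finally, $\bar\gamma(x)=\gamma(x)$ for every $x\in K_\infty$, and $X\setminus K_\infty$ is $\mu$-negligible (it is contained in a Borel set of measure zero, or itself Borel of measure zero), so $\bar\gamma=\gamma$ almost everywhere.

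I expect the main subtlety — rather than a genuine obstacle — to be the bookkeeping around compactness versus the Borel property: the sets $K_n$ furnished by Lusin $\mu$-measurability are compact but not disjoint, and their naive disjointifications $A_n$ are only Borel, not compact, so one cannot re-invoke the definition of $\mu$-measurability directly on the $A_n$ but must argue at the level of Borel measurability of the restriction $\gamma|_{A_n}$ (which is immediate from continuity of $\gamma|_{K_n}$). One should also be mildly careful that the $\sigma$-compactness hypothesis is exactly what lets Remark~\ref{rem:lusin-meas-special-case} produce a single sequence $(K_n)$ with $\mu(X\setminus\bigcup_n K_n)=0$, rather than only a sequence relative to a prescribed compact set $K$; without $\sigma$-compactness the constructed $\bar\gamma$ could fail to be modified on a genuinely negligible set. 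Beyond that the argument is routine, and no hypothesis on $Y$ (such as metrizability or being a vector space) is needed, since we only ever use continuity of restrictions and the choice of a single constant value $y_0\in Y$.
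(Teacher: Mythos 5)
Your proof is correct and follows essentially the same route as the paper: take the sequence $(K_n)_{n\in\N}$ furnished by Remark \ref{rem:lusin-meas-special-case} via $\sigma$-compactness, set $\bar{\gamma}=\gamma$ on $\bigcup_n K_n$ and $\bar{\gamma}=y_0$ on the Borel null complement, and verify Borel measurability by writing preimages as countable unions of Borel sets. Your disjointification of the $K_n$ into Borel pieces $A_n$ is harmless but unnecessary — the resulting function is identical, and the paper checks measurability directly by noting that $\gamma^{-1}(A)\cap K_n$ is closed in the compact $K_n$, hence closed in $X$, for closed $A\subseteq Y$.
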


\begin{proof}
Given a sequence of compact subsets $(K_n)_{n\in\N}$ as in Remark \ref{rem:lusin-meas-special-case}, 
we set $X\setminus\bigcup_{n\in\N} K_n=:N\in\Borel X$ and
define the function $\bar{\gamma}\colon X\to Y$ via
	\begin{align}
		\bar{\gamma}(x):=\gamma(x)\mbox{, if } x\in \bigcup_{n\in\N}K_n,\quad
		\bar{\gamma}(x):=y_0\in Y\mbox{, if } x\in N.
	\end{align}
Then obviously $\bar{\gamma}(x)=\gamma(x)$ a.e. Now we show
that $\bar{\gamma}^{-1}(A)\in\Borel X$ for every closed subset $A\subseteq Y$. Consider
	\begin{align*}
		\bar{\gamma}^{-1}(A) = (\bar{\gamma}^{-1}(A)\cap N)\cup(\bar{\gamma}^{-1}(A)\cap\bigcup_{n\in\N}K_n).
	\end{align*}
If $y_0\in A$, then $\bar{\gamma}^{-1}(A)\cap N = N \in\Borel X$, otherwise 
$\bar{\gamma}^{-1}(A)\cap N=\emptyset\in\Borel X$. Further
	\begin{align*}
		\bar{\gamma}^{-1}(A)\cap\bigcup_{n\in\N}K_n 
		= \bigcup_{n\in\N}(\bar{\gamma}^{-1}(A)\cap K_n)
		= \bigcup_{n\in\N}(\gamma^{-1}(A)\cap K_n)\in\Borel X,
	\end{align*}
being a countable union of sets closed in $X$ (being closed in $K_n$).
\end{proof}

\begin{lemma}\label{lem:borel-then-lusin}
Let $Y$ be a topological space with countable base. Then each Borel measurable function 
$\gamma\colon X\to Y$ is $\mu$-measurable.
\end{lemma}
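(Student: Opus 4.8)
We want: if $Y$ has a countable base and $\gamma\colon X\to Y$ is Borel measurable, then $\gamma$ is Lusin $\mu$-measurable (for $\mu$ a Radon measure on the locally compact space $X$). The natural route is to invoke the criterion of Lemma \ref{lem:lusin-meas-iff}: fix a compact $K\subseteq X$ and $\varepsilon>0$, and produce a compact $K_\varepsilon\subseteq K$ with $\gamma|_{K_\varepsilon}$ continuous and $\mu(K\setminus K_\varepsilon)\leq\varepsilon$.

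The key idea is to use the countable base $(V_j)_{j\in\N}$ of $Y$. For each $j$, the set $\gamma^{-1}(V_j)$ is Borel in $X$, so $\gamma^{-1}(V_j)\cap K$ is a Borel subset of the compact set $K$, which has finite measure. By inner regularity of $\mu$ (applied to $\gamma^{-1}(V_j)\cap K$) together with outer regularity, or more directly by regularity of finite Borel measures on locally compact spaces, we may choose a compact set $C_j\subseteq \gamma^{-1}(V_j)\cap K$ and an open set $U_j\supseteq \gamma^{-1}(V_j)\cap K$ (relatively open in $K$) such that $\mu(U_j\setminus C_j)\leq \varepsilon/2^{j}$; equivalently, the symmetric difference of $\gamma^{-1}(V_j)\cap K$ with the relatively clopen-ish approximant is small. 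In fact it is cleaner to approximate $\gamma^{-1}(V_j)$ inside $K$ from inside by a compact set $C_j$ and from outside by a $K$-open set $U_j$ with $\mu((U_j\setminus C_j)\cap K)\leq\varepsilon/2^{j+1}$. Let $N_j:=(U_j\setminus C_j)\cap K$, set $K_\varepsilon:=K\setminus\bigcup_{j\in\N}N_j$; this is a closed, hence compact, subset of $K$, and $\mu(K\setminus K_\varepsilon)\leq\sum_j \varepsilon/2^{j+1}\leq\varepsilon$.

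It remains to check that $\gamma|_{K_\varepsilon}$ is continuous. Here is where the countable base does the work: for each $j$, on $K_\varepsilon$ we have $(\gamma|_{K_\varepsilon})^{-1}(V_j)=\gamma^{-1}(V_j)\cap K_\varepsilon$, and by construction $C_j\cap K_\varepsilon\subseteq \gamma^{-1}(V_j)\cap K_\varepsilon\subseteq U_j\cap K_\varepsilon$, while $(U_j\setminus C_j)\cap K_\varepsilon=\emptyset$; hence $\gamma^{-1}(V_j)\cap K_\varepsilon=C_j\cap K_\varepsilon=U_j\cap K_\varepsilon$, which is simultaneously relatively closed and relatively open in $K_\varepsilon$. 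Since $(V_j)$ is a base of $Y$, every open subset of $Y$ is a union of some of the $V_j$, so its preimage under $\gamma|_{K_\varepsilon}$ is a union of relatively open sets, hence relatively open; thus $\gamma|_{K_\varepsilon}$ is continuous. By Lemma \ref{lem:lusin-meas-iff}, $\gamma$ is $\mu$-measurable.

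The main obstacle to watch is the regularity input: one must make sure the approximation of the Borel sets $\gamma^{-1}(V_j)$ is legitimate. This is fine because $\mu$ restricted to the compact set $K$ is a finite Borel measure, inner regular by hypothesis; finite inner-regular Borel measures on a locally compact (Hausdorff) space are also outer regular on sets of finite measure, which gives both the compact inner approximant $C_j$ and the open outer approximant $U_j$. A minor point is that "open" should be taken relative to $K$ (or one intersects a genuinely open subset of $X$ with $K$); either way the argument is unaffected. No completeness or local convexity of $Y$ is needed — only the countable base, which is exactly what converts "Borel preimages" into "continuity on a large compact set."
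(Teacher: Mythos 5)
Your proof is correct and follows essentially the same route as the paper: both invoke Lemma \ref{lem:lusin-meas-iff}, approximate the sets $\gamma^{-1}(V_j)\cap K$ using inner regularity with weights $\varepsilon/2^{j+1}$, remove the small exceptional sets to obtain a compact $K_\varepsilon$, and use the countable base to verify continuity of $\gamma|_{K_\varepsilon}$. The ``outer regularity'' you invoke is exactly inner regularity applied to the complement $K\setminus\gamma^{-1}(V_j)$ (your $U_j$ is $K\setminus K'_j$ in the paper's notation), so your $K_\varepsilon$ coincides with the paper's $\bigcap_j(K_j\cup K'_j)$; the only cosmetic difference is that you check continuity via relatively clopen preimages of basic sets rather than pointwise.
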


\begin{proof}
We denote by $(V_n)_{n\in\N}$ the countable base of $Y$. Fix a compact subset $K\subseteq X$ and
$\varepsilon>0$. Consider the sets
	\begin{align*}
		B_n:=K\cap \gamma^{-1}(V_n),\quad C_n:=K\setminus\gamma^{-1}(V_n),
	\end{align*}
which are Borel sets with finite measure. Thus, the inner regularity of $\mu$ yields compact subsets
$K_n\subseteq B_n$, $K'_n\subseteq C_n$ such that 
$\mu(B_n\setminus K_n)$, $\mu(C_n\setminus K'_n)\leq\nicefrac{\varepsilon}{2^{n+1}}$.
The intersection $K_\varepsilon:=\bigcap_{n\in\N}(K_n\cup K'_n)$ is a compact subset of $K$ and 
	\begin{align*}
		\mu(K\setminus K_\varepsilon)
		&=\mu(K\setminus\bigcap_{n\in\N}(K_n\cup K'_n)
		=\mu(\bigcup_{n\in\N} K\setminus(K_n\cup K'_n))\\
		&\leq\sum_{n=1}^\infty \mu(K\setminus(K_n\cup K'_n))
		=\sum_{n=1}^\infty (\mu(B_n\setminus K_n) + \mu(C_n\setminus K'_n))\\
		&\leq\sum_{n=1}^\infty\frac{\varepsilon}{2^n} = \varepsilon.
	\end{align*}
It remains to show the continuity of $\gamma|_{K_\varepsilon}$, then $\gamma$ will be
$\mu$-measurable by Lemma \ref{lem:lusin-meas-iff}. Fix some $x\in K_\varepsilon$ and
let $V_x\subseteq Y$ be a neighborhood of $\gamma(x)$. Then $\gamma(x)\subseteq V_m\subseteq V_x$
for some $m$, thus $x\in K_m$, that is $x\in X\setminus K'_m$ which is an open subset in $X$. 
Then $U_x:= K_\varepsilon\cap(X\setminus K'_m)$ is an open $x$-neighborhood in $K_\varepsilon$
and $U_x\subseteq K_m$, whence $\gamma(U_x)\subseteq V_m\subseteq V_x$. Therefore, the restriction
$\gamma|_{K_\varepsilon}$ is continuous in $x$, hence continuous, as $x$ was arbitrary.
\end{proof}

\section{Lebesgue spaces}\label{sec:Lebesgue-spaces}

\subsection{Definition and basic properties}

First, we recall the definition of Lebesgue spaces of real-valued Borel measurable functions.

\begin{definition}\label{def:Lebesgue-spaces-real}
For $p\in[1,\infty[$, we denote by $\mathcal{L}^p([a,b])$ the vector space of Borel measurable
functions $\gamma\colon[a,b]\to \R$ which are $p$-integrable with respect to the 
Lebesgue-Borel measure $\lambda$
(that is $\int_a^b |\gamma(t)|^p \,dt<\infty$), endowed with the seminorm
	\begin{align*}
		\|\gamma\|_{\mathcal{L}^p} := \left(\int_a^b |\gamma(t)|^p \,dt\right)^\frac{1}{p}.
	\end{align*}
Further, $\mathcal{L}^\infty([a,b])$ denotes the vector space of Borel measurable essentially bounded
functions $\gamma\colon[a,b]\to \R$, endowed with the seminorm
	\begin{align*}
		\|\gamma\|_{\mathcal{L}^\infty}:=\esssup_{t\in[a,b]}|\gamma(t)|.
	\end{align*}

Setting $N_p:=\{\gamma\in\mathcal{L}^p([a,b]) : \|\gamma\|_{\mathcal{L}^p}=0\}$, 
we obtain normed vector spaces $L^p([a,b]) := \mathcal{L}^p([a,b]) / N_p$.
\end{definition}

In \cite{GlMR}, the author defines Lebesgue spaces of Borel measurable functions 
with values in \Frechet spaces as follows.

\begin{definition}\label{def:Lebesgue-space-classical}
Let $E$ be a \Frechet space. For $p\in[1,\infty[$, the space $\mathcal{L}_{B}^p([a,b],E)$ is the
vector space of Borel measurable functions $\gamma\colon [a,b]\to E$ such that $\gamma([a,b])$
is separable and $q\circ\gamma\in\mathcal{L}^p([a,b])$ for each continuous seminorm $q$ on $E$. 
The locally convex topology on $\mathcal{L}_{B}^p([a,b],E)$ is defined by the (countable) family of
seminorms	\begin{align*}
		\|. \|_{\mathcal{L}^p,q}\colon \mathcal{L}_{B}^p([a,b],E)\to[0,\infty[,\quad
		\|\gamma\|_{\mathcal{L}^p,q}:=\|q\circ\gamma\|_{\mathcal{L}^p}
		=\left(\int_a^b q(\gamma(t))^p \,dt\right)^\frac{1}{p}.
	\end{align*}
Further, the vector space $\mathcal{L}_{B}^\infty([a,b],E)$ consists of Borel measurable
functions $\gamma\colon [a,b]\to E$ such that $\gamma([a,b])$ is separable and bounded. The locally
convex topology on $\mathcal{L}_{B}^\infty([a,b],E)$ is defined by the (countable) family
of seminorms
	\begin{align*}
		\|.\|_{\mathcal{L}^\infty,q}\colon \mathcal{L}_{B}^\infty([a,b],E)\to[0,\infty[,\quad
		\|\gamma\|_{\mathcal{L}^\infty,q}:=\|q\circ\gamma\|_{\mathcal{L}^\infty}
		=\esssup_{t\in[a,b]} q(\gamma(t)).
	\end{align*}

Setting $N_p:=\{\gamma\in\mathcal{L}_{B}^p([a,b],E) : \gamma(t)=0 \mbox{ for a.e. } t\in[a,b] \}$
we obtain a Hausdorff locally convex space
	\begin{align*}
		L_B^p([a,b],E) := \mathcal{L}_{B}^p([a,b],E) / N_p,
	\end{align*}
consisting of equivalence classes 
	\begin{align*}
		[\gamma] := \{\bar{\gamma}\in \mathcal{L}_{B}^p([a,b],E) : \bar{\gamma}(t) = 
		\gamma(t)\mbox{ for a.e. }t\in[a,b] \}, 
	\end{align*}
with seminorms
	\begin{align*}
		\|[\gamma]\|_{L^p,q}:=\|\gamma\|_{\mathcal{L}^p,q}.
	\end{align*}
\end{definition}

\begin{remark}\label{rem:Lebesgue-space-FEP}
For locally convex spaces $E$ having the property that every separable closed vector
subspace $S\subseteq E$ can be written as a union $S=\bigcup_{n\in\N} F_n$
of vector subspaces $F_1\subseteq F_2\subseteq \cdots$ which are \Frechet spaces
in the induced topology (called (FEP)-\emph{spaces} in \cite{GlMR}), the spaces $\mathcal{L}_B^p([a,b],E)$
and $L_B^p([a,b],E)$ are constructed in \cite{GlMR} in the same way.
\end{remark}

\begin{definition}\label{def:Lebesgue-space-rc}
If $E$ is an arbitrary locally convex space, then the vector space $\mathcal{L}_{rc}^\infty([a,b],E)$
consists of Borel measurable functions $\gamma\colon [a,b]\to E$ such that $\overline{\gamma([a,b])}$
is compact and metrizable. The seminorms 
$\|\gamma\|_{\mathcal{L}^\infty,q}$ (as in Definition \ref{def:Lebesgue-space-classical})
define the locally convex topology on $\mathcal{L}_{rc}^\infty([a,b],E)$.

Setting $N_{rc}:=\{\gamma\in\mathcal{L}_{rc}^{\infty}([a,b],E) : \gamma(t)=0 \mbox{ for a.e. } t\in[a,b] \}$
we obtain a Hausdorff locally convex space
	\begin{align*}
		L_{rc}^{\infty}([a,b],E) := \mathcal{L}_{rc}^{\infty}([a,b],E) / N_{rc},
	\end{align*}
consisting of equivalence classes 
	\begin{align*}
		[\gamma] := \{\bar{\gamma}\in \mathcal{L}_{rc}^{\infty}([a,b],E) : \bar{\gamma}(t) = 
		\gamma(t)\mbox{ for a.e. }t\in[a,b] \},
	\end{align*}
with seminorms
	\begin{align*}
		\|[\gamma]\|_{L^{\infty},q}:=\|\gamma\|_{\mathcal{L}^{\infty},q}.
	\end{align*}
\end{definition}

\begin{remark}
Note that in \cite{GlMR}, the author constructs all of the above Lebesgue spaces even in a more general form,
consisting of Borel measurable functions $\gamma\colon X\to E$ defined on arbitrary measure
spaces $(X,\Sigma,\mu)$.
\end{remark}

As we do not need additional assumptions on the range space $E$ or on the images
of Lusin measurable functions for them to build a vector space, 
we may construct Lebesgue spaces for arbitrary locally
convex spaces. From now on, the measure will always be the Lebesgue-Borel measure $\lambda$
and we will call Lusin $\lambda$-measurable functions just \emph{measurable}.

\begin{definition}\label{def:Lebesgue-space}
Let $E$ be a locally convex space and $p\in[1,\infty]$. We denote by $\LLeb pabE$ the vector
space of measurable functions $\gamma\colon[a,b]\to E$ such that for each continuous seminorm
$q$ on $E$ we have $q\circ\gamma\in\mathcal{L}^p([a,b])$. We endow each $\LLeb pabE$ with
the locally convex topology defined by the family of seminorms 
	\begin{align*}
		\|. \|_{\mathcal{L}^p,q}\colon \LLeb pabE\to[0,\infty[,\quad
		\|\gamma\|_{\mathcal{L}^p,q}:=\|q\circ\gamma\|_{\mathcal{L}^p}.
	\end{align*}
\end{definition}

The next lemma can be found in \cite{FlorMPIntegr}.

\begin{lemma}\label{lem:lusin-almost-zero}
Let $E$ be a locally convex space, $\gamma\colon[a,b]\to E$ be a measurable function.
Then the following assertions are equivalent:
	\begin{itemize}
		\item[(i)] $\gamma(t)=0$ a.e.,
		\item[(ii)] $\alpha(\gamma(t))=0$ a.e., for each continuous linear functional $\alpha$ on E,
		\item[(iii)] $q(\gamma(t))=0$ a.e., for each continuous seminorm $q$ on $E$.
	\end{itemize}
\end{lemma}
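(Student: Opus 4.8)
The plan is to prove the cycle of implications $(i)\Rightarrow(iii)\Rightarrow(ii)\Rightarrow(i)$, so that all three are equivalent.

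\medskip

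The implication $(i)\Rightarrow(iii)$ is immediate: if $N\subseteq[a,b]$ is a negligible set with $\gamma(t)=0$ for $t\notin N$, then for any continuous seminorm $q$ we have $q(\gamma(t))=q(0)=0$ for $t\notin N$, so $q\circ\gamma$ vanishes a.e. The implication $(iii)\Rightarrow(ii)$ is also easy, using that every continuous linear functional $\alpha$ on $E$ satisfies $|\alpha(x)|\le q(x)$ for some continuous seminorm $q$ (take $q:=|\alpha|$, which is itself a continuous seminorm); then $\alpha(\gamma(t))=0$ wherever $q(\gamma(t))=0$, which is a.e.

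\medskip

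The main work is the implication $(ii)\Rightarrow(i)$, and this is where I expect the obstacle to lie, since one must pass from "every functional vanishes a.e." (with the negligible set depending on the functional) to a single negligible set off which $\gamma$ itself is zero. The idea is to exploit Lusin measurability to reduce to a separable situation. Fix a compact $K:=[a,b]$ and, by Lemma \ref{lem:lusin-meas-iff} applied with $\varepsilon=1/n$, choose compact $K_n\subseteq[a,b]$ with $\gamma|_{K_n}$ continuous and $\lambda([a,b]\setminus K_n)\le 1/n$; replacing $K_n$ by $\bigcup_{m=1}^n K_m$ we may assume $K_1\subseteq K_2\subseteq\cdots$, and then $N_0:=[a,b]\setminus\bigcup_n K_n$ is negligible. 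On each $K_n$ the image $\gamma(K_n)$ is a continuous image of a compact metrizable space, hence compact and separable; therefore $S:=\overline{\linspan\,\gamma(\bigcup_n K_n)}$ is a separable (closed) vector subspace of $E$. Now choose a countable subset $D=\{x_k:k\in\N\}$ dense in $\gamma(\bigcup_n K_n)$, and for each $k$ with $x_k\ne 0$ pick, by Hahn--Banach, a continuous linear functional $\alpha_k$ on $E$ with $\alpha_k(x_k)\ne 0$. By hypothesis each $\alpha_k\circ\gamma$ vanishes off a negligible set $N_k$; put $N:=N_0\cup\bigcup_k N_k$, still negligible. I claim $\gamma(t)=0$ for all $t\notin N$: indeed such a $t$ lies in some $K_n$, so $\gamma(t)\in\gamma(\bigcup_m K_m)$; if $\gamma(t)\ne 0$ then, choosing $x_k$ close enough to $\gamma(t)$ so that $x_k\ne 0$ and $\alpha_k(\gamma(t))$ is close to $\alpha_k(x_k)\ne 0$ — more carefully, one uses that the $x_k$ approximate $\gamma(t)$ and that some fixed $\alpha_j$ separates a nearby nonzero point — we contradict $\alpha_k(\gamma(t))=0$.

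\medskip

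The delicate point in the last step is that approximating $\gamma(t)$ by $x_k$ does not by itself force $\alpha_k(\gamma(t))\ne 0$, since the functionals vary with $k$. The clean fix is to argue directly on the value $x:=\gamma(t)$ rather than through the dense set: if $x\ne 0$, Hahn--Banach gives a single continuous functional $\alpha$ with $\alpha(x)\ne 0$, but a priori $\alpha$ need not lie in our countable family. To handle this, I would instead build the countable family of functionals so that it is \emph{norming} on the separable space $S$: a standard fact is that on a separable locally convex (indeed, on any separable subspace) there is a countable family $(\alpha_k)$ of continuous linear functionals on $E$ that separates the points of $S$ (restrict to a countable dense set and separate each nonzero difference, or use that the dual of a separable space has a weak-$*$ dense sequence on bounded sets). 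With such a separating sequence, $t\notin N$ with $\gamma(t)\in S$ and $\alpha_k(\gamma(t))=0$ for all $k$ forces $\gamma(t)=0$, completing the proof. I would cite \cite{FlorMPIntegr} for the argument, and note that separability of $\gamma([a,b]\setminus N_0)$ — which is what makes the countable separating family available — is exactly the output of Lusin measurability, so no hypothesis on $E$ beyond local convexity is needed.
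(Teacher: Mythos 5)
Your handling of (i)$\Rightarrow$(iii) and (iii)$\Rightarrow$(ii) is fine, and you correctly locate the substance in (ii)$\Rightarrow$(i). (The paper gives no proof of this lemma, citing \cite{FlorMPIntegr}, so the comparison is against what a correct argument must supply.) The gap is in the ``standard fact'' your fix rests on: it is \emph{not} true that every separable subspace $S$ of a locally convex space admits a countable family of continuous linear functionals on $E$ separating the points of $S$, and neither of your justifications establishes it. Separating the pairwise differences of a countable dense set $D$ does not separate a point of $S\setminus D$ from $0$; and ``weak-$*$ dense sequences on bounded sets'' is a Banach-space phenomenon. A concrete counterexample to the general claim is $E=S=\R^{\R}$ with the product topology: it is separable (Hewitt--Marczewski--Pondiczery), but its dual consists of finitely supported combinations of coordinate evaluations, so any countable family of functionals involves only countably many coordinates and annihilates some nonzero element. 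What \emph{is} true --- and what this paper uses elsewhere, in the proof of Lemma \ref{lem:C1-maps-act-on-AC-vecsp} --- is that the span of a compact metrizable subset carries a coarser separable \emph{metrizable} locally convex topology (\cite[Lemma 1.11]{GlMR}), and separable metrizable locally convex spaces do admit countable separating families of continuous functionals. Since your set $\gamma(\bigcup_n K_n)$ is a countable union of compact metrizable sets, your route can be repaired along these lines, but as written the decisive step is unsupported and the supporting claim is false in the stated generality.

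There is a more elementary way to close (ii)$\Rightarrow$(i) that avoids separating families entirely: argue on each $K_n$ directly. Put $A_n:=\{t\in K_n:\gamma(t)\neq 0\}$. For $t_0\in A_n$ choose, by Hahn--Banach, some $\alpha\in E'$ with $\alpha(\gamma(t_0))\neq 0$; then $U_{t_0}:=\{t\in K_n:\alpha(\gamma(t))\neq 0\}$ is relatively open in $K_n$ (since $\alpha\circ\gamma|_{K_n}$ is continuous), contains $t_0$, and is $\lambda$-null by hypothesis (ii). As $A_n\subseteq\R$ is Lindel\"of, countably many of the sets $U_{t_0}$ cover $A_n$, whence $\lambda(A_n)=0$. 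Doing this for every $n$ and adding the null complement of $\bigcup_n K_n$ yields (i). This uses only the continuity of $\gamma$ on each $K_n$ --- exactly the content of Lusin measurability --- and the Hausdorff local convexity of $E$, with no separability machinery at all.
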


\begin{definition}\label{def:Lebesgue-space-Hausd}
For $p\in[1,\infty]$, from the above lemma follows that
	\begin{align*}
		N_p:= \{\gamma\in\LLeb pabE : \gamma(t)=0\mbox{ a.e.}\} = \overline{\{0\}},
	\end{align*}
thus we obtain Hausdorff locally convex spaces
	\begin{align*}
		\Leb pabE := \LLeb pabE/N_p
	\end{align*}
consisting of equivalence classes
	\begin{align*}
		[\gamma]:=\{\bar{\gamma}\in\LLeb pabE : \bar{\gamma}(t)=\gamma(t)\mbox{ a.e.}\},
	\end{align*}
with seminorms
	\begin{align*}
		\|.\|_{L^p,q}\colon \Leb pabE\to[0,\infty[,\quad
		\|[\gamma]\|_{L^p,q}:=\|\gamma\|_{\mathcal{L}^p,q}.
	\end{align*}
\end{definition}

\begin{remark}\label{rem:Lp-inclusions}
For $1\leq p\leq r\leq\infty$ we have
	\begin{align*}
		C([a,b],E)\subseteq
		\LLeb {\infty}abE \subseteq
		\LLeb rabE \subseteq
		\LLeb pabE \subseteq
		\LLeb 1abE
	\end{align*}
with continuous inclusion maps, as for a continuous seminorm $q$ on $E$ we have
	\begin{align*}
		\|\gamma\|_{\mathcal{L}^p,q}\leq (b-a)^{\frac{1}{p}-\frac{1}{r}}\|\gamma\|_{\mathcal{L}^r,q}.
	\end{align*}
(Here, $C([a,b],E)$ is endowed with the topology of uniform convergence, with continuous seminorms
$\|\gamma\|_{\infty,q}=\|\gamma\|_{\mathcal{L}^\infty,q}$.)
\end{remark}

We show that the Lebesgue spaces $L_B^p([a,b],E)$ constructed in \cite{GlMR} coincide with the Lebesgue spaces $\Leb pabE$ defined in Definition \ref{def:Lebesgue-space-Hausd}.

\begin{proposition}\label{prop:Lp-lusin-classic}
If $E$ is a \Frechet space, then $L_{B}^p([a,b],E)\cong\Leb pabE$ as topological
vector spaces, for each $p\in[1,\infty]$. 
\end{proposition}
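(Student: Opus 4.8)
The plan is to show that the identity map on representatives induces a well-defined, linear, bijective homeomorphism between the two quotient spaces. The crucial input is the interplay between Lusin's theorem (Lemma \ref{lem:lusin-then-borel} and Lemma \ref{lem:borel-then-lusin}) and the fact that a \Frechet space $E$, being metrizable, has a countable base of neighbourhoods of $0$; the range $\gamma([a,b])$ of any continuous-on-compacta function need not be separable in general, but here we can exploit that $[a,b]$ is compact and $\sigma$-compact. First I would establish the set-theoretic equality $\mathcal{L}_B^p([a,b],E) = \LLeb pabE$ up to negligible modification. For the inclusion $\mathcal{L}_B^p \subseteq \LLeb p{\cdot}{\cdot}{\cdot}$: a \Frechet space has a countable base, so by Lemma \ref{lem:borel-then-lusin} every Borel measurable $\gamma$ is Lusin measurable; and the seminorm conditions $q\circ\gamma\in\mathcal{L}^p([a,b])$ are literally the same in both definitions (the separability of $\gamma([a,b])$ in Definition \ref{def:Lebesgue-space-classical} is automatic here since a subset of a metrizable separable — indeed second countable — space... actually $E$ itself need not be separable, but this is not needed for the inclusion). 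For the reverse inclusion: given $\gamma\in\LLeb pabE$, since $[a,b]$ is $\sigma$-compact, Lemma \ref{lem:lusin-then-borel} produces a Borel measurable $\bar\gamma$ with $\bar\gamma = \gamma$ a.e.; then $q\circ\bar\gamma = q\circ\gamma$ a.e., so $q\circ\bar\gamma\in\mathcal{L}^p([a,b])$ for every continuous seminorm $q$, and $\bar\gamma([a,b])$ is separable because $\bar\gamma$ takes values in $\gamma(\bigcup_n K_n)\cup\{y_0\}$ where each $\gamma|_{K_n}$ is continuous with compact (hence separable, as $E$ is metrizable) image — so $\bar\gamma([a,b])$ is a countable union of separable sets, hence separable. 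Thus $\bar\gamma\in\mathcal{L}_B^p([a,b],E)$ and $[\gamma] = [\bar\gamma]$.

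Next I would assemble the isomorphism. Define $\Phi\colon L_B^p([a,b],E)\to\Leb pabE$ on equivalence classes: for $[\gamma]_B$ with $\gamma\in\mathcal{L}_B^p$, set $\Phi([\gamma]_B) := [\gamma]$, using that $\mathcal{L}_B^p\subseteq\LLeb pabE$ as established. This is well-defined since the a.e.-equivalence relations coincide, it is clearly $\R$-linear, and it is injective for the same reason. Surjectivity is exactly the reverse inclusion above: any $[\gamma]\in\Leb pabE$ equals $[\bar\gamma] = \Phi([\bar\gamma]_B)$ for the Borel representative $\bar\gamma$. The seminorms match tautologically: $\|[\gamma]_B\|_{L^p,q} = \|q\circ\gamma\|_{\mathcal{L}^p} = \|[\gamma]\|_{L^p,q}$, and both topologies are generated by the same countable family of seminorms $(\|\cdot\|_{L^p,q_n})_{n}$ for a fundamental sequence $(q_n)$ of continuous seminorms on the \Frechet space $E$. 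Hence $\Phi$ is an isometric-type identification on each seminorm and therefore a topological isomorphism.

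I do not expect a serious obstacle here — the proof is essentially bookkeeping, reorganising the two constructions around the already-proven Lusin-type lemmas. The one point requiring a moment's care is the separability clause in Definition \ref{def:Lebesgue-space-classical}: I must confirm that when passing from a Lusin measurable $\gamma$ to its Borel modification $\bar\gamma$, the image $\bar\gamma([a,b])$ is genuinely separable, which is where I use that each $\gamma|_{K_n}$ is continuous on a compact set with values in the metrizable (hence separable-on-compacta) space $E$, together with the $\sigma$-compactness of $[a,b]$ that makes the exceptional set negligible. A secondary subtlety for $p=\infty$ is that in $\mathcal{L}_B^\infty$ one requires $\gamma([a,b])$ bounded rather than an integrability condition; but $q\circ\gamma$ essentially bounded for every continuous seminorm $q$ is precisely the statement that $\gamma([a,b])$ is bounded (modulo a null set, which is absorbed in the quotient), so the same argument goes through verbatim. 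I would state the $p<\infty$ case in detail and remark that $p=\infty$ is analogous.
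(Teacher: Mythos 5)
There is one genuine gap, and it sits exactly at the point you dismiss as unneeded. For the inclusion $\mathcal{L}_B^p([a,b],E)\subseteq\LLeb pabE$ you invoke Lemma \ref{lem:borel-then-lusin} with target $Y=E$, justified by ``a \Frechet space has a countable base''. That lemma requires a countable base \emph{for the topology} of $Y$ (second countability), whereas a \Frechet space, being metrizable, is only guaranteed a countable base of neighbourhoods of $0$ (first countability); a metrizable space is second countable if and only if it is separable, and $E$ is not assumed separable (e.g.\ $\ell^\infty$). So the lemma does not apply to $\gamma\colon[a,b]\to E$ directly. The separability clause in Definition \ref{def:Lebesgue-space-classical} --- which you explicitly set aside as ``not needed for the inclusion'' --- is precisely what rescues the argument: since $\im(\gamma)$ is separable and metrizable, hence second countable, you apply Lemma \ref{lem:borel-then-lusin} to the co-restriction $\gamma|^{\im(\gamma)}\colon[a,b]\to\im(\gamma)$ (which is still Borel measurable, as preimages of relatively open sets are preimages of open sets of $E$), and then compose with the continuous inclusion $\im(\gamma)\hookrightarrow E$ to conclude that $\gamma$ is Lusin measurable. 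This is exactly how the paper's proof proceeds.

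The remainder of your argument --- the reverse inclusion via Lemma \ref{lem:lusin-then-borel}, the separability of $\bar\gamma([a,b])$ as a countable union of compact metrizable images, the identification of the seminorms, and the resulting bicontinuity --- matches the paper and is correct. Your treatment of $p=\infty$ is slightly too casual: $\mathcal{L}_B^\infty$ requires the image of the actual representative to be bounded, so you must excise the null set on which $q\circ\gamma$ exceeds its essential supremum when defining the Borel modification $\bar\gamma$ (setting $\bar\gamma=0$ there), rather than letting the quotient absorb it; this is a one-line repair and the paper does it explicitly.
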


\begin{proof}
Let $[\gamma]\in L_{B}^p([a,b],E)$. Then $\gamma\colon[a,b]\to E$ is Borel measurable
and $\im(\gamma)$ is separable. Then the co-restriction
$\gamma|^{\im(\gamma)}\colon[a,b]\to\im(\gamma)$ is Borel measurable, hence measurable by
Lemma \ref{lem:borel-then-lusin} because the range is separable and metrizable, thus has a countable
base. Then $\gamma\colon[a,b]\to E$ is measurable.
Furthermore, $q\circ\gamma\in\mathcal{L}^p([a,b])$ for
each continuous seminorm $q$ on $E$, hence $[\gamma]\in\Leb pabE$.

To show that the linear injective function
	\begin{align*}
		\Phi\colon L_{B}^p([a,b],E)\to\Leb pabE,\quad
		[\gamma] \mapsto [\gamma].
	\end{align*} 
is an isomorphism of topological vector spaces, it suffices to prove surjectivity, as the
continuity of $\Phi$ and $\Phi^{-1}$ will be obvious then.

Let $[\gamma]\in\Leb pabE$. 
If $p\in[1,\infty[$, then consider a partition $[a,b]=N\cup\bigcup_{n\in\N}K_n$ and 
define $\bar{\gamma}\colon[a,b]\to E$ via
	\begin{align*}
		\bar{\gamma}(t):=\gamma(t) \mbox{, if } t\in\bigcup_{n\in\N}K_n, \quad
		\bar{\gamma}(t):=0 \mbox{, if } t\in N.
	\end{align*}
Then $\bar{\gamma}$ is Borel measurable (see Lemma \ref{lem:lusin-then-borel}), 
the image $\im(\bar{\gamma})$ is separable
and $q\circ\bar{\gamma}\in\mathcal{L}^p([a,b])$, thus $\bar{\gamma}\in\mathcal{L}_{B}^p([a,b],E)$.
If $p=\infty$, then there is some nul set $N^\prime\subseteq[a,b]$ such that $\gamma([a,b]\setminus N^\prime)$
is bounded. Consider a partition $[a,b]=N^\prime\cup N\cup\bigcup_{n\in\N}K_n$ and define
$\bar{\gamma}\colon[a,b]\to E$ via
	\begin{align*}
		\bar{\gamma}(t):=\gamma(t) \mbox{, if } t\in\bigcup_{n\in\N}K_n, \quad
		\bar{\gamma}(t):=0 \mbox{, if } t\in N^\prime\cup N.
	\end{align*}
Again, $\bar{\gamma}$ is Borel measurable and the image $\im(\bar{\gamma})$ is separable
and bounded. Thus $\bar{\gamma}\in\mathcal{L}_{B}^{\infty}([a,b],E)$. In any case, 
$\Phi([\bar{\gamma}]) = [\gamma]$, hence $\Phi$ is surjective.
\end{proof}

\begin{remark}
If $E$ is an (FEP)-space, then also $L_{B}^p([a,b],E)\cong\Leb pabE$ as topological
vector spaces. To see this, we only need to show that every $\gamma\in\mathcal{L}_B^p([a,b],E)$
is measurable, the rest of the proof is identical to the above. 

Since $\im(\gamma)$ is separable, the vector subspace $\overline{\linspan{(\im(\gamma))}}$ is separable
and closed, hence there is an ascending sequence $F_1\subseteq F_2\subseteq\cdots$
of vector subspaces such that
	\begin{align*}
		\overline{\linspan{(\im(\gamma))}} = \bigcup_{n\in\N} F_n
	\end{align*}
and each $F_n$ is a separable \Frechet space (see \cite[Lemma 1.39]{GlMR}).
Consider the sets $B_1:=\gamma^{-1}(F_1)$, $B_n:=\gamma^{-1}(F_n\setminus F_{n-1})$ for
$n\geq 2$. Then $[a,b]$ is a disjoint union of $(B_n)_{n\in\N}$, each $B_n\in\Borel {[a,b]}$
and $\gamma|_{B_n}\colon B_n\to F_n$ is Borel measurable, hence measurable by Lemma
\ref{lem:borel-then-lusin}. Therefore, $\gamma\colon [a,b]\to E$ is measurable.
\end{remark}

\begin{remark}
If $E$ is an arbitrary locally convex space, then $L_{rc}^{\infty}([a,b],E) \subseteq \Leb {\infty}abE$.
Again, it suffices to prove that each $\gamma\in\mathcal{L}_{rc}^{\infty}([a,b],E)$ is
measurable. This is true (by Lemma \ref{lem:borel-then-lusin}), since the closure of the
image of $\gamma$ is compact and metrizable, hence has a countable base.
\end{remark}

\subsection{Mappings between Lebesgue spaces}

The following results can be found in \cite[1.34, 1.35]{GlMR}.

\begin{lemma}\label{lem:cont-linear-Lp}
Let $E$, $F$ be locally convex spaces and $f\colon E\to F$ be continuous and
linear. If $\gamma\in\LLeb pabE$ for $p\in[1,\infty]$, then $f\circ\gamma\in\LLeb pabF$
and the map
	\begin{align*}
		\LLeb pabf\colon\LLeb pabE\to\LLeb pabF,\quad \gamma\mapsto f\circ\gamma
	\end{align*}
is continuous and linear.
\end{lemma}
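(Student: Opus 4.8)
The plan is to verify the three claims of the lemma in the order stated: first that $f\circ\gamma$ is again a measurable function, then that it lies in $\LLeb pabF$ (i.e.\ the integrability of all seminorms), and finally that the induced map is continuous and linear. Linearity is immediate from linearity of $f$ and pointwise definition of the map, so the work is in measurability, integrability, and continuity.

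For measurability: since $\gamma\colon[a,b]\to E$ is measurable and $f\colon E\to F$ is continuous, the composition $f\circ\gamma$ is measurable by Remark~\ref{rem:lusin-cont-comp} (a continuous function precomposed with a Lusin measurable function is Lusin measurable). For integrability: let $q$ be a continuous seminorm on $F$. Then $q\circ f$ is a continuous seminorm on $E$ (it is a seminorm because $f$ is linear, and continuous because $f$ and $q$ are), so $q\circ(f\circ\gamma)=(q\circ f)\circ\gamma\in\mathcal{L}^p([a,b])$ by the defining property of $\gamma\in\LLeb pabE$. Hence $f\circ\gamma\in\LLeb pabF$. Moreover this computation already gives the seminorm estimate $\|f\circ\gamma\|_{\mathcal{L}^p,q}=\|\gamma\|_{\mathcal{L}^p,q\circ f}$.

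For continuity of $\LLeb pabf$: a linear map between locally convex spaces is continuous iff for every continuous seminorm $q$ on the target $\LLeb pabF$ there is a continuous seminorm on $\LLeb pabE$ dominating $q\circ(\LLeb pabf)$. The seminorms $\|\cdot\|_{\mathcal{L}^p,q}$ (for $q$ a continuous seminorm on $F$) generate the topology of $\LLeb pabF$, and by the identity just established, $\|\LLeb pabf(\gamma)\|_{\mathcal{L}^p,q}=\|\gamma\|_{\mathcal{L}^p,q\circ f}$, so each target seminorm pulls back to exactly a generating seminorm on the source. This gives continuity directly.

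I do not expect a genuine obstacle here; this is essentially an unwinding of definitions, and the one point worth stating carefully is that $q\circ f$ is a continuous seminorm on $E$ whenever $q$ is one on $F$ (using linearity of $f$ for the seminorm axioms and continuity of $f$ for continuity). The case distinction $p<\infty$ versus $p=\infty$ requires no separate argument, since in both cases the relevant condition on $\gamma$ is phrased via $q\circ\gamma\in\mathcal{L}^p([a,b])$ and the same substitution $q\rightsquigarrow q\circ f$ works. One may also remark that the map is in fact an isometry in each seminorm pair $(\|\cdot\|_{\mathcal{L}^p,q\circ f},\|\cdot\|_{\mathcal{L}^p,q})$, though only the inequality is needed for continuity.
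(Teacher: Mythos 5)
Your proof is correct and follows essentially the same route as the paper's: measurability via Remark \ref{rem:lusin-cont-comp}, membership in $\LLeb pabF$ from the observation that $q\circ f$ is a continuous seminorm on $E$, and continuity from the identity $\|f\circ\gamma\|_{\mathcal{L}^p,q}=\|\gamma\|_{\mathcal{L}^p,q\circ f}$. No gaps.
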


\begin{proof}
From Remark \ref{rem:lusin-cont-comp} follows that $f\circ\gamma$ is measurable.
Further, for every continuous seminorm $q$ on $F$, the composition $q\circ f$
is a continuous seminorm on $E$, whence $q\circ( f\circ\gamma)\in\mathcal{L}^p([a,b])$.
Therefore $f\circ\gamma\in\LLeb pabF$.

Since
	\begin{align*}
		\|f\circ\gamma\|_{\mathcal{L}^p,q}
		=\|\gamma\|_{\mathcal{L}^p,q\circ f},
	\end{align*}
the linear function $\LLeb pabf$ is continuous.
\end{proof}

\begin{remark}\label{rem:Lp-product}
From Lemma \ref{lem:cont-linear-Lp} we can easily conclude that for locally
convex spaces $E$ and $F$ we have
	\begin{align*}
		\LLeb pab{E\times F} \cong \LLeb pabE \times \LLeb pabF
	\end{align*}
as locally convex spaces. In fact, the function
	\begin{align*}
		\LLeb pab{E\times F} \to \LLeb pabE \times \LLeb pabF,\quad
		\gamma\mapsto (\pr_1\circ\gamma, \pr_2\circ\gamma)
	\end{align*}
is continuous linear (where $\pr_1, \pr_2$ are the projections on the first, resp., second component
of $E\times F$)
and is a linear bijection with the continuous inverse
	\begin{align*}
		\LLeb pabE \times \LLeb pabF \to \LLeb pab{E\times F},\quad
		(\gamma_1,\gamma_2)\mapsto \lambda_1\circ\gamma_1 + \lambda_2\circ\gamma_2,
	\end{align*}
where $\lambda_1\colon E\to E\times F, x\mapsto (x,0)$ and 
$\lambda_2\colon F\to E\times F, y\mapsto (0,y)$ are continuous and linear.
\end{remark}

As in \cite[Remark 3.7]{GlMR}, the following holds:

\begin{lemma}\label{lem:affine-linear-Lp}
Let $E$ be a locally convex space, let $a\leq\alpha<\beta\leq b$ and
	\begin{align*}
		f\colon [c,d]\to[a,b],\quad f(t):=\alpha + \frac{t-c}{d-c}(\beta-\alpha).
	\end{align*}
If $\gamma\in\LLeb pabE$ for $p\in[1,\infty]$, then $\gamma\circ f\in\LLeb pcdE$ and the function
	\begin{align*}
		\mathcal{L}^p(f,E)\colon\LLeb pabE\to\LLeb pcdE,\quad \gamma\mapsto \gamma\circ f
	\end{align*}
is continuous and linear.
\end{lemma}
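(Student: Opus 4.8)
The plan is to reduce Lemma \ref{lem:affine-linear-Lp} to Lemma \ref{lem:lusin-special-cont-comp} for measurability and to a direct change-of-variables computation for the seminorm estimate. First I would observe that $f\colon[c,d]\to[a,b]$ is continuous (indeed affine) with both domain and codomain compact (hence locally compact), so Lemma \ref{lem:lusin-special-cont-comp} applies once we verify its hypothesis on null sets. Since $f$ is an affine bijection onto $[\alpha,\beta]\subseteq[a,b]$ with slope $(\beta-\alpha)/(d-c)>0$, for any $\lambda$-negligible set $N\subseteq[a,b]$ the preimage $f^{-1}(N)=f^{-1}(N\cap[\alpha,\beta])$ is the image of $N\cap[\alpha,\beta]$ under the inverse affine map, which scales Lebesgue measure by the constant factor $(d-c)/(\beta-\alpha)$; hence $\lambda(f^{-1}(N))=0$. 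Therefore $\gamma\circ f$ is measurable whenever $\gamma$ is.

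Next I would establish $\gamma\circ f\in\LLeb pcdE$ together with the seminorm comparison. For a continuous seminorm $q$ on $E$, the composition $q\circ(\gamma\circ f)=(q\circ\gamma)\circ f$, and $q\circ\gamma\in\mathcal{L}^p([a,b])$ by hypothesis. For $p\in[1,\infty[$ the substitution $s=f(t)$, $ds=\frac{\beta-\alpha}{d-c}\,dt$ gives
	\begin{align*}
		\int_c^d q(\gamma(f(t)))^p\,dt
		=\frac{d-c}{\beta-\alpha}\int_\alpha^\beta q(\gamma(s))^p\,ds
		\leq\frac{d-c}{\beta-\alpha}\int_a^b q(\gamma(s))^p\,ds,
	\end{align*}
so $q\circ(\gamma\circ f)\in\mathcal{L}^p([c,d])$ and
	\begin{align*}
		\|\gamma\circ f\|_{\mathcal{L}^p,q}
		\leq\left(\frac{d-c}{\beta-\alpha}\right)^{1/p}\|\gamma\|_{\mathcal{L}^p,q}.
	\end{align*}
For $p=\infty$ the map $f$ being a homeomorphism onto $[\alpha,\beta]$ pushing null sets to null sets (and pulling them back, as above) shows $\esssup_{t\in[c,d]}q(\gamma(f(t)))=\esssup_{s\in[\alpha,\beta]}q(\gamma(s))\leq\esssup_{s\in[a,b]}q(\gamma(s))$, i.e.\ $\|\gamma\circ f\|_{\mathcal{L}^\infty,q}\leq\|\gamma\|_{\mathcal{L}^\infty,q}$. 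In every case $\gamma\circ f\in\LLeb pcdE$.

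Finally, linearity of $\mathcal{L}^p(f,E)\colon\gamma\mapsto\gamma\circ f$ is immediate since precomposition with a fixed map is linear, and the seminorm estimates just displayed show that for each continuous seminorm $q$ on $E$ we have $\|\mathcal{L}^p(f,E)(\gamma)\|_{\mathcal{L}^p,q}\leq C_{p}\|\gamma\|_{\mathcal{L}^p,q}$ with $C_p=\bigl(\tfrac{d-c}{\beta-\alpha}\bigr)^{1/p}$ (and $C_\infty=1$); hence $\mathcal{L}^p(f,E)$ is continuous. I do not anticipate a genuine obstacle here: the only point requiring a little care is confirming the null-set hypothesis of Lemma \ref{lem:lusin-special-cont-comp} in both directions for the $p=\infty$ case, which follows because an affine bijection with nonzero slope multiplies Lebesgue measure by a positive constant, and the short change-of-variables bookkeeping for the finite-$p$ seminorm; both are routine.
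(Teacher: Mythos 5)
Your proof is correct and follows essentially the same route as the paper: reduce measurability of $\gamma\circ f$ to Lemma \ref{lem:lusin-special-cont-comp} and obtain the seminorm bounds by the affine change of variables, with the same constants $\bigl(\tfrac{d-c}{\beta-\alpha}\bigr)^{1/p}$ for $p<\infty$ and $1$ for $p=\infty$. The only cosmetic difference is that you verify the null-set hypothesis of Lemma \ref{lem:lusin-special-cont-comp} directly (an affine bijection with nonzero slope scales Lebesgue measure by a constant), whereas the paper cites an external reference for this fact.
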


\begin{proof}
Note that the composition $\gamma\circ f$ is measurable (by \cite[Theorem 3]{PonomZero},
the function $f$ has the property required in Lemma \ref{lem:lusin-special-cont-comp}).

Assume first $p<\infty$. By \cite[Satz 19.4]{B92}, the function $q^p\circ(\gamma\circ f)$
is $p$-integrable for each continuous seminorm $q$ on $E$, and
	\begin{align}\label{eq:affine-linear-p-integral}
		\int_c^d q(\gamma(f(t)))^p \,dt 
		= \frac{d-c}{\beta-\alpha}\int_{f(c)}^{f(d)} q(\gamma(t))^p \,dt < \infty,
	\end{align}
hence $\gamma\circ f\in\LLeb pcdE$. Furthermore, we see that
	\begin{align}\label{eq:affine-linear-p-seminorm}
		\| \gamma\circ f\|_{\mathcal{L}^p,q} \leq \left(\frac{d-c}{\beta-\alpha}\right)^{\frac{1}{p}} \|\gamma\|_{\mathcal{L}^p,q},
	\end{align}
whence the linear function $\mathcal{L}^p(f,E)$ is continuous.

Now, assume $p=\infty$. Then for every continuous seminorm $q$ on $E$
we have
	\begin{align*}
		\esssup_{t\in[c,d]} q(\gamma(f(t))) \leq \esssup_{t\in[a,b]} q(\gamma(t)) < \infty,
	\end{align*}
that is, $\gamma\circ f\in\LLeb {\infty}cdE$ and
	\begin{align}\label{eq:affine-linear-infty-seminorm}
		\| \gamma\circ f\|_{\mathcal{L}^{\infty},q} \leq \|\gamma\|_{\mathcal{L}^{\infty},q},
	\end{align}
hence the linear map $\mathcal{L}^{\infty}(f,E)$ is continuous and the proof is finished.
\end{proof}

As in \cite[3.15]{GlMR} (called the "locality axiom" there) the following holds:

\begin{lemma}\label{lem:Lp-locality-axiom}
For any $\abpart$, the function
	\begin{align*}
		\Gamma_E\colon \Leb pabE\to \prod_{j=1}^n \Leb p{t_{j-1}}{t_j}E,\quad
		[\gamma]\mapsto \left([\gamma|_{[t_{j-1},t_j]}]\right)_{j=1\ldots,n}
	\end{align*}
is an isomorphism of topological vector spaces.
\end{lemma}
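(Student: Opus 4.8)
The plan is to construct an explicit inverse to $\Gamma_E$ and check that both maps are well-defined, linear and continuous. First I would verify that $\Gamma_E$ is well-defined: if $\gamma\in\LLeb pabE$, then each restriction $\gamma|_{[t_{j-1},t_j]}$ is measurable (a compact subset of $[t_{j-1},t_j]$ is a compact subset of $[a,b]$, so Definition~\ref{def:lusin-meas} applies verbatim) and $q\circ(\gamma|_{[t_{j-1},t_j]}) = (q\circ\gamma)|_{[t_{j-1},t_j]}$ lies in $\mathcal{L}^p([t_{j-1},t_j])$ by the elementary additivity of the integral (or the obvious bound in the case $p=\infty$). If $\gamma(t)=0$ for a.e.\ $t\in[a,b]$, then $\gamma|_{[t_{j-1},t_j]}$ vanishes a.e.\ on each subinterval, so $\Gamma_E$ descends to the quotients and is linear. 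Continuity is immediate from the seminorm estimates $\|\gamma|_{[t_{j-1},t_j]}\|_{\mathcal{L}^p,q} \leq \|\gamma\|_{\mathcal{L}^p,q}$ for every continuous seminorm $q$ on $E$, which hold for all $p\in[1,\infty]$.

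Next I would build the inverse. Given $([\gamma_j])_{j=1,\ldots,n}$ with $\gamma_j\in\LLeb p{t_{j-1}}{t_j}E$, define $\gamma\colon[a,b]\to E$ by $\gamma(t):=\gamma_j(t)$ for $t\in[t_{j-1},t_j[$ (for $j=1,\ldots,n$) and $\gamma(b):=\gamma_n(b)$; the overlaps at the interior points $t_1,\ldots,t_{n-1}$ are resolved by this half-open convention and do not affect the a.e.\ equivalence class. I would check that $\gamma$ is measurable: for a compact $K\subseteq[a,b]$, apply Definition~\ref{def:lusin-meas} to each $K\cap[t_{j-1},t_j]$ to obtain compact sets on which $\gamma_j$ is continuous, then intersect these further with the relatively closed pieces $K\cap[t_{j-1},t_j]$ (minus a finite set of partition points, which are $\lambda$-negligible — here one uses Remark~\ref{rem:lusin-meas-special-case} to absorb the partition points into the null set) to get countably many compact subsets of $K$ on which $\gamma$ restricts continuously and whose complement in $K$ is null. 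That $q\circ\gamma\in\mathcal{L}^p([a,b])$ follows from $\int_a^b q(\gamma(t))^p\,dt = \sum_{j=1}^n \int_{t_{j-1}}^{t_j} q(\gamma_j(t))^p\,dt < \infty$ (and the analogous $\esssup$ statement for $p=\infty$). This construction is plainly linear in $(\gamma_j)_j$, respects the null spaces, and yields a continuous map since $\|\gamma\|_{\mathcal{L}^p,q}^p = \sum_{j=1}^n \|\gamma_j\|_{\mathcal{L}^p,q}^p$ (resp.\ $\|\gamma\|_{\mathcal{L}^\infty,q} = \max_j \|\gamma_j\|_{\mathcal{L}^\infty,q}$), which bounds $\|\gamma\|_{\mathcal{L}^p,q}$ by a constant times the product seminorm.

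Finally I would observe that these two maps are mutually inverse: starting from $[\gamma]$, restricting and then reassembling gives back a function agreeing with $\gamma$ off the finite set $\{t_1,\ldots,t_{n-1}\}$, hence the same class; starting from $([\gamma_j])_j$, reassembling and then restricting to $[t_{j-1},t_j]$ returns $\gamma_j$ up to a value at one endpoint, hence the same class. Since both $\Gamma_E$ and its inverse are linear and continuous, $\Gamma_E$ is an isomorphism of topological vector spaces.

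The main obstacle is the measurability of the reassembled function $\gamma$: one must carefully handle the finitely many partition points where the defining pieces overlap, and confirm that the half-open gluing convention produces a genuinely Lusin measurable function rather than merely one that is measurable on each piece. This is where Remark~\ref{rem:lusin-meas-special-case} (measurability is unaffected by modification on a null set, and the null set may be taken to absorb the partition points) does the real work; everything else is the routine bookkeeping of additivity of integrals and the resulting seminorm identities.
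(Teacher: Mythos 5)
Your proposal is correct and follows essentially the same route as the paper: the forward map is the restriction map (which the paper obtains as a special case of Lemma \ref{lem:affine-linear-Lp}), the inverse is built by the same half-open gluing convention, measurability of the glued function is settled via Remark \ref{rem:lusin-meas-special-case}, and continuity in both directions comes from the additivity of the integral (resp.\ the max of the essential suprema). The only cosmetic differences are that you verify the forward direction by hand rather than citing Lemma \ref{lem:affine-linear-Lp}, and you skip the paper's detour through a Borel representative when computing the norms.
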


\begin{proof}
The function $\Gamma_E$  is defined and
continuous, by Lemma \ref{lem:affine-linear-Lp}. For surjectivity, 
let $([\gamma_1],\ldots,[\gamma_n])\in\prod_{j=1}^n \Leb p{t_{j-1}}{t_j}E$
and define $\gamma\colon [a,b]\to E$ via
	\begin{align*}
		\gamma(t):=\gamma_j(t)\mbox{, if }t\in[t_{j-1},t_j[,\quad
		\gamma(t):=\gamma_n(t)\mbox{, if }t\in[t_{n-1},t_n].
	\end{align*}
The obtained map $\gamma$ is measurable (see Remark \ref{rem:lusin-meas-special-case})
and for Borel measurable $\bar{\gamma}$ as in Lemma \ref{lem:lusin-then-borel} and $p<\infty$ we have
	\begin{align}\label{eq:Lp-locality-integral}
		\int_a^b q(\bar{\gamma}(t))^p \,dt = \sum_{j=1}^n \int_{t_{j-1}}^{t_j} q(\gamma_j(t))^p \,dt<\infty.
	\end{align}
Further, for $p=\infty$ we have
	\begin{align}\label{eq:Lp-locality-ess-sup}
		\esssup_{t\in[a,b]}q(\bar{\gamma}(t)) = \max_{j=1,\ldots,n}\esssup_{t\in[t_{j-1},t_j]}q(\gamma_j(t))<\infty.
	\end{align}
Thus, in any case $[\bar{\gamma}]\in\Leb pabE$. As $\Gamma_E([\bar{\gamma}]) = ([\gamma_1],\ldots,[\gamma_n])$,
the function $\Gamma_E$ is surjective. As it is obviously injective and linear, $\Gamma_E$
is a continuous isomorphism of vector spaces and the continuity of the inverse $\Gamma_E^{-1}$ 
follows easily from Equations (\ref{eq:Lp-locality-integral}), resp., (\ref{eq:Lp-locality-ess-sup}).
\end{proof}

Furthermore, the $\mathcal{L}^p$-spaces have the \emph{subdivision property} (\cite[Lemma 5.26]{GlMR}).

\begin{lemma}\label{lem:Lp-subdivision}
Let $E$ be a locally convex space, let $\gamma\in\LLeb p01E$. For $n\in\N$ and $k\in\{0,\ldots,n-1\}$
define
	\begin{align}\label{eq:subdivision-function}
		\gamma_{n,k}\colon [0,1]\to E,\quad \gamma_{n,k}(t):=\frac{1}{n}\gamma\left(\frac{k+t}{n}\right).
	\end{align}
Then $\gamma_{n,k}\in\LLeb p01E$ for every $n,k$ and
	\begin{align*}
		\lim_{n\to\infty} \max_{k\in\{0,\ldots,n-1\}}\|\gamma_{n,k}\|_{\mathcal{L}^p,q} = 0
	\end{align*}
for each continuous seminorm $q$ on $E$.

More generally, the same holds for $\gamma\in\LLeb pabE$ and 
	\begin{align*}
		\gamma_{n,k}\colon[a,b]\to E,\quad \gamma_{n,k}(t):=\frac{1}{n}\gamma\left(a+\frac{k(b-a)+t-a}{n}\right).
	\end{align*}
\end{lemma}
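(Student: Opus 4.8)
The plan is to reduce everything to the scalar case and then apply the affine change-of-variables lemma together with an elementary estimate. First I would observe that, since $\gamma_{n,k}$ is obtained from $\gamma$ by precomposing with the affine bijection $[0,1]\to[k/n,(k+1)/n]$ (or $[a,b]\to[a+k(b-a)/n,\, a+(k+1)(b-a)/n]$ in the general case) and then multiplying by the constant $1/n$, Lemma \ref{lem:affine-linear-Lp} (for the change of variables) together with Remark \ref{rem:lusin-vector-space} (for the scalar multiplication) immediately gives $\gamma_{n,k}\in\LLeb p01E$ (resp. $\LLeb pabE$) for all $n,k$. So the only real content is the limit statement about the seminorms.

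\textbf{Main step: the seminorm estimate.} For a fixed continuous seminorm $q$ on $E$, set $h:=q\circ\gamma\in\mathcal{L}^p([a,b])$, a nonnegative scalar $p$-integrable (or essentially bounded) function. For $p<\infty$, the change-of-variables formula (\ref{eq:affine-linear-p-integral}) applied to the affine map onto the $k$-th subinterval gives, after accounting for the factor $1/n$ in front,
\begin{align*}
	\|\gamma_{n,k}\|_{\mathcal{L}^p,q}^p
	= \frac{1}{n^p}\cdot\frac{b-a}{n}\cdot\frac{n}{b-a}\int_{a+\frac{k(b-a)}{n}}^{a+\frac{(k+1)(b-a)}{n}} h(t)^p\,dt
	= \frac{1}{n^p}\int_{I_{n,k}} h(t)^p\,dt,
\end{align*}
where $I_{n,k}$ is the $k$-th subinterval; hence $\max_k \|\gamma_{n,k}\|_{\mathcal{L}^p,q}^p \le n^{-p}\int_a^b h(t)^p\,dt$, which tends to $0$. (Here I have to be slightly careful with the Jacobian factor $\tfrac{d-c}{\beta-\alpha}$ from (\ref{eq:affine-linear-p-integral}): with $[c,d]=[0,1]$ and $\beta-\alpha=(b-a)/n$ this equals $n/(b-a)$ in the general case, and $n$ when $[a,b]=[0,1]$, and the bookkeeping must be done consistently — this is the step where a sign/constant slip is most likely.) For $p=\infty$, inequality (\ref{eq:affine-linear-infty-seminorm}) gives $\|\gamma_{n,k}\|_{\mathcal{L}^\infty,q}\le \tfrac1n\esssup_{t\in I_{n,k}} h(t)\le \tfrac1n\|\gamma\|_{\mathcal{L}^\infty,q}\to 0$, uniformly in $k$.

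\textbf{The general $[a,b]$ case and the obstacle.} The general statement for $\gamma\in\LLeb pabE$ follows by the identical computation; alternatively one reduces to $[0,1]$ via Lemma \ref{lem:affine-linear-Lp} applied to the affine bijection $[0,1]\to[a,b]$. I expect the only genuine obstacle to be getting the powers of $n$ exactly right: the function $\gamma_{n,k}$ carries an explicit factor $1/n$, and the affine substitution contributes a further Jacobian factor (a power $1/p$ of $n/(b-a)$ in the seminorm), and for $p<\infty$ these combine to leave an overall $n^{-1}$ decay while for $p=\infty$ the Jacobian is irrelevant and only the explicit $1/n$ survives — so the estimate works out in all cases, but the two regimes must be written separately. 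No deep input beyond Lemma \ref{lem:affine-linear-Lp} and the elementary fact that $\int_a^b h^p$ is finite is needed; in particular $\sigma$-additivity of the integral over the partition into the $I_{n,k}$ is not even required, since bounding each piece by the whole integral already suffices for the limit.
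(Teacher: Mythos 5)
Your reduction to the scalar function $h:=q\circ\gamma$, the membership claim $\gamma_{n,k}\in\LLeb p01E$, and the case $p=\infty$ are all fine. But the constant you yourself flagged as the likely slip is indeed wrong, and the error is not harmless: it conceals the one case where your argument genuinely fails. The change of variables (\ref{eq:affine-linear-p-integral}) with $[c,d]=[0,1]$ and $\beta-\alpha=\nicefrac1n$ contributes a Jacobian factor $\frac{d-c}{\beta-\alpha}=n$ (likewise in the $[a,b]$ case), so the correct identity is
\begin{align*}
\|\gamma_{n,k}\|_{\mathcal{L}^p,q}^p=\frac{1}{n^p}\int_0^1 h\bigl(f_{n,k}(t)\bigr)^p\,dt
=\frac{n}{n^p}\int_{I_{n,k}}h(s)^p\,ds=n^{1-p}\int_{I_{n,k}}h(s)^p\,ds,
\end{align*}
where $I_{n,k}$ is the $k$-th subinterval — not $n^{-p}\int_{I_{n,k}}h^p$ as you wrote. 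Consequently the crude bound by the whole integral gives $\max_k\|\gamma_{n,k}\|_{\mathcal{L}^p,q}\leq n^{\frac1p-1}\|\gamma\|_{\mathcal{L}^p,q}$, which tends to $0$ precisely when $p>1$. For $p=1$ the prefactor is $n^0=1$ and the identity reads $\|\gamma_{n,k}\|_{\mathcal{L}^1,q}=\int_{I_{n,k}}h$, so "bounding each piece by the whole integral" yields only the constant $\|\gamma\|_{\mathcal{L}^1,q}$. Your closing assertion that nothing beyond finiteness of $\int_a^b h^p$ is needed is therefore false at $p=1$, and that case is a genuine gap in the proposal.

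What is missing is exactly the extra ingredient the paper deploys for $p=1$: the (uniform) absolute continuity of the integral of the $L^1$-function $h$, i.e.\ that $\max_k\int_{I_{n,k}}h\to0$ because $\lambda(I_{n,k})=\nicefrac{(b-a)}{n}\to0$ uniformly in $k$. The paper proves this by truncation: choose $m$ with $\int_{\{h>m\}}h<\nicefrac\varepsilon2$ (possible since the sets $\{h>m\}$ decrease to a null set), and observe that on the complement $q(\gamma_{n,k}(t))\leq\nicefrac mn<\nicefrac\varepsilon2$ for $n$ large. With that standard fact inserted, your argument closes for all $p\in[1,\infty]$; otherwise the two regimes are not, as you suggest, merely a matter of bookkeeping — the case $p=1$ requires a qualitatively different estimate.
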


\begin{proof}
The functions $f_{n,k}\colon [0,1]\to[\nicefrac{k}{n},\nicefrac{k+1}{n}], 
f_{n,k}(t):=\nicefrac{k+t}{n}$ are as in Lemma \ref{lem:affine-linear-Lp},
hence $\gamma_{n,k} = \nicefrac{1}{n}(\gamma\circ f_{n,k})\in\LLeb p01E$.

Further, for fixed $n\in\N$ and $p=\infty$ we have
	\begin{align*}
		\|\gamma_{n,k}\|_{\mathcal{L}^{\infty},q} 
		= \frac{1}{n}\|\gamma\circ f_{n,k}\|_{\mathcal{L}^{\infty},q}
		\leq \frac{1}{n}\|\gamma\|_{\mathcal{L}^{\infty},q}
	\end{align*}
for every continuous seminorm $q$ on $E$ and every $k\in\{0,\ldots,n-1\}$, 
by (\ref{eq:affine-linear-infty-seminorm}).
Hence
	\begin{align*}
		\max_{k\in\{0,\ldots,n-1\}}\|\gamma_{n,k}\|_{\mathcal{L}^{\infty},q}
		\leq \frac{1}{n}\|\gamma\|_{\mathcal{L}^{\infty},q} \to 0
	\end{align*}
as $n\to\infty$.

Now, if $2\leq p<\infty$, then for $n\in\N$ and a continuous seminorm $q$ on $E$ we have
	\begin{align*}
		\|\gamma_{n,k}\|_{\mathcal{L}^p,q}
		= \frac{1}{n}\|\gamma\circ f_{n,k}\|_{\mathcal{L}^p,q}
		\leq \frac{n^{\frac{1}{p}}}{n}\|\gamma\|_{\mathcal{L}^p,q}
		=n^{\frac{1}{p}-1}\|\gamma\|_{\mathcal{L}^p,q},
	\end{align*}
for each $k\in\{0,\ldots,n-1\}$, by (\ref{eq:affine-linear-p-seminorm}). Hence
	\begin{align*}
		\max_{k\in\{0,\ldots,n-1\}}\|\gamma_{n,k}\|_{\mathcal{L}^p,q}
		\leq n^{\frac{1}{p}-1}\|\gamma\|_{\mathcal{L}^p,q} \to 0
	\end{align*}
as $n\to\infty$.

Finally, let $p=1$. Fix some $\varepsilon>0$ and a continuous seminorm $q$ on $E$.
Each of the sets
	\begin{align*}
		A_m := \{t\in[a,b] : q(\gamma(t))>m
	\end{align*}
are in $\Borel {[0,1]}$ and
	\begin{align*}
		\lim_{m\to\infty} \int_{A_m} q(\gamma(t)) \, dt
		=\int_{\bigcap_{m\in\N}A_m} q(\gamma(t)) \, dt = 0,
	\end{align*}
because $(A_m)_{m\in\N}$ is a decreasing sequence and $\bigcap_{m\in\N}A_m = \emptyset$.
Therefore, for some $m\in\N$ we have
	\begin{align*}
		\int_{A_m} q(\gamma(t)) \, dt < \frac{\varepsilon}{2}.
	\end{align*}
We fix some $N\in\N$ such that $\nicefrac{m}{N} < \nicefrac{\varepsilon}{2}$ and for every $n\geq N$
we define
	\begin{align*}
		A_{n,k} := \{t\in[0,1] : f_{n,k}(t)\in A_m\}
	\end{align*}
Then
	\begin{align*}
		\int_{A_{n,k}} q(\gamma_{n,k}(t)) \,dt
		= \frac{1}{n} \int_{A_{n,k}} q(\gamma(f_{n,k}(t))) \,dt
		=\int_{f_{n,k}(A_{n,k})} q(\gamma(t)) \,dt,
	\end{align*}
by Equation (\ref{eq:affine-linear-p-integral}). Since
$f_{n,k}(A_{n,k}) = A_m\cap [\nicefrac{k}{n},\nicefrac{k+1}{n}]$, we obtain
	\begin{align*}
		\int_{f_{n,k}(A_{n,k})} q(\gamma(t)) \,dt
		\leq \int_{A_m} q(f(t)) \,dt
		< \frac{\varepsilon}{2},
	\end{align*}
by the choice of $m$.
Further
	\begin{align*}
		\|\gamma_{n,k}\|_{\mathcal{L}^1,q} 
		=\int_0^1 q(\gamma_{n,k}(t)) \,dt
		=\int_{A_{n,k}} q(\gamma_{n,k}(t)) \,dt + \int_{[0,1]\setminus A_{n,k}} q(\gamma_{n,k}(t)) \,dt
		<\varepsilon,
	\end{align*}
because $q(\gamma_{n,k}(t))=\nicefrac{1}{n}q(\gamma(f_{n,k}(t)))\leq \nicefrac{m}{n}<\nicefrac{\varepsilon}{2}$
for $t\in [0,1]\setminus A_{n,k}$.
Consequently,
	\begin{align*}
		\max_{k\in\{0,\ldots,n-1\}} \|\gamma_{n,k}\|_{\mathcal{L}^1,q} <\varepsilon,
	\end{align*}
in other words, $\max_{k\in\{0,\ldots,n-1\}}\|\gamma_{n,k}\|_{\mathcal{L}^1,q} \to 0$ as $n\to\infty$,
as required.
\end{proof}

The following result can be found in \cite[Lemma 2.1]{GlMR} (for suitable vector spaces).

\begin{lemma}\label{lem:linear2-in-Lp}
Let $X$ be a topological space, $U\subseteq X$ be an open subset and $E$, $F$ be locally convex spaces.
Let $f\colon U\times E\to F$ be continuous and linear in the second argument.
If $\eta\in C([a,b],U)$ and $\gamma\in\LLeb pabE$ for $p\in[1,\infty]$, 
then $f\circ (\eta,\gamma)\in\LLeb pabF$.
\end{lemma}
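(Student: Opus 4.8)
The plan is to reduce everything to the two facts already available: that the composition of a measurable map with a continuous map is measurable (Remark \ref{rem:lusin-cont-comp} and Lemma \ref{lem:lusin-special-cont-comp}, via the product description of measurability in Lemma \ref{lem:lusin-into-products}), and that continuity of a bilinear-type map lets us control seminorms on compact sets. First I would show that $f\circ(\eta,\gamma)$ is measurable. The pair map $(\eta,\gamma)\colon[a,b]\to U\times E$ is measurable by Lemma \ref{lem:lusin-into-products}, since $\eta$ is continuous (hence measurable by Remark \ref{rem:lusin-cont-comp}) and $\gamma$ is measurable; as $f\colon U\times E\to F$ is continuous, Remark \ref{rem:lusin-cont-comp} gives that $f\circ(\eta,\gamma)\colon[a,b]\to F$ is measurable.

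It remains to check that $q\circ\bigl(f\circ(\eta,\gamma)\bigr)\in\mathcal{L}^p([a,b])$ for every continuous seminorm $q$ on $F$. The key point is a local estimate: the image $L:=\eta([a,b])\subseteq U$ is compact, so for each $x\in L$ continuity and linearity of $f$ in the second argument give a neighbourhood $U_x\subseteq U$ of $x$ and a continuous seminorm $p_x$ on $E$ with $q(f(u,v))\le p_x(v)$ for all $u\in U_x$, $v\in E$. By compactness finitely many $U_{x_1},\dots,U_{x_m}$ cover $L$; set $\tilde q:=p_{x_1}+\cdots+p_{x_m}$, a continuous seminorm on $E$. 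Then for every $t\in[a,b]$, since $\eta(t)$ lies in some $U_{x_i}$,
\begin{align*}
q\bigl(f(\eta(t),\gamma(t))\bigr)\le p_{x_i}(\gamma(t))\le \tilde q(\gamma(t)).
\end{align*}
Hence the measurable function $q\circ(f\circ(\eta,\gamma))$ is pointwise dominated by $\tilde q\circ\gamma$, which lies in $\mathcal{L}^p([a,b])$ because $\gamma\in\LLeb pabE$; for $p<\infty$ this gives $\int_a^b q(f(\eta(t),\gamma(t)))^p\,dt\le\int_a^b\tilde q(\gamma(t))^p\,dt<\infty$, and for $p=\infty$ it gives $\esssup_{t}q(f(\eta(t),\gamma(t)))\le\|\gamma\|_{\mathcal{L}^\infty,\tilde q}<\infty$. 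Either way $f\circ(\eta,\gamma)\in\LLeb pabF$.

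The only real subtlety, and the step I expect to be the main obstacle, is extracting the local bound $q(f(u,v))\le p_x(v)$ uniformly for $u$ in a neighbourhood of $x$: one must use jointly that $f$ is continuous on $U\times E$ and linear in $v$, so that the continuous function $u\mapsto\|f(u,\cdot)\|$ (in the sense of the seminorm it induces) is locally bounded, and then absorb that bound into the seminorm by scaling. Everything else — measurability of the composite and the passage from a pointwise seminorm estimate to an $\mathcal{L}^p$-estimate — is routine given the results already proved in the excerpt.
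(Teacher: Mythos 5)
Your proof is correct and follows essentially the same strategy as the paper: establish measurability of $f\circ(\eta,\gamma)$ via Lemma \ref{lem:lusin-into-products} and Remark \ref{rem:lusin-cont-comp}, then use compactness together with linearity in the second variable to produce a single continuous seminorm $\tilde q$ on $E$ dominating $q\circ f\circ(\eta,\gamma)$ pointwise by $\tilde q\circ\gamma$, and conclude by integration. The only (cosmetic) difference is how the uniform bound is extracted: the paper applies the Wallace Lemma to $[a,b]\times\{0\}$ inside $h_\eta^{-1}(B_1^q(0))$ for $h_\eta(t,v):=f(\eta(t),v)$ to get one seminorm $\pi$ directly, whereas you take a finite subcover of $\eta([a,b])$ and sum the local seminorms; the scaling argument that turns the neighbourhood containment into the estimate $q(f(u,v))\le p_x(v)$ is identical in both.
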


\begin{proof}
By Lemma \ref{lem:lusin-into-products} and Remark \ref{rem:lusin-cont-comp}, the composition
$f\circ(\eta,\gamma)$ is a measurable function.

Now, consider the continuous function
	\begin{align*}
		h_\eta\colon [a,b]\times E\to F,\quad
		h_\eta(t,v):=f(\eta(t),v).
	\end{align*}
Let $q$ be a continuous seminorm on $F$. Then
$h_\eta([a,b]\times\{0\})=\{0\}\subseteq B_1^q(0)$,
thus $[a,b]\times\{0\}\subseteq V$, where $V:=h_\eta^{-1}(B_1^q(0))$ is an open subset of $[a,b]\times E$. 
Using the Wallace Lemma,
we find an open subset $W\subseteq E$ such that 
$[a,b]\times\{0\}\subseteq [a,b]\times W\subseteq V$.
Then there is a continuous seminorm $\pi$ on $E$ such that
	\begin{align*}
		[a,b]\times\{0\} \subseteq [a,b]\times \overline{B_1^\pi(0)} \subseteq [a,b]\times W
		\subseteq V.
	\end{align*}
	
We show that for each $(t,v)\in[a,b]\times E$ we have $q(h_\eta(t,v))\leq \pi(v)$.
In fact, if $\pi(v)>0$, then (using the linearity of $f$ in $v$) we have 
$(\nicefrac{1}{\pi(v)})q(h_\eta(t,v)) = q(h_\eta(t,(\nicefrac{1}{\pi(v)})v))\leq 1$.
If $\pi(v)=0$, then for each $r>0$ we have $rv\in\overline{B_1^\pi(0)}$, whence
$rq(h_\eta(t,v))=q(h_\eta(t,rv))\leq 1$, that is $q(h_\eta(t,v))\leq\nicefrac{1}{r}$, consequently 
$q(h_\eta(t,v))=0=\pi(v)$.

Now, if $p<\infty$, then
	\begin{align*}
		\int_a^b q(f(\eta(t),\gamma(t)))^p \,dt
		=\int_a^b q(h_\eta(t,\gamma(t)))^p \,dt
		\leq \int_a^b \pi(\gamma(t))^p \,dt
		<\infty,
	\end{align*}
thus $q\circ (f\circ(\eta,\gamma))\in\mathcal{L}^p([a,b])$.

If $p=\infty$, then $q(f(\eta(t),\gamma(t)))\leq\pi(\gamma(t))$, whence 
	\begin{align*}
		\esssup_{t\in[a,b]}(q(f(\eta(t),\gamma(t))))\leq \esssup_{t\in[a,b]}(\pi(\gamma(t)))<\infty,
	\end{align*}
thus
$q\circ (f\circ(\eta,\gamma))\in\mathcal{L}^\infty([a,b])$.
\end{proof}

The following lemma (\cite[Lemma 2.4]{GlMR}) will be used in the proof of Proposition
\ref{prop:double-push-forward-Lp}.

\begin{lemma}\label{lem:triple-push-forward-Lp}
Let $E_1$, $E_2$, $E_3$ and $F$ be locally convex spaces, $U\subseteq E_1$, $V\subseteq E_2$
be open subsets and the function $f\colon U\times V\times E_3\to F$ be a $C^1$-function
and linear in the third argument. Then the function
	\begin{align*}
		\tilde{f}\colon U\times C([a,b],V)\times \Leb pab{E_3} &\to \Leb pabF,\\
		(u,\eta,[\gamma])&\mapsto [f(u,\bullet)\circ (\eta,\gamma)]
	\end{align*}
is continuous.(Here $C([a,b],V)$ is endowed with the topology of uniform convergence.)
\end{lemma}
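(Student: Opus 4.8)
The plan is to reduce the continuity of $\tilde f$ to the continuity of an already-understood push-forward plus the continuity of a multiplication-type map, exploiting the $C^1$-hypothesis on $f$ to linearize in the parameters $u$. First I would fix a point $(u_0,\eta_0,[\gamma_0])\in U\times C([a,b],V)\times\Leb pab{E_3}$ and a continuous seminorm $q$ on $F$, and try to estimate $\|f(u,\bullet)\circ(\eta,\gamma) - f(u_0,\bullet)\circ(\eta_0,\gamma_0)\|_{\mathcal L^p,q}$. Splitting via the triangle inequality into a term with the argument pair changed from $(\eta_0,\gamma_0)$ to $(\eta,\gamma)$ at fixed parameter $u$, and a term with the parameter changed from $u$ to $u_0$ at fixed argument pair: the first term is handled by Lemma \ref{lem:linear2-in-Lp} applied to the partial map $(v,w,x)\mapsto f(u,v,w,x)$ restricted to a suitable neighbourhood, where one gets the domination $q(f(u,\eta(t),\gamma(t))-f(u,\eta_0(t),\gamma_0(t))) \le \pi(\gamma(t)-\gamma_0(t)) + (\text{term from }\eta-\eta_0)$ by the bilinear-type bounds established in that proof; the second term is where the $C^1$-structure enters.

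For the parameter-dependence term I would use that $f$ is $C^1$ and linear in the last slot, so by the mean value theorem (in integral form, along the segment from $u_0$ to $u$ inside a convex neighbourhood) we can write $f(u,\eta_0(t),\gamma_0(t)) - f(u_0,\eta_0(t),\gamma_0(t)) = \int_0^1 df(u_0+s(u-u_0),\eta_0(t),\gamma_0(t); u-u_0,0,0)\,ds$, and the integrand is continuous in $(s,t)$, linear in $\gamma_0(t)$, hence uniformly bounded in $q$-seminorm by $C\cdot\pi'(\gamma_0(t))$ for a continuous seminorm $\pi'$ on $E_3$ and a constant $C=C(u,u_0)\to 0$ as $u\to u_0$ — again invoking the Wallace-lemma argument of Lemma \ref{lem:linear2-in-Lp} on the continuous map $df(\cdot,\cdot,\cdot;\cdot,0,0)$ restricted to a compact-times-neighbourhood domain. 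Integrating the $p$-th power (or taking ess-sup for $p=\infty$) over $[a,b]$ then yields $\|f(u,\bullet)\circ(\eta_0,\gamma_0) - f(u_0,\bullet)\circ(\eta_0,\gamma_0)\|_{\mathcal L^p,q} \le C(u,u_0)\,\|\gamma_0\|_{\mathcal L^p,\pi'}$, which tends to $0$.

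The remaining piece is to control, for fixed $u$ near $u_0$, the dependence on $(\eta,\gamma)$: the $\gamma$-part is bounded by $\|\gamma-\gamma_0\|_{\mathcal L^p,\pi}$ directly from the domination, and the $\eta$-part by $\esssup_{t} q\big(f(u,\eta(t),\gamma(t)) - f(u,\eta_0(t),\gamma(t))\big)$-type quantities; here one uses uniform continuity of $f$ on the compact set $\{u\}\times \eta_0([a,b])^{(\varepsilon)}\times\{0\}$ together with the seminorm domination by some $\pi$ on $E_3$ to bound this by $\varepsilon\cdot\|\gamma\|_{\mathcal L^p,\pi}$ once $\|\eta-\eta_0\|_\infty$ is small, with $\|\gamma\|_{\mathcal L^p,\pi}$ staying bounded on the neighbourhood. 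Assembling the three estimates and letting $(u,\eta,[\gamma])\to(u_0,\eta_0,[\gamma_0])$ gives continuity at the fixed point. \textbf{The main obstacle} I anticipate is the bookkeeping needed to make the seminorm-domination bounds of Lemma \ref{lem:linear2-in-Lp} uniform over a whole neighbourhood of $u_0$ (rather than at a single parameter value) and simultaneously over the perturbed curves $\eta$ near $\eta_0$ — i.e. choosing one continuous seminorm $\pi$ on $E_3$ that dominates $q\circ f(u,v,\bullet)$ for all $(u,v)$ in a fixed compact neighbourhood of $(u_0,\eta_0([a,b]))$; this requires applying the Wallace lemma to the compact set $K\times\{0\}$ with $K$ that whole neighbourhood, which is routine but is the step most easily gotten wrong.
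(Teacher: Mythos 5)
Your overall architecture is the right one and is essentially the paper's: the proof there fixes $(\bar u,\bar\eta,[\bar\gamma])$ and a seminorm $q$ and invokes a single Lipschitz-type estimate (\cite[Lemma 1.61]{GlMR}) valid on a neighbourhood of the compact set $\{\bar u\}\times\bar\eta([a,b])$, namely $q(f(u,v,w)-f(u',v',w'))\le\pi_3(w-w')+\pi(u-u',v-v')\,\pi_3(w')$ with \emph{fixed} seminorms $\pi,\pi_3$; applying this pointwise in $t$ and taking $L^p$-norms gives the continuity in one line. Your treatment of the $u$-dependence --- mean value theorem along the segment plus the Wallace-lemma domination of the map $(y,w)\mapsto df(\cdot\,;y,0,0)$, which is bilinear in $(y,w)$ --- is exactly how such an estimate is proved, so that leg is sound.

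The gap is in the $\eta$-leg. You propose to bound $q\bigl(f(u,\eta(t),\gamma(t))-f(u,\eta_0(t),\gamma(t))\bigr)$ by $\varepsilon\,\pi(\gamma(t))$ using ``uniform continuity of $f$'' on a set of the form $\{u\}\times\eta_0([a,b])^{(\varepsilon)}\times\{0\}$ together with seminorm domination. Uniform continuity near points with third coordinate $0$ (where $f$ vanishes) only yields a bound $q(f(u,v,w))\le C\,\pi(w)$ on the map itself; it does not yield the \emph{difference} estimate $q(f(u,v,w)-f(u,v',w))\le\varepsilon(v,v')\,\pi(w)$ with $\pi$ independent of $\varepsilon$ and $\varepsilon(v,v')\to0$ as $v'\to v$. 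Indeed $\Phi(v,v'):=\sup\{q(f(u,v,w)-f(u,v',w)):\pi(w)\le1\}$ is a supremum of continuous functions, hence only lower semicontinuous, so $\Phi(v,v)=0$ does not force $\Phi<\varepsilon$ on a neighbourhood of the diagonal for a merely continuous $f$ (the familiar failure of strong continuity to imply norm continuity for operator-valued maps). This is exactly where the $C^1$-hypothesis must enter: run the same mean-value/bilinear-domination argument in the $v$-direction as well (or jointly in $(u,v)$, which is what the cited \cite[Lemma 1.61]{GlMR} packages), giving $q(f(u,v,w)-f(u,v',w))\le\pi_2(v-v')\,\pi_3(w)$ with fixed $\pi_2,\pi_3$. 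With that replacement your three-term estimate closes. Note also that the obstacle you single out --- finding one $\pi_3$ dominating $q\circ f(u,v,\bullet)$ for all $(u,v)$ in a compact neighbourhood --- is the routine Wallace step; the uniform difference estimate above is the actual crux.
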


\begin{proof}
Fix some $(\bar{u}, \bar{\eta},[\bar{\gamma}])\in  U\times C([a,b],V)\times \Leb pab{E_3}$
and let $q$ be a continuous seminorm on $F$. The subset 
$K:=\{\bar{u}\}\times\bar{\eta}([a,b])\subseteq U\times V$ is compact, hence from Lemma
\cite[Lemma 1.61]{GlMR} follows that there are seminorms $\pi$ on $E_1\times E_2$
and $\pi_3$ on $E_3$ such that $K + B_1^{\pi}(0)\subseteq U\times V$ and
	\begin{align*}
		q(f(u,v,w)-f(u',v',w'))\leq \pi_3(w-w') + \pi(u-u', v-v')\pi_3(w')
	\end{align*}
for all $(u,v),(u',v')\in K+B_1^{\pi}(0)$, $w,w'\in E_3$.
We may assume that $\pi(x,y) = \max\{\pi_1(x), \pi_2(y)\}$ for some continuous
seminorms $\pi_1$ on $E_1$, $\pi_2$ on $E_2$. Then, setting
	\begin{align*}
		U_0:=B_1^{\pi_1}(\bar{u}),\quad V_0:=\bar{\eta}([a,b])+B_1^{\pi_2}(0),
	\end{align*}
we define an open neighborhood
	\begin{align*}
		\Omega := U_0\times C([a,b],V_0)\times\Leb pab{E_3}
	\end{align*}	 
of $(\bar{u}, \bar{\eta},[\bar{\gamma}])$ and show that if 
$(u,\eta,[\gamma])\to (\bar{u}, \bar{\eta},[\bar{\gamma}])$ in $\Omega$, then
$\tilde{f}(u,\eta,[\gamma])\to\tilde{f}(\bar{u}, \bar{\eta},[\bar{\gamma}])$ in $\Leb pab{E_3}$,
because
	\begin{align*}
		\|\tilde{f}(u,\eta,[\gamma]) &- \tilde{f}(\bar{u}, \bar{\eta},[\bar{\gamma}])\|_{L^p,q}\\
		&\leq \|[\gamma-\bar{\gamma}]\|_{L^p,\pi_3}
		+\max\{\pi_1(u-\bar{u}), \|\eta-\bar{\eta}\|_{\infty,\pi_2}\}\|[\bar{\gamma}]\|_{L^p,\pi_3}\to 0.
	\end{align*}
In other words, $\tilde{f}$ is continuous in $(\bar{u}, \bar{\eta},[\bar{\gamma}])$.
\end{proof}

Before investigating differentiable mappings between Lebesgue spaces,
we recall some details concerning the concept of the differentiability on locally convex spaces.
The concept we work with goes back to Bastiani \cite{Bastiani} and is well known
as  Keller's $C_c^k$-calculus \cite{Keller}. The results below can be found, for example, in
\cite{GlHRestr}, \cite{BGlNDiff} and many others.

\begin{definition}
Let $E$, $F$ be locally convex spaces, let $f\colon U\to F$ be a function 
defined on an open subset $U\subseteq E$. The map $f$ is called a \emph{$C^0$-map} if it is
continuous; it is called a \emph{$C^1$-map} if it is $C^0$ and for every $x\in U$, $y\in E$ the directional derivative
	\begin{align*}
		df(x,y):= \lim_{h\to 0}\frac{f(x+hy(-f(x)}{h}
	\end{align*}
exists in $F$ and the obtained differential
	\begin{align*}
		df\colon U\times E\to F,\quad (x,y)\mapsto df(x,y)
	\end{align*}
is continuous. Further, for $k\geq 2$, a continuous map $f$ is called $C^k$ if
it is $C^1$ and $df$ is $C^{k-1}$. 
Finally, a continuous function $f$ is called a \emph{$C^{\infty}$-map} or \emph{smooth},
if $f$ is $C^k$ for every $k\in\N$.

If $f\colon [a,b]\to F$ is a function defined on an interval $[a,b]\subseteq\R$, then $f$ is called $C^0$ if
it is continuous, and it is called $C^1$ if for every $x\in[a,b]$ the (possibly one-sided) limit
	\begin{align*}
		f^\prime(x):=\lim_{h\to 0}\frac{f(x+h)-f(x)}{h}
	\end{align*}
exists and the obtained derivative
	\begin{align*}
		f^\prime\colon[a,b]\to F
	\end{align*}
is continuous.
\end{definition}

Several properties of differentiable functions will be used repeatedly without further mention.

\begin{remark}
\begin{itemize}
\item[(i)]
Each continuous, linear function is smooth.
\item[(ii)]
The differential $df\colon U\times E\to F$ of a $C^1$-map is linear in the second argument.
\item[(iii)]
If $f\colon U\to F$, $g\colon F\to G$ are $C^k$-maps between locally 
convex spaces, then the composition $g\circ f\colon U\to G$ is $C^k$. Furthermore,
if $g$ is a linear topological embedding
such that $g(F)$ is closed in $G$ and if $g\circ f\colon U\to G$ is $C^k$, then $f$ is $C^k$.
\item[(iv)]
A function $f\colon U\to\prod_{j\in J}F_j$ (where $F_j$ are locally convex spaces) 
is $C^k$ if and only if each of the components
$\pr_j\circ f\colon U\to F_j$ is $C^k$.
\end{itemize}
\end{remark}

\begin{remark}\label{rem:f[1]-map}
For an open subset $U\subseteq E$ and a $C^1$-map $f\colon U\to F$ define the open subset
	\begin{align*}
		U^{[1]}:=\{(x,y,h)\in U\times E\times \R : x+hy\in U\}
	\end{align*}
and the function $f^{[1]}\colon U^{[1]}\to F$ via
	\begin{align*}
		f^{[1]}(x,y,h):=\frac{f(x+hy)-f(x)}{h},\quad\mbox{ for }h\neq 0,
	\end{align*}
and
	\begin{align*}
		f^{[1]}(x,y,0) = df(x,y).
	\end{align*}
Then $f^{[1]}$ is continuous. 

Conversely, if for $f\colon U\to F$ there is a continuous map $f^{[1]}\colon U^{[1]}\to F$
such that $f^{[1]}(x,y,h)=\frac{f(x+hy)-f(x)}{h}$ for $h\neq 0$, then $f$ is a $C^1$-function and
$df(x,y)=f^{[1]}(x,y,0)$.
\end{remark}

Returning to our theory, we show that as in \cite[Proposition 2.3]{GlMR}, the following holds:

\begin{proposition}\label{prop:double-push-forward-Lp}
Let $E_1$, $E_2$, $F$ be locally convex spaces, let $V\subseteq E_1$ be open and the
function $f\colon V\times E_2\to F$ be $C^{k+1}$ for $k\in\N\cup\{0,\infty\}$ 
and linear in the second argument. Then for $p\in[1,\infty]$
the function
	\begin{align*}
		\Theta_f\colon C([a,b],V)\times\Leb pab{E_2}&\to\Leb pabF, \\
		(\eta,[\gamma])&\mapsto [f\circ(\eta,\gamma)]
	\end{align*}
is $C^k$.
\end{proposition}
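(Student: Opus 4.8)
The plan is to reduce everything to the first-order case $k=1$ via induction on $k$ and the $f^{[1]}$-characterisation from Remark \ref{rem:f[1]-map}, then handle $k=1$ by exhibiting a continuous candidate for $\Theta_f^{[1]}$. First I would note that continuity of $\Theta_f$ is already covered by Lemma \ref{lem:triple-push-forward-Lp} (specialised to $E_1=\{0\}$, or rather the $\tilde f$ there with the trivial first factor): the hypothesis that $f$ is $C^{k+1}$ for $k\geq 0$ gives in particular $f\in C^1$, and $\tilde f$ continuous yields $\Theta_f$ continuous, so the case $k=0$ is done. For the inductive step, suppose the proposition holds for $k-1$ (in place of $k$) for all admissible data. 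Given $f\colon V\times E_2\to F$ of class $C^{k+1}$ and linear in the second argument, I would compute the directional derivative of $\Theta_f$ and show it is again of the form covered by the induction hypothesis.

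\textbf{The first-order computation.} Consider the set $(C([a,b],V)\times\Leb pab{E_2})^{[1]}$; a point of it is a triple $(\eta,[\gamma],(\zeta,[\beta]),h)$ with $\eta+h\zeta\in C([a,b],V)$. For $h\neq 0$ one has, pointwise in $t$,
\begin{align*}
  \frac{f(\eta(t)+h\zeta(t),\gamma(t)+h\beta(t))-f(\eta(t),\gamma(t))}{h}
  = f^{[1]}\!\bigl((\eta(t),\gamma(t)),(\zeta(t),\beta(t)),h\bigr),
\end{align*}
where $f^{[1]}$ is the continuous map associated with the $C^1$-map $f$ on $V\times E_2$. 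The key structural observation is that $f^{[1]}\colon (V\times E_2)^{[1]}\to F$, after reorganising the variables, is a $C^k$-map defined on an open subset of $(V\times\R)\times E_2\times E_2\times\R$ which is linear in the pair of $E_2$-variables $(\gamma\text{-slot},\beta\text{-slot})$ jointly — indeed $f$ is linear in its second argument, so $f(x+hy',w+hw')-f(x,w)$ depends linearly on $(w,w')$ for fixed $(x,y',h)$, and $df(x,w)(\,\cdot\,)$ inherits the same linearity. Thus $\Theta_f^{[1]}$, defined by pushing $f^{[1]}$ forward, is (up to the canonical identifications $\Leb pab{E_2\times E_2}\cong\Leb pab{E_2}\times\Leb pab{E_2}$ of Remark \ref{rem:Lp-product} and $C([a,b],V\times\R)\cong C([a,b],V)\times C([a,b],\R)$) an instance of the map $\Theta_g$ for $g:=f^{[1]}$, which is $C^k$ by the induction hypothesis. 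One then checks that $\Theta_f^{[1]}$ is well defined (independence of representatives, using Lemma \ref{lem:lusin-almost-zero}) and that it agrees with the difference quotient of $\Theta_f$ for $h\neq 0$; by Remark \ref{rem:f[1]-map} this shows $\Theta_f$ is $C^1$ with $d\Theta_f=\Theta_f^{[1]}(\,\cdot\,,\,\cdot\,,0)$, and since $\Theta_f^{[1]}$ is $C^k$, $\Theta_f$ is $C^{k+1}$... wait — re-indexing: with $f$ of class $C^{k+1}$, $f^{[1]}$ is of class $C^k$, the induction hypothesis (at level $k-1$, applied to a $C^k$ function) gives $\Theta_{f^{[1]}}$ of class $C^{k-1}$, hence $d\Theta_f$ is $C^{k-1}$ and $\Theta_f$ is $C^k$, as claimed. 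The base case $k=0$ was settled above. For $k=\infty$ one simply applies the finite-$k$ statement for every $k$.

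\textbf{Where the work lies.} The main obstacle is the bookkeeping that shows $f^{[1]}$, as a map in the new variables, genuinely falls under the hypotheses of the proposition at the lower index: one must verify it is $C^k$ on an \emph{open} subset of a product $V'\times E'$ with $V'\subseteq E_1'$ open (here $E_1'=E_1\times\R$, $V'=V\times\R$ after noting $(V\times E_2)^{[1]}$ projects onto an open cylinder in the $(x,h)$-variables — more precisely one restricts to the open set where $x+hy\in V$, which is exactly of the form required, and the target map is defined there) and that it is linear in the remaining $E_2\times E_2$-argument. A secondary, routine obstacle is checking that $\Theta_f$ and $\Theta_f^{[1]}$ respect the equivalence-class structure: for this one uses that if $\gamma=0$ a.e.\ then $f\circ(\eta,\gamma)=0$ a.e.\ (linearity of $f$ in the second slot plus $f(\eta(t),0)=0$), together with Lemma \ref{lem:linear2-in-Lp} to know the pushforwards land in the right $\mathcal L^p$-space in the first place. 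None of these steps requires new ideas beyond the lemmas already assembled; the argument is a faithful transcription of \cite[Proposition 2.3]{GlMR} to the Lusin-measurable setting, with Lemma \ref{lem:triple-push-forward-Lp} supplying the continuity input and Remark \ref{rem:f[1]-map} driving the induction.
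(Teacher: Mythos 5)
Your argument is correct, and it reaches the conclusion by a genuinely different organization of the induction than the paper. The paper descends through the differential: in the induction start it builds an explicit open set $\Omega\subseteq(V\times E_2)^{[1]}$ around $\{0\}\times\eta([a,b])\times\bar{\eta}([a,b])\times\gamma([a,b])\times\bar{\gamma}([a,b])$, applies Lemma \ref{lem:triple-push-forward-Lp} to get continuity of $h\mapsto[f^{[1]}(\bullet,h)\circ(\eta,\gamma,\bar{\eta},\bar{\gamma})]$ and hence existence of the directional derivative $d\Theta_f(\eta,[\gamma],\bar{\eta},[\bar{\gamma}])=[df\circ(\eta,\gamma,\bar{\eta},\bar{\gamma})]$, and then runs the induction by applying the hypothesis to $df\colon V\times E_1\times E_2\times E_2\to F$, which is $C^{k+1}$ and linear in the pair $(x,\bar{x})$, so that $d\Theta_f$ is itself a pushforward of the admissible type. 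You instead descend through $f^{[1]}$: you exhibit $\Theta_f^{[1]}$ as $\Theta_{f^{[1]}}$ precomposed with the smooth reorganization $(\eta,[\gamma],\zeta,[\beta],h)\mapsto\bigl((\eta,\zeta,h),([\gamma],[\beta])\bigr)$ and invoke the converse direction of Remark \ref{rem:f[1]-map}. This buys a cleaner treatment of the existence of the derivative (no separate limit computation, no construction of $\Omega$), but it costs two things the paper's route does not need. First, you use that $f\in C^{k+1}$ implies $f^{[1]}\in C^k$; Remark \ref{rem:f[1]-map} only records continuity of $f^{[1]}$, so this must be imported from the underlying calculus (it is standard, see \cite{BGlNDiff}, but should be cited explicitly). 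Second, the correct ambient space for the reorganized first factor is $E_1\times E_1\times\R$ with open subset $V^{[1]}$, since the constraint $x+hy\in V$ involves $y$; your statement ``$E_1'=E_1\times\R$, $V'=V\times\R$'' is a slip, though your parenthetical correction does land on the right set $(V\times E_2)^{[1]}\cong V^{[1]}\times E_2\times E_2$. With those two points made precise, your verification that $f^{[1]}$ is jointly linear in the two $E_2$-slots (for $h\neq0$ by bilinearity of the difference, for $h=0$ by passing to the limit) is exactly the structural fact both proofs rely on, and the base case via Lemma \ref{lem:triple-push-forward-Lp} is the same in both.
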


\begin{proof}
For $k=0$, the assertion holds by Lemma \ref{lem:triple-push-forward-Lp}.
Further, we may assume $k<\infty$ and proceed by induction.

\emph{Induction start: $k=1$.} The map $\Theta_f$ is continuous by the previous step; we show that
for all $(\eta,[\gamma])\in C([a,b],V)\times\Leb pab{E_2}$ and
$(\bar{\eta},[\bar{\gamma}])\in C([a,b],E_1)\times\Leb pab{E_2}$ the directional
derivative 
	\begin{align*}
		d(\Theta_f)(\eta,[\gamma],\bar{\eta},[\bar{\gamma}]) :=
		\lim_{h\to 0}\frac{\Theta_f(\eta+h\bar{\eta},[\gamma+h\bar{\gamma}])-\Theta_f(\eta,[\gamma])}{h}
	\end{align*} 
exists in $\Leb pabF$ and equals $[df\circ (\eta,\gamma,\bar{\eta},\bar{\gamma})]$.

Given $\eta,[\gamma],\bar{\eta},[\bar{\gamma}]$ as above, we note that $\eta([a,b])$ is a compact subset
of the open subset $V\subseteq E_1$, thus there exists an open $0$-neighborhood $U\subseteq E_1$
such that $\eta([a,b])+U\subseteq V$. Further, there is some balanced $0$-neighborhood $W\subseteq U$ such
that $W+W\subseteq U$. As $\bar{\eta}([a,b])$ is bounded in $E_1$ (being compact), for some $\varepsilon>0$
we have $\bar{\eta}([a,b])\subseteq \nicefrac{1}{\varepsilon}W$. In this manner we obtain an open subset
	\begin{align*}
		\Omega:=]-\varepsilon,\varepsilon[ \times (\eta([a,b])+W) \times \frac{1}{\varepsilon}W \times E_2\times E_2
		\subseteq \R\times V\times E_1\times E_2\times E_2
	\end{align*}
for which holds $]-\varepsilon,\varepsilon[\times\eta([a,b])\times\bar{\eta}([a,b])\times\gamma([a,b])\times\bar{\gamma}([a,b])
\subseteq \Omega$ and for all $(t,w,\bar{w},x,\bar{x})\in \Omega$ we have $(w+t\bar{w},x+t\bar{x})\in V\times E_2$
(that is, $\Omega$ is contained in $(V\times E_2)^{[1]}$ constructed as in Remark \ref{rem:f[1]-map}).

Now, for $f^{[1]}\colon (V\times E_2)^{[1]}\to F$ the function
	\begin{align*}
		\Omega\to F,\quad (t,w,\bar{w},x,\bar{x})\mapsto f^{[1]}(w,x,\bar{w},\bar{x},t)
	\end{align*}	 
is $C^1$ and linear in $(x,\bar{x})$, thus from Lemma \ref{lem:triple-push-forward-Lp} follows
(identifying the space $\Leb pab{E_2\times E_2}$ with $\Leb pab{E_2}\times\Leb pab{E_2}$, see
Remark \ref{rem:Lp-product}) that
	\begin{align*}
		(t,\varphi,\bar{\varphi},[\psi],[\bar{\psi}])\mapsto 
		[f^{[1]}(\bullet,t)\circ(\varphi,\psi,\bar{\varphi},\bar{\psi})]\in \Leb pabF
	\end{align*}
is continuous on
	\begin{align*}
		]-\varepsilon,\varepsilon[
		\times C([a,b],\eta([a,b])+W)
		\times C([a,b],\nicefrac{1}{\varepsilon}W)
		\times\Leb pab{E_2}
		\times\Leb pab{E_2}.
	\end{align*}
Hence
	\begin{align*}
		]-\varepsilon,\varepsilon[\to\Leb pabF,\quad 
		t\mapsto [f^{[1]}(\bullet,t)\circ(\eta,\gamma,\bar{\eta},\bar{\gamma})]
	\end{align*}
is continuous. It follows
	\begin{align*}
		d(\Theta_f)(\eta,[\gamma],\bar{\eta},[\bar{\gamma}])
		&=\lim_{h\to0}\frac{1}{h}\left(\Theta_f(\eta+t\bar{\eta},[\gamma+t\bar{\gamma}])-\Theta_f(\eta,[\gamma])\right)\\
		&=\lim_{h\to0}\frac{1}{t}\left([f\circ(\eta+t\bar{\eta},\gamma+t\bar{\gamma})]-[f\circ(\eta,\gamma)]\right)\\
		&=\lim_{h\to0}[f^{[1]}(\bullet,h)\circ(\eta,\gamma,\bar{\eta},\bar{\gamma})]\\
		&=[f^{[1]}(\bullet,0)\circ(\eta,\gamma,\bar{\eta},\bar{\gamma})]
		=[df\circ(\eta,\gamma,\bar{\eta},\bar{\gamma})]
	\end{align*}
in $\Leb pabF$.

It remains to show that 
	\begin{align*}
		d(\Theta_f)\colon C([a,b],V)\times \Leb pab{E_2}\times C([a,b],E_1)\times \Leb pab{E_2}\to\Leb pabF
	\end{align*}
is continuous.
But as the function
	\begin{align}\label{eq:help-df}
		V\times E_1\times E_2\times E_2\to F,\quad (w,\bar{w},x,\bar{x})\mapsto df(w,x,\bar{w},\bar{x})
	\end{align}
is $C^1$ and linear in $(x,\bar{x})$, by induction start
	\begin{align*}
		C([a,b],V)\times C([a,b],E_1)\times\Leb pab{E_2}\times\Leb pab{E_2}&\to\Leb pabF,\\
		\quad (\varphi,\bar{\varphi},[\psi],[\bar{\psi}])&\mapsto [df\circ(\varphi,\psi,\bar{\varphi},\bar{\psi})]
	\end{align*}
is continuous (we identify the $L^p$-spaces again, as above), 
hence $d(\Theta_f)$ is continuous. Therefore, $\Theta_f$ is $C^1$.

\emph{Induction step:} Now, assume that $f$ is $C^{k+2}$. Then $\Theta_f$ is $C^1$ by induction start and 
$df$ is $C^{k+1}$. Then the map in (\ref{eq:help-df}) is $C^{k+1}$ and linear in $(x,\bar{x})$, hence
by induction hypothesis, the map 
$(\varphi,\bar{\varphi},[\psi],[\bar{\psi}])\mapsto [df\circ(\varphi,\psi,\bar{\varphi},\bar{\psi})]
=d(\Theta_f)(\varphi,[\psi],\bar{\varphi},[\bar{\psi}])$ is $C^k$. Hence $\Theta_f$ is $C^{k+1}$.
\end{proof}

\subsection{Integrable $\mathbf{\mathcal{L}^p}$-functions}
\label{subsec:weak-integrable-functions}

\begin{definition}\label{def:weak-integral}
Let $E$ be a locally convex space and let $\gamma\colon[a,b]\to E$ be such that 
$\alpha\circ\gamma\in\mathcal{L}^1([a,b])$ for every continuous linear form $\alpha\in E'$. 
If there exists some $w\in E$ such that 
	\begin{align*}
		\alpha(w)=\int_a^b \alpha(\gamma(t)) \,dt
	\end{align*}
for every $\alpha$, then $w$ is called the \emph{weak integral of $\gamma$ from $a$ to $b$}, and
we write $\int_a^b \gamma(t) \,dt:=w$. As the continuous linear forms separate the points on $E$,
the weak integral of a function $\gamma$ is unique if it exists.
\end{definition}

\begin{remark}\label{rem:conditions-weak-integral}
Since $|\alpha|$ is a continuous seminorm on $E$ for $\alpha\in E'$, each $\gamma\in\LLeb 1abE$
satisfies the condition $\alpha\circ\gamma\in\mathcal{L}^1([a,b])$. Further, if $\int_a^b \gamma(t) \,dt$
exists in $E$, then for every continuous seminorm $q$ on $E$ we have
	\begin{align*}
		q\left(\int_a^b \gamma(t) \,dt\right) \leq \int_a^b q(\gamma(t)) \,dt.
	\end{align*}
\end{remark}

In \cite[Lemma 1.19, Lemma 1.23 and Lemma 1.43]{GlMR}, the author proves that each
$\gamma\in\mathcal{L}_B^1([a,b],E)$, resp, $\gamma\in\mathcal{L}_{rc}^{\infty}([a,b],E)$
has a weak integral in $E$ for suitable spaces $E$. We show below that sequential completeness
of $E$ suffices for each $\mathcal{L}^1$-function to be weak integrable.

\begin{proposition}\label{prop:l1-function-weak-integral}
Let $E$ be a sequentially complete locally convex space. Then each $\gamma\in\LLeb 1abE$
has a weak integral $\int_a^b \gamma(t) \, dt\in E$.

Further, the function
	\begin{align}\label{eq:weak-integral-function}
		\eta\colon[a,b]\to E,\quad \eta(t):= \int_a^t \gamma(s) \, ds
	\end{align}
is continuous.
\end{proposition}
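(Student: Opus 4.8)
The plan is to reduce the existence of the weak integral to an approximation-by-simple-functions argument, exploiting the Lusin measurability of $\gamma$ together with sequential completeness of $E$, and then to treat continuity of $\eta$ via a dominated-convergence estimate using the continuous seminorms and Remark~\ref{rem:conditions-weak-integral}.

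\textbf{Existence of the weak integral.} Fix $\gamma\in\LLeb 1abE$. Since $[a,b]$ is compact, Remark~\ref{rem:lusin-meas-special-case} gives a pairwise disjoint sequence $(K_n)_{n\in\N}$ of compact subsets of $[a,b]$ with $\gamma|_{K_n}$ continuous for each $n$ and $\lambda([a,b]\setminus\bigcup_{n\in\N}K_n)=0$; write $N:=[a,b]\setminus\bigcup_{n}K_n$. On each $K_n$ the continuous function $\gamma|_{K_n}$ has image contained in the compact, hence separable, set $\gamma(K_n)$, so $\overline{\linspan(\im\gamma)}$ is separable. The idea is to build, for each $m$, a \emph{simple function} $s_m$ adapted to finitely many of the $K_n$ and to a fine measurable partition of each such $K_n$ on which $\gamma$ varies little relative to a countable generating family of seminorms; concretely, using that $\gamma|_{K_n}$ is uniformly continuous one partitions $K_n$ into finitely many Borel pieces of small oscillation and picks a sample point in each. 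Each $s_m$ is a finite $E$-valued simple function, hence its weak integral $\int_a^b s_m\,dt$ exists trivially (it is a finite linear combination of vectors in $E$). One arranges the construction so that $\|\gamma-s_m\|_{\mathcal{L}^1,q}\to 0$ for every continuous seminorm $q$, which is where $q\circ\gamma\in\mathcal{L}^1$ is used (dominated convergence applied to the real-valued functions $q\circ(\gamma-s_m)$, bounded by $q\circ\gamma$ plus controlled error). Then Remark~\ref{rem:conditions-weak-integral} gives $q(\int_a^b s_m\,dt-\int_a^b s_\ell\,dt)\le\|s_m-s_\ell\|_{\mathcal{L}^1,q}\to 0$, so $(\int_a^b s_m\,dt)_m$ is Cauchy in $E$; by sequential completeness it converges to some $w\in E$. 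For $\alpha\in E'$, $|\alpha|$ is a continuous seminorm, so $\alpha(\int_a^b s_m\,dt)=\int_a^b\alpha(s_m(t))\,dt\to\int_a^b\alpha(\gamma(t))\,dt$ while also $\alpha(\int_a^b s_m\,dt)\to\alpha(w)$; hence $\alpha(w)=\int_a^b\alpha(\gamma(t))\,dt$, i.e.\ $w=\int_a^b\gamma(t)\,dt$. The same argument applied to $\gamma\cdot\mathbf 1_{[a,t]}$ (which again lies in $\LLeb 1abE$, being a product with a Borel-measurable, hence by Lemma~\ref{lem:borel-then-lusin} and Remark~\ref{rem:lusin-vector-space} harmless, indicator — or just directly, since restricting the $K_n$ to $[a,t]$ preserves compactness) shows that $\eta(t)=\int_a^t\gamma(s)\,ds$ is well defined for every $t$.

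\textbf{Continuity of $\eta$.} For $s<t$ in $[a,b]$ and a continuous seminorm $q$ on $E$, linearity of the weak integral over the partition $[a,t]=[a,s]\cup[s,t]$ (which follows from Lemma~\ref{lem:Lp-locality-axiom} together with uniqueness of the weak integral, applied to the functionals $\alpha$) gives $\eta(t)-\eta(s)=\int_s^t\gamma(r)\,dr$, so by Remark~\ref{rem:conditions-weak-integral}
\[
q(\eta(t)-\eta(s))\le\int_s^t q(\gamma(r))\,dr .
\]
The right-hand side tends to $0$ as $t\to s$ because $q\circ\gamma\in\mathcal{L}^1([a,b])$ and the integral is absolutely continuous as a function of its endpoints (dominated convergence, or the standard absolute continuity of the Lebesgue integral). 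Since this holds for every continuous seminorm $q$, $\eta$ is continuous at $s$, and $s$ was arbitrary.

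\textbf{Main obstacle.} The delicate point is the construction of the approximating simple functions $s_m$ and the verification that $\|\gamma-s_m\|_{\mathcal{L}^1,q}\to 0$ \emph{simultaneously} for all continuous seminorms $q$, not merely for a countable generating family. This is handled by noting that $\gamma$ takes values in the separable subspace $\overline{\linspan(\im\gamma)}$, on which the topology is generated by countably many seminorms, so it suffices to control countably many $q$'s; a diagonal choice of partitions then yields a single sequence $(s_m)$ working for all of them at once, and for a general continuous seminorm one dominates by one from the countable family up to scaling. Everything else is a routine transcription of the classical Bochner-type argument, the only structural input being sequential completeness (used once, to pass to the limit $w$) and Lusin measurability (used to produce the $K_n$ and the within-$K_n$ continuity that makes the simple-function approximation possible).
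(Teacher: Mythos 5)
Your overall architecture (Cauchy sequence of integrals of simple functions, identification of the limit by applying functionals, continuity of $\eta$ via $q(\eta(t)-\eta(s))\le\int_s^t q(\gamma(r))\,dr$ and absolute continuity of the integral of $q\circ\gamma$) is sound, and the continuity part coincides with the paper's argument. The gap sits exactly where you flag the ``main obstacle'', and your resolution of it fails: it is not true that the induced topology on the separable subspace $\overline{\linspan(\im\gamma)}$ is generated by countably many seminorms. Separability of a subspace of a locally convex space does not imply metrizability (think of an infinite-dimensional separable Banach space with its weak topology, or of $\R^{\R}$); the paper itself, in the proof of Lemma \ref{lem:C1-maps-act-on-AC-vecsp}, only produces a metrizable topology on such a span that is \emph{coarser} than the induced one, which would be useless here because Cauchyness of $\bigl(\int_a^b s_m(t)\,dt\bigr)_m$ must be verified against \emph{all} continuous seminorms of $E$. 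Without a countable cofinal family of seminorms the diagonal choice of partitions is unavailable, and the obstruction is quantitative, not cosmetic: the error of a sample-point simple function on $K_n$ is of order $\lambda(K_n)\,\omega_{q,n}(\mathrm{mesh})$, where $\omega_{q,n}$ is the modulus of continuity of $\gamma|_{K_n}$ with respect to $q$; summed over $n$ this is only controlled by $\sum_n\lambda(K_n)\sup_{K_n}q(\gamma(t))$, which can be infinite even though $\sum_n\int_{K_n}q(\gamma(t))\,dt=\int_a^b q(\gamma(t))\,dt<\infty$. So a single sequence of partitions chosen independently of $q$ need not give $\|\gamma-s_m\|_{\mathcal{L}^1,q}\to0$ for every continuous seminorm $q$.

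The repair is to restructure as the paper does: integrate each compact piece \emph{exactly} first, and only then sum. The paper invokes \cite[3.27 Theorem]{RudinFA} to obtain $w_n:=\int_{K_n}\gamma(t)\,dt\in E$ for each $n$; alternatively, your simple-function idea works if applied to one $K_n$ at a time, since on a single compact $K_n$ the domain-diameter partitions give $\sup_{K_n}q(\gamma-s_{n,j})\to0$ simultaneously for all $q$, so $\bigl(\int_{K_n}s_{n,j}\,dt\bigr)_j$ is Cauchy and sequential completeness yields $w_n$. Once the $w_n$ are exact weak integrals, the estimate $q(w_n)\le\int_{K_n}q(\gamma(t))\,dt$ carries no approximation error, so $\sum_n w_n$ is absolutely convergent, hence convergent by sequential completeness, and is identified with the weak integral by applying functionals --- which is precisely the paper's proof.
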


\begin{proof}
As $\gamma$ is measurable, pick a disjoint sequence $(K_n)_{n\in\N}$ of compact sets
$K_n\subseteq[a,b]$ such that $\gamma|_{K_n}$ is continuous and
$\lambda([a,b]\setminus\bigcup_{n\in\N}K_n)=0$. Then for each $n\in\N$
the weak integral $\int_{K_n}\gamma(t) \, dt$ exists in $E$ (by \cite[3.27 Theorem]{RudinFA}). 
Then
	\begin{align*}
		\sum_{n=1}^\infty q\left(\int_{K_n} \gamma(t) \,dt\right)
		\leq \sum_{n=1}^\infty \int_{K_n} q(\gamma(t)) \,dt = \int_a^b q(\gamma(t)) \,dt < \infty,
	\end{align*}
that is, the series $\sum_{n=1}^\infty \int_{K_n} \gamma(t) \, dt$ is absolutely convergent in $E$,
hence convergent, as $E$ is assumed sequentially complete. 

Next, we see that
$\sum_{n=1}^\infty \int_{K_n} \gamma(t) \, dt = \int_a^b \gamma(t) \, dt$,
because for every continuous linear form $\alpha\in E'$ we have
	\begin{align*}
		\alpha\left(\sum_{n=1}^\infty \int_{K_n} \gamma(t) \, dt\right)
		&=\sum_{n=1}^\infty \alpha\left(\int_{K_n} \gamma(t) \, dt\right)\\
		&=\sum_{n=1}^\infty \int_{K_n} \alpha(\gamma(t)) \,dt
		=\int_a^b  \alpha(\gamma(t)) \,dt.
	\end{align*}
	
To prove the continuity of $\eta$ in every $t\in [a,b]$, let $q$ be a continuous seminorm on $E$ 
and let $\varepsilon>0$. Then there exists some $\delta>0$ such that whenever $|t-r|<\delta$,
we have $\int_r^t q(\gamma(s)) \,ds < \varepsilon$ (follows from the classical Fundamental
Theorem of Calculus, see \cite[VII. 4.14]{Elst}). Therefore
	\begin{align*}
		q(\eta(t)-\eta(r)) 
		&= q\left(\int_a^t \gamma(s) \,ds - \int_a^r \gamma(s) \,ds\right)\\
		&=q\left(\int_r^t \gamma(s) \,ds\right)
		\leq \int_r^t q(\gamma(s)) \,ds < \varepsilon,
	\end{align*}
whence $\eta$ is continuous in $t$.
\end{proof}

The differentiability (almost everywhere) of $\eta$ defined in (\ref{eq:weak-integral-function})
is shown in \cite[Lemma 1.28]{GlMR} in the \Frechet case.
In the next proposition, we get (as in \cite[\S 5]{Bochner}) a similar result for
$E$ merely metrizable.

\begin{proposition}\label{prop:weak-integral-diff}
Let $E$ be a metrizable locally convex space, let $\gamma\in\LLeb 1abE$. If the function
	\begin{align*}
		\eta\colon [a,b]\to E,\quad \eta(t):=\int_a^t \gamma(s) \,ds
	\end{align*}
is everywhere defined, then $\eta$ is continuous and for almost every $t\in[a,b]$ the derivative $\eta^\prime(t)$ exists
and equals $\gamma(t)$. 
\end{proposition}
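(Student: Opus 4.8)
The plan is to reduce the vector-valued statement to the scalar case via the Lebesgue differentiation theorem, handled one continuous seminorm at a time, exploiting metrizability of $E$ to pass from ``for each $q$'' to ``for almost every $t$''. First I would recall that $\eta$ is continuous by Proposition \ref{prop:l1-function-weak-integral} (sequential completeness is implied here, or rather not needed since $\eta$ is assumed everywhere defined), so only the differentiability claim requires work. Fix a countable family $(q_m)_{m\in\N}$ of continuous seminorms defining the topology of $E$. For each $m$, the scalar function $t\mapsto q_m(\gamma(t))$ lies in $\mathcal{L}^1([a,b])$, so by the classical Lebesgue differentiation theorem there is a null set $N_m\subseteq[a,b]$ such that for every $t\notin N_m$,
	\begin{align*}
		\lim_{h\to 0}\frac{1}{h}\int_t^{t+h} q_m(\gamma(s)-\gamma(t))\,ds = 0.
	\end{align*}
The subtlety is that the integrand depends on $t$; the standard way around this is to apply the differentiation theorem to the countably many functions $s\mapsto q_m(\gamma(s)-c)$ as $c$ ranges over a countable dense subset $D$ of $E$ (dense in the separable subspace generated by $\mathrm{im}(\gamma)$ together with $D$; separability can be extracted from measurability since on each compact $K_n$ the image $\gamma(K_n)$ is separable), obtaining a single null set $N$ off which every such Lebesgue point condition holds simultaneously, and then a short $3\varepsilon$-argument handles the value $c=\gamma(t)$ by approximating $\gamma(t)$ by elements of $D$.

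Next I would fix $t\notin N$ and show $\eta'(t)=\gamma(t)$ directly from the definition of the weak integral. For $h\neq 0$ with $t+h\in[a,b]$ and any continuous seminorm $q$ (which we may take to be one of the $q_m$, or dominated by one), using Remark \ref{rem:conditions-weak-integral} applied to the function $s\mapsto\gamma(s)-\gamma(t)$,
	\begin{align*}
		q\!\left(\frac{\eta(t+h)-\eta(t)}{h}-\gamma(t)\right)
		= q\!\left(\frac{1}{h}\int_t^{t+h}(\gamma(s)-\gamma(t))\,ds\right)
		\leq \frac{1}{|h|}\left|\int_t^{t+h} q(\gamma(s)-\gamma(t))\,ds\right|,
	\end{align*}
where one must note that $s\mapsto\gamma(s)-\gamma(t)$ is again in $\LLeb 1abE$ and its weak integral over $[t,t+h]$ exists and equals $\eta(t+h)-\eta(t)-h\gamma(t)$ (linearity of the weak integral, plus the elementary fact that the weak integral of the constant $\gamma(t)$ over an interval of length $h$ is $h\gamma(t)$). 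By the Lebesgue point property at $t$, the right-hand side tends to $0$ as $h\to 0$, for each seminorm $q_m$ and hence (since the $q_m$ define the topology) in $E$. This gives $\eta'(t)=\gamma(t)$ for all $t\notin N$, as desired.

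The main obstacle is the ``$t$-dependent integrand'' issue in the Lebesgue point argument: one cannot simply quote the Lebesgue differentiation theorem for a single $\mathcal{L}^1$-function, since $s\mapsto q_m(\gamma(s)-\gamma(t))$ changes with $t$. Resolving it cleanly requires (a) establishing separability of (a closed subspace containing) $\mathrm{im}(\gamma)$ so that a countable dense set $D$ exists, (b) invoking the scalar differentiation theorem for the countably many functions $s\mapsto q_m(\gamma(s)-c)$, $c\in D$, $m\in\N$, to get one common null set $N$, and (c) the triangle-inequality approximation $q_m(\gamma(s)-\gamma(t))\le q_m(\gamma(s)-c)+q_m(c-\gamma(t))$ with $c\in D$ chosen so that $q_m(c-\gamma(t))$ is small. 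Everything else — continuity of $\eta$, linearity of the weak integral, the seminorm estimate — is routine given the results already established in the excerpt, in particular Proposition \ref{prop:l1-function-weak-integral} and Remark \ref{rem:conditions-weak-integral}.
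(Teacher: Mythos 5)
Your proposal is correct and follows essentially the same route as the paper's own proof: separability of $\im(\gamma)$ extracted from Lusin measurability via the compact sets $K_n$, the classical Lebesgue differentiation theorem applied to the countable family of scalar functions $s\mapsto q_m(\gamma(s)-c)$ with $c$ in a countable dense set, a common null set, and a $3\varepsilon$-argument combined with the seminorm estimate on the weak integral. The only cosmetic difference is that you make the linearity of the weak integral and the reduction $\eta(t+h)-\eta(t)-h\gamma(t)=\int_t^{t+h}(\gamma(s)-\gamma(t))\,ds$ explicit, which the paper leaves implicit.
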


\begin{proof}
Note that the continuity of $\eta$ can be shown as in the proof of Proposition \ref{prop:l1-function-weak-integral}.
We may assume that $\gamma(t)=0$ for each $t\notin \bigcup_{n\in\N} K_n$, where $(K_n)_{n\in\N}$ is a
sequence as in Definition \ref{def:lusin-meas}. Our aim is to show that for almost every $t\in[a,b]$ the difference quotient
	\begin{align*}
		\frac{1}{r}(\eta(t+r)-\eta(t)) 
		= \frac{1}{r}\left(\int_a^{t+r} \gamma(s) \,ds - \int_a^t \gamma(s) \, ds\right)
		= \frac{1}{r}\int_t^{t+r} \gamma(s) \,ds
	\end{align*}
tends to $\gamma(t)$ as $r\to 0$. 
That is, for every $\varepsilon>0$ and continuous seminorm $q_m$ on $E$ we have
	\begin{align}\label{eq:integral-diff-result}
		q_m\left(\frac{1}{r} \int_t^{t+r}\gamma(s)\,ds -\gamma(t)\right) 
		= q_m\left(\frac{1}{r} \int_t^{t+r}\gamma(s) - \gamma(t) \,ds\right) 
		< \varepsilon
	\end{align}
for $r\neq 0$ small enough.

We fix some $\varepsilon>0$ and some continuous seminorm $q_m$.
The set $\gamma([a,b])=\{0\}\cup\bigcup_{n\in\N}\gamma(K_n)\subseteq E$ is separable, say 
$\gamma([a,b]) = \overline{\{a_k : k\in\N\}}$.
Thus for every $t\in[a,b]$ we find some $a_m(t)$ such that
	\begin{align*}
		q_m(\gamma(t)-a_m(t))<\frac{1}{3}\varepsilon,
	\end{align*}
hence for every $r\neq 0$ small enough we have
	\begin{align}\label{eq:integral-diff-first}
		\frac{1}{r}\int_t^{t+r} q_m(\gamma(t)-a_m(t)) \,ds < \frac{1}{3}\varepsilon.
	\end{align}
Furthermore, each of the functions
	\begin{align*}
		h_{k,m}\colon[a,b]\to\R,\quad h_{k,m}(t):=q_m(\gamma(t)-a_k)
	\end{align*}
is in $\mathcal{L}^1([a,b])$, hence by the classical Fundamental Theorem of Calculus (see \cite[VII. 4.14]{Elst})
there exist some sets $N_{k,m}\subseteq [a,b]$ such that $\lambda(N_{k,m})=0$ and for every $t\notin N_{k,m}$
we have
	\begin{align}\label{eq:integral-diff-second}
		\left| \frac{1}{r}\int_t^{t+r} q_m(\gamma(s)-a_k) \,ds - q_m(\gamma(t)-a_k) \right| <\frac{1}{3}\varepsilon.
	\end{align}
for $r\neq0$ small enough.

Consequently, for $t\notin \bigcup_{m\in\N}N_{k,m}$ we have
	\begin{align*}
		\left| \frac{1}{r} \int_t^{t+r} q_m(\gamma(s)-\gamma(t)) \,ds \right| 
		&\leq \left| \frac{1}{r} \int_t^{t+r} q_m(\gamma(s)-\gamma(t)) - q_m(\gamma(s)-a_m(t)) \,ds\right|\\
			& + \left| \frac{1}{r} \int_t^{t+r} q_m(\gamma(s)-a_m(t)) - q_m(\gamma(t)-a_m(t)) \,ds\right|\\
			& + \left| \frac{1}{r} \int_t^{t+r} q_m(\gamma(t)-a_m(t)) \,ds \right|\\
			& < \left| \frac{1}{r} \int_t^{t+r} q_m(\gamma(s)-\gamma(t)) - q_m(\gamma(s)-a_m(t)) \,ds\right|\\
			& +\frac{1}{3}\varepsilon+\frac{1}{3}\varepsilon,
	\end{align*}
using the estimates in (\ref{eq:integral-diff-second}) and (\ref{eq:integral-diff-first}).
Finally,
	\begin{align*}
		&\left| \frac{1}{r} \int_t^{t+r} q_m(\gamma(s)-\gamma(t)) - q_m(\gamma(s)-a_m(t)) \,ds\right|\\
		&\leq \frac{1}{|r|} \int_t^{t+r} |q_m(\gamma(s)-\gamma(t)) - q_m(\gamma(s)-a_m(t))| \,ds\\
		&\leq \frac{1}{|r|} \int_t^{t+r} q_m(\gamma(s)-\gamma(t) - \gamma(s)+a_m(t)) \,ds\\
		&= \frac{1}{|r|} \int_t^{t+r} q_m(\gamma(t) - a_m(t)) \,ds
		<\frac{1}{3}\varepsilon,
	\end{align*}
using (\ref{eq:integral-diff-first}) again.

Altogether, we have
	\begin{align*}
		q_m\left(\frac{1}{r} \int_t^{t+r}\gamma(s) - \gamma(t) \,ds\right) 
		&= \frac{1}{|r|}q_m\left(\int_t^{t+r}\gamma(s) - \gamma(t) \,ds\right)\\
		&\leq \frac{1}{|r|}\int_t^{t+r}q_m(\gamma(s) - \gamma(t)) \,ds\\
		&\leq \left| \frac{1}{r} \int_t^{t+r} q_m(\gamma(s)-\gamma(t)) \,ds \right|
		<\varepsilon
	\end{align*}
by the above. Thus the desired estimate (\ref{eq:integral-diff-result}) holds for each $t\notin \bigcup_{m\in\N}N_{k,m}$
and $\lambda(\bigcup_{k,m\in\N}N_{k,m})=0$, whence the proof is finished.
\end{proof}

Even if $E$ is not metrizable (so that $\eta^\prime$ does not necessarily exist, not even almost everywhere), we show
that (as in \cite{GlMR}) the equivalence class $[\gamma]$ is still uniquely determined by $\eta$.

\begin{lemma}\label{lem:weak-integral-unique}
Let $E$ be a sequentially complete locally convex space and let $\gamma\in\LLeb 1abE$. If 
$\int_a^t \gamma(s) \,ds=0$
for all $t\in [a,b]$, then $\gamma(s)=0$ for almost all $s\in[a,b]$.
\end{lemma}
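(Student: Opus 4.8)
The plan is to reduce the statement to the scalar-valued case by testing with continuous linear functionals, and then to invoke Lemma~\ref{lem:lusin-almost-zero} to pass from ``$\alpha\circ\gamma=0$ almost everywhere for every $\alpha\in E'$'' back to ``$\gamma=0$ almost everywhere''.

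First I would fix a continuous linear functional $\alpha\in E'$. Since $|\alpha|$ is a continuous seminorm on $E$, the composition $\alpha\circ\gamma$ lies in $\mathcal{L}^1([a,b])$ (cf.\ Remark~\ref{rem:conditions-weak-integral}), and its restriction to any subinterval $[a,t]$ is likewise integrable. By hypothesis the weak integral $\int_a^t\gamma(s)\,ds$ exists and equals $0$ for every $t\in[a,b]$, so the defining property of the weak integral (Definition~\ref{def:weak-integral}) gives
\[
\int_a^t\alpha(\gamma(s))\,ds=\alpha\left(\int_a^t\gamma(s)\,ds\right)=\alpha(0)=0
\]
for all $t\in[a,b]$. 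Hence the indefinite integral of the $\mathcal{L}^1([a,b])$-function $\alpha\circ\gamma$ vanishes identically; by the classical Fundamental Theorem of Calculus (see \cite[VII. 4.14]{Elst}), the map $t\mapsto\int_a^t\alpha(\gamma(s))\,ds$ is differentiable almost everywhere with derivative $\alpha(\gamma(t))$ at almost every $t$, so $\alpha(\gamma(t))=0$ for almost every $t\in[a,b]$.

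Since $\alpha$ was arbitrary, condition~(ii) of Lemma~\ref{lem:lusin-almost-zero} is met, and that lemma then delivers $\gamma(t)=0$ for almost every $t\in[a,b]$, which is the claim. I expect the only genuinely delicate point to be that the null set on whose complement $\alpha\circ\gamma$ vanishes depends on $\alpha$, while $E'$ may be uncountable, so one cannot simply form a union over all $\alpha$; this is precisely the difficulty that Lemma~\ref{lem:lusin-almost-zero} bypasses by exploiting the Lusin measurability of $\gamma$, and it is the only place in the argument where more than the one-dimensional theory is used.
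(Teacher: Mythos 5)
Your argument is correct and coincides with the paper's own proof: fix $\alpha\in E'$, use the defining property of the weak integral to see that the indefinite integral of $\alpha\circ\gamma$ vanishes, apply the classical Fundamental Theorem of Calculus to get $\alpha\circ\gamma=0$ almost everywhere, and then invoke Lemma~\ref{lem:lusin-almost-zero} to conclude. Your closing remark correctly pinpoints why Lemma~\ref{lem:lusin-almost-zero} is the essential ingredient that absorbs the $\alpha$-dependence of the null sets.
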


\begin{proof}
Let $\alpha$ be a continuous linear functional on $E$. Then we have
	\begin{align*}
		\int_a^t (\alpha\circ\gamma)(s) \,ds = \alpha\left(\int_a^t \gamma(s) \,ds\right) = 0
	\end{align*}	 
for every $t\in[a,b]$. From the Fundamental Theorem of Calculus (see \cite[VII. 4.14]{Elst})
follows that $(\alpha\circ\gamma)(t)=0$ for almost every $t\in[a,b]$. As $\alpha\in E'$ was arbitrary,
from Lemma \ref{lem:lusin-almost-zero} follows that $\gamma(t)=0$ for almost every $t$.
\end{proof}

\section{\emph{AC}-functions} 

\subsection{Vector-valued \emph{AC}-functions}

The vector spaces $\ACp abE$ are defined similarly to \cite[Definition 3.6]{GlMR}.

\begin{definition}\label{def:abs-cont-vector-space}
Let $E$ be a sequentially complete locally convex space. For $p\in[1,\infty]$ we denote by
$\ACp abE$ the vector space of continuous functions $\eta\colon[a,b]\to E$ such that
for some $\gamma\in\LLeb pabE$ we have 
	\begin{align*}
		\eta(t)=\eta(a)+\int_a^t \gamma(s) \, ds\quad\mbox{ for all }t\in[a,b].
	\end{align*}
As $\eta$ uniquely determines $[\gamma]$ (see Lemma \ref{lem:weak-integral-unique}),
we write $\eta^\prime:=[\gamma]$.

The function
	\begin{align}
		\Phi\colon \ACp abE \to E\times\Leb pabE,\quad
		\eta\mapsto (\eta(a),\eta^\prime)
	\end{align}
is an isomorphism of vector spaces and we endow $\ACp abE$ with the locally convex
topology making $\Phi$ a homeomorphism.

In other words, the topology on $\ACp abE$ is defined by seminorms
	\begin{align}\label{eq:AC-seminorms}
		\eta\mapsto q(\eta(a)),\quad \eta\mapsto \|\eta^\prime\|_{L^p,q}
	\end{align}
for continuous seminorms $q$ on $E$.
\end{definition}

\begin{remark}\label{rem:AC-vecsp-inclusions}
As in \cite[Remark 3.7]{GlMR}, we mention that
from Remark \ref{rem:Lp-inclusions} readily follows that for $1\leq p\leq r\leq\infty$ we have
	\begin{align*}
		AC_{L^{\infty}}([a,b],E)
		\subseteq AC_{L^r}([a,b],E)
		\subseteq AC_{L^p}([a,b],E)
		\subseteq AC_{L^1}([a,b],E),
	\end{align*}
with continuous inclusion maps.
\end{remark}

Also the following inclusion map is continuous (\cite[Lemma 3.9]{GlMR}).

\begin{lemma}\label{lem:AC-vecsp-top-embedding}
Let $E$ is a sequentially complete locally convex space and endow the vector space $C([a,b],E)$
with the topology of uniform convergence. 
Then for $p\in[1,\infty]$ the inclusion map
	\begin{align*}
		\incl\colon \ACp abE \to C([a,b],E),\quad \eta\mapsto \eta
	\end{align*}
is continuous.
\end{lemma}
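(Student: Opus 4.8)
The plan is to show that the seminorms defining the topology of uniform convergence on $C([a,b],E)$ pull back to continuous seminorms on $\ACp abE$; by linearity of $\incl$ this suffices for continuity. So let $q$ be a continuous seminorm on $E$; I must bound $\|\eta\|_{\infty,q} = \sup_{t\in[a,b]} q(\eta(t))$ by a continuous seminorm in the family \eqref{eq:AC-seminorms}, i.e.\ by some expression in $q(\eta(a))$ and $\|\eta'\|_{L^p,q}$.

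First I would write, for $\eta\in\ACp abE$ with $\eta' = [\gamma]$,
\begin{align*}
	q(\eta(t)) = q\Bigl(\eta(a) + \int_a^t \gamma(s)\,ds\Bigr)
	\leq q(\eta(a)) + q\Bigl(\int_a^t \gamma(s)\,ds\Bigr)
	\leq q(\eta(a)) + \int_a^t q(\gamma(s))\,ds,
\end{align*}
using the triangle inequality and Remark \ref{rem:conditions-weak-integral}. The last integral is at most $\int_a^b q(\gamma(s))\,ds = \|\gamma\|_{\mathcal{L}^1,q}$. If $p=1$ this already gives $\|\eta\|_{\infty,q}\leq q(\eta(a)) + \|\eta'\|_{L^1,q}$. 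For general $p\in[1,\infty]$ I would invoke the continuous inclusion $\LLeb pabE\hookrightarrow\LLeb 1abE$ from Remark \ref{rem:Lp-inclusions}, which gives $\|\gamma\|_{\mathcal{L}^1,q}\leq (b-a)^{1-1/p}\|\gamma\|_{\mathcal{L}^p,q}$; hence
\begin{align*}
	\|\eta\|_{\infty,q} \leq q(\eta(a)) + (b-a)^{1-\frac1p}\,\|\eta'\|_{L^p,q}.
\end{align*}

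The right-hand side is a sum of (constant multiples of) the seminorms in \eqref{eq:AC-seminorms}, so each seminorm $\|\cdot\|_{\infty,q}$ on $C([a,b],E)$ is dominated, after composition with $\incl$, by a continuous seminorm on $\ACp abE$. Since $\incl$ is linear and the topology on $C([a,b],E)$ is generated by the $\|\cdot\|_{\infty,q}$, this proves that $\incl$ is continuous. There is no real obstacle here; the only mild point to be careful about is the passage between the $L^p$- and $\mathcal{L}^p$-level (choosing a representative $\gamma$ of $\eta'$), but the estimate is independent of the representative since it only involves $q\circ\gamma$ up to a null set, and it is exactly the defining seminorm $\|\eta'\|_{L^p,q}$.
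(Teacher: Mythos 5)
Your proposal is correct and follows essentially the same route as the paper's proof: both use the triangle inequality together with the estimate $q\left(\int_a^t \gamma(s)\,ds\right)\leq\int_a^t q(\gamma(s))\,ds$ from Remark \ref{rem:conditions-weak-integral} and the inclusion bound $\|\gamma\|_{\mathcal{L}^1,q}\leq (b-a)^{1-\frac{1}{p}}\|\gamma\|_{\mathcal{L}^p,q}$ from Remark \ref{rem:Lp-inclusions} to obtain exactly the inequality $\|\eta\|_{\infty,q}\leq q(\eta(a))+(b-a)^{1-\frac{1}{p}}\|\eta^\prime\|_{L^p,q}$. Your remark about independence of the choice of representative is a fine (if minor) point that the paper leaves implicit.
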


\begin{proof}
Let $\eta\in\ACp abE$ and denote $\eta^\prime = [\gamma]$. For a continuous seminorm $q$ on $E$
and $t\in[a,b]$ we have
	\begin{align*}
		q(\eta(t))
		&=q\left(\eta(t_0)+\int_{t_0}^t \gamma(s) \,ds\right)
		\leq q(\eta(t_0)) + q\left(\int_{t_0}^t \gamma(s) \,ds\right)\\
		&\leq q(\eta(t_0)) + \int_{t_0}^t q(\gamma(s)) \,ds
		=q(\eta(t_0)) + \|\gamma\|_{\mathcal{L}^1,q}\\
		&\leq q(\eta(t_0)) + (b-a)^{1-\frac{1}{p}}\|\gamma\|_{\mathcal{L}^p,q}
		=q(\eta(t_0)) + (b-a)^{1-\frac{1}{p}}\|\eta^\prime\|_{L^p,q},
	\end{align*}
see Remark \ref{rem:Lp-inclusions}. Thus
	\begin{align*}
		\|\eta\|_{\infty,q} \leq q(\eta(t_0)) + (b-a)^{1-\frac{1}{p}}\|\eta^\prime\|_{L^p,q},
	\end{align*}
whence the (linear) inclusion map is continuous (recall that the topology on $C([a,b],E)$ is defined
by the family of seminorms $\|\eta\|_{\infty,q}:=\sup_{t\in[a,b]}q(\eta(t))$ with continuous seminorms
$q$ on $E$).
\end{proof}

\begin{remark}\label{rem:open-in-AC-vecsp}
For a subset $V\subseteq E$, denote by $\ACp abV$ the set of functions $\eta\in\ACp abE$
such that $\eta([a,b])\subseteq V$. Then, as a consequence of Lemma \ref{lem:AC-vecsp-top-embedding},
$\ACp abV = \incl^{-1}(C([a,b],V))$ is open in
$\ACp abE$ if $V$ is open in $E$.
\end{remark}

\begin{remark}\label{rem:eval-smooth-AC-vecsp}
It is well known that the evaluation map 
$C([a,b],E)\to E, \eta\mapsto \eta(\alpha)$ is continuous linear for $\alpha\in[a,b]$. By 
Lemma
\ref{lem:AC-vecsp-top-embedding}, so is the inclusion map $\incl\colon\ACp abE\to C([a,b],E)$, hence
the evaluation map
	\begin{align*}
		\ev_{\alpha}\colon \ACp abE\to E,\quad
	 	\eta\mapsto \eta(\alpha)
	\end{align*}
is continuous, linear.
\end{remark}

The next lemma (which is not difficult to prove) will be used in the proof of Lemma
\ref{lem:C1-maps-act-on-AC-vecsp} (which corresponds to \cite[Lemma 3.18]{GlMR}).

\begin{lemma}\label{lem:compact-continuity}
Let $E$ be a locally convex space, $U\subseteq E$ be an open subset and
$f\colon U\to \R$ be continuous. Then for every compact subset $K\subseteq U$
and every $\varepsilon>0$ there exists a continuous seminorm $q$ on $E$ such that
$K+B_1^q(0)\subseteq U$ and
	\begin{align*}
		|f(x)-f(y)| < \varepsilon\quad\mbox{ for }x\in K, y\in B_1^q(x).
	\end{align*}
\end{lemma}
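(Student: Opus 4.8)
The statement is a local, uniform-continuity fact: a continuous real-valued function on an open set $U \subseteq E$ is uniformly continuous near a compact set $K \subseteq U$, and moreover there is a single continuous seminorm $q$ whose unit ball simultaneously witnesses both ``$K + B_1^q(0) \subseteq U$'' and the $\varepsilon$-continuity estimate. The plan is a standard tube-lemma / finite-subcover argument. First I would use continuity of $f$ at each point $x \in K$ together with the fact that $\{B_r^{q'}(x) : q' \text{ a continuous seminorm on } E, \ r > 0\}$ is a neighbourhood basis at $x$: for each $x \in K$ pick a continuous seminorm $p_x$ and radius $r_x > 0$ with $x + B_{2r_x}^{p_x}(0) \subseteq U$ and $|f(z) - f(x)| < \varepsilon/2$ for all $z \in x + B_{2r_x}^{p_x}(0)$.

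Next I would extract a finite subcover: the sets $x + B_{r_x}^{p_x}(0)$, $x \in K$, cover the compact set $K$, so finitely many $x_1, \dots, x_N$ suffice. Then set $q := \max\{ r_{x_1}^{-1} p_{x_1}, \dots, r_{x_N}^{-1} p_{x_N} \}$, which is again a continuous seminorm on $E$; note $B_1^q(0) \subseteq B_{r_{x_i}}^{p_{x_i}}(0)$ for every $i$. Now I would verify the two required properties. For the inclusion $K + B_1^q(0) \subseteq U$: given $x \in K$, choose $i$ with $x \in x_i + B_{r_{x_i}}^{p_{x_i}}(0)$; then for $v \in B_1^q(0)$ we have $p_{x_i}(v) < r_{x_i}$, so $p_{x_i}((x + v) - x_i) \leq p_{x_i}(x - x_i) + p_{x_i}(v) < 2 r_{x_i}$, hence $x + v \in x_i + B_{2 r_{x_i}}^{p_{x_i}}(0) \subseteq U$. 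For the continuity estimate: given $x \in K$ and $y \in B_1^q(x)$, pick the same index $i$; then both $x$ and $y$ lie in $x_i + B_{2 r_{x_i}}^{p_{x_i}}(0)$ (for $x$ this was just shown with $v = 0$; for $y$, $p_{x_i}(y - x_i) \leq p_{x_i}(y - x) + p_{x_i}(x - x_i) < r_{x_i} + r_{x_i}$), so $|f(x) - f(y)| \leq |f(x) - f(x_i)| + |f(x_i) - f(y)| < \varepsilon/2 + \varepsilon/2 = \varepsilon$.

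The argument is entirely routine; there is no real obstacle. The only point requiring a little care is the bookkeeping with radii: one needs the ``doubled'' ball $x_i + B_{2r_{x_i}}^{p_{x_i}}(0)$ in the original choice so that points at $p_{x_i}$-distance up to $r_{x_i}$ from $x_i$ (the ones picked up by the subcover) can still be perturbed by another $B_1^q$-ball and stay inside $U$ and inside the $\varepsilon/2$-controlled region. Taking $q$ as the (finite) maximum of rescaled seminorms is what produces a single seminorm controlling all finitely many local estimates at once; that $\max$ of finitely many continuous seminorms is a continuous seminorm is standard and needs no comment.
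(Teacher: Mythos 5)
Your proof is correct. The paper actually states this lemma without any proof, remarking only that it ``is not difficult to prove'', so there is no argument of the author's to compare against; your finite-subcover argument -- choosing at each $x\in K$ a seminorm ball of doubled radius on which $f$ varies by less than $\varepsilon/2$, passing to a finite subcover, and taking $q=\max_i r_{x_i}^{-1}p_{x_i}$ -- is precisely the standard argument the author presumably had in mind, and the bookkeeping with the doubled radii and the triangle inequality is handled correctly.
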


\begin{lemma}\label{lem:C1-maps-act-on-AC-vecsp}
Let $E$, $F$ be sequentially complete locally convex spaces, $V\subseteq E$ be an open subset
and $p\in[1,\infty]$.
If $f\colon V\to F$ is a $C^1$-function then
	\begin{align*}
		f\circ\eta\in\ACp abF
	\end{align*}
for every $\eta\in\ACp abV$.
\end{lemma}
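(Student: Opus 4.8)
The plan is to show that $f\circ\eta$ is continuous (clear, since $f$ and $\eta$ are continuous) and then exhibit an $L^p$-integrable ``derivative''. The natural candidate is the function $t\mapsto df(\eta(t),\eta'(t))$, or more precisely, since $\eta'=[\gamma]$ is only an equivalence class, the map $t\mapsto df(\eta(t),\gamma(t))$ for a representative $\gamma\in\LLeb pabE$ of $\eta'$. The first step is to verify this is a well-defined element of $\LLeb pabF$: measurability follows from Lemma~\ref{lem:lusin-into-products} and Remark~\ref{rem:lusin-cont-comp} (the pair $(\eta,\gamma)$ is measurable and $df$ is continuous), and the seminorm estimates $q\circ df(\eta,\gamma)\in\mathcal{L}^p([a,b])$ follow from Lemma~\ref{lem:linear2-in-Lp} applied to $df\colon V\times E\to F$, which is continuous and linear in the second argument, with $\eta\in C([a,b],V)$ and $\gamma\in\LLeb pabE$.

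The heart of the proof is then to establish the integral identity
\[
	f(\eta(t)) = f(\eta(a)) + \int_a^t df(\eta(s),\gamma(s))\,ds \quad\text{for all }t\in[a,b],
\]
where the right-hand integral exists as a weak integral by Proposition~\ref{prop:l1-function-weak-integral} (since $F$ is sequentially complete and $df\circ(\eta,\gamma)\in\LLeb 1abF$). To prove this it suffices, since the continuous linear functionals on $F$ separate points and commute with weak integrals, to check it after applying an arbitrary $\alpha\in F'$. This reduces the vector-valued statement to the scalar statement
\[
	\alpha(f(\eta(t))) = \alpha(f(\eta(a))) + \int_a^t \alpha\bigl(df(\eta(s),\gamma(s))\bigr)\,ds,
\]
i.e.\ to showing that the real-valued function $g:=\alpha\circ f\circ\eta$ is absolutely continuous with $g'(s) = \alpha(df(\eta(s),\gamma(s))) = d(\alpha\circ f)(\eta(s),\gamma(s))$ for almost every $s$. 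Here I would use that $\eta$, being in $\ACp abE$, can be written as $\eta(a)+\int_a^\cdot\gamma$, and apply a chain-rule argument for the $C^1$-scalar function $\alpha\circ f$ composed with the absolutely continuous curve $\eta$; the key analytic input is that an absolutely continuous real curve into a locally convex space composed with a $C^1$ map remains absolutely continuous with the expected derivative almost everywhere. One clean way: localize on a compact subinterval, approximate, and use the fundamental theorem of calculus (\cite[VII.4.14]{Elst}) together with the mean-value estimate $f^{[1]}$ from Remark~\ref{rem:f[1]-map} to control difference quotients.

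The main obstacle I anticipate is precisely this chain-rule/absolute-continuity step: one must pass from the differentiability of $f$ in the Bastiani sense to an almost-everywhere differentiation statement for the composite along $\eta$, where $\eta'$ exists only as an $L^p$-class (and $\eta$ itself need not be differentiable a.e.\ when $E$ is non-metrizable, per the discussion before Lemma~\ref{lem:weak-integral-unique}). The way around the non-metrizability issue is to do everything after composing with $\alpha\in F'$, which lands in $\R$; but even then one is composing $\alpha\circ f$ with a vector-valued absolutely continuous curve, so the argument should be run carefully, e.g.\ by first reducing (via Lemma~\ref{lem:Lp-locality-axiom} and the locality of the statement) to small intervals on which $\eta$ stays in a neighbourhood where $f^{[1]}$ gives uniform control, writing the difference quotient of $g$ as an integral average of $d(\alpha\circ f)$ evaluated along a ``secant'' direction, and letting the step tend to zero using the continuity of $f^{[1]}$ and dominated convergence. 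Once the integral identity is in hand, $f\circ\eta\in\ACp abF$ follows immediately from the definition, with $(f\circ\eta)' = [df\circ(\eta,\gamma)]$.
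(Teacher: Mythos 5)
Your overall architecture matches the paper's: the candidate derivative is $df\circ(\eta,\gamma)\in\mathcal{L}^p([a,b],F)$ (well-defined by Lemma~\ref{lem:linear2-in-Lp}), one forms $\zeta(t)=f(\eta(a))+\int_a^t df(\eta(s),\gamma(s))\,ds$, and one reduces to showing, for each $\alpha\in F'$, that $\alpha\circ f\circ\eta$ is absolutely continuous with a.e.\ derivative $\alpha(df(\eta(t),\gamma(t)))$; the absolute continuity itself follows from a local Lipschitz estimate for $f$ on a neighbourhood of the compact set $\eta([a,b])$ together with the absolute continuity of $t\mapsto\int_a^t q(\gamma(s))\,ds$. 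Up to that point your plan is sound.

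The genuine gap is in the step you yourself flag: identifying the a.e.\ derivative $\varphi$ of the scalar composite with $df(\eta(t),\gamma(t))$. Your proposed remedy --- ``do everything after composing with $\alpha\in F'$, which lands in $\R$'' --- does not touch the actual obstruction, because the non-metrizability problem sits in the \emph{domain} space $E$, not in the target. Any argument via $f^{[1]}$ (or via an integral average of $d(\alpha\circ f)$ along a secant) must evaluate $f^{[1]}$ at $\bigl(\eta(t),\tfrac{\eta(t+h)-\eta(t)}{h},h\bigr)$ and needs the difference quotients $\tfrac{\eta(t+h)-\eta(t)}{h}$, which live in $E$, to converge to $\gamma(t)$ for almost every $t$; this is exactly the a.e.\ differentiability of $\eta$, which by the discussion preceding Lemma~\ref{lem:weak-integral-unique} is only available when the topology is metrizable (Proposition~\ref{prop:weak-integral-diff}). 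Composing with $\alpha$ afterwards cannot create this convergence. The paper closes the gap by a construction you do not have: it restricts attention to the $\sigma$-compact subspace $X$ spanned by $\eta([a,b])$ and the sets $\gamma(K_n)$, builds a \emph{coarser metrizable} locally convex topology on $X$ (using countably many seminorms chosen so that $f^{[1]}$ remains continuous on $(V\cap X)^{[1]}$ and so that $\eta$ is still $AC_{L^p}$ with derivative $[\gamma]$ in the new topology --- the latter checked by extending functionals from $(X,\mathcal{T})$ to $E$), and only then invokes Proposition~\ref{prop:weak-integral-diff} to get $\eta'(t)=\gamma(t)$ a.e.\ and hence the chain rule $\varphi(t)=df(\eta(t),\gamma(t))$. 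Without this metrization step (or a substitute such as an approximation of $\gamma$ by continuous curves, which you do not develop), your proof does not go through for general sequentially complete locally convex $E$.
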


\begin{proof}
The composition $f\circ\eta$ is continuous and the differential $df\colon V\times E\to F$ 
is continuous and linear in the second argument,
thus $df\circ(\eta,\gamma)\in\LLeb pabF$ for $[\gamma]=\eta^\prime$, by 
Lemma \ref{lem:linear2-in-Lp}.
In other words, the function
	\begin{align}
		\zeta\colon[a,b]\to F,\quad \zeta(t):=f(\eta(a)) + \int_a^t df(\eta(s),\gamma(s)) \,ds
	\end{align}
is in $\ACp abF$.

We claim that for each continuous linear form $\alpha\in E'$, the composition
$\alpha\circ f\circ\eta$ is in $\ACp ab\R$ (hence almost everywhere differentiable) 
and that $(\alpha\circ f\circ\eta)^\prime(t) = \alpha(df(\eta(t),\gamma(t)))$ for
almost every $t\in[a,b]$. From this will follow that
	\begin{align*}
		\alpha(f(\eta(t))) 
		&= (\alpha\circ f\circ\eta)(a) + \int_a^t \alpha(df(\eta(s),\gamma(s))) \,ds\\
		&=\alpha\left(f(\eta(a)) + \int_a^t df(\eta(s),\gamma(s)) \,ds\right)
		=\alpha(\zeta(t)),
	\end{align*}
for each $\alpha\in E'$ and $t\in[a,b]$, therefore $f\circ\eta = \zeta\in\ACp abF$, as $E'$ separates points
on $E$.

To prove the claim, we may assume that $F=\R$ and we show that the composition $f\circ\eta\colon[a,b]\to\R$ is
absolutely continuous in the sense that for each $\varepsilon>0$ there exists some $\delta>0$
such that $\sum_{j=1}^n |f(\eta(b_j))-f(\eta(a_j))| < \varepsilon$ whenever
$a\leq a_1 < b_1\leq a_2 < b_2\leq \cdots \leq a_n < b_n \leq b$ with 
$\sum_{j=1}^n |b_j-a_j| < \delta$. 

Now, as $\eta([a,b])$ is a compact subset of the open subset $V$, there is some
open neighborhood $U\subseteq V$ of $\eta([a,b])$ and some continuous seminorm
$q$ on $E$ such that 
	\begin{align}\label{eq:C1-act-vecsp-help}
		|f(u)-f(\bar{u})| \leq q(u-\bar{u})
	\end{align}
for all $u$, $\bar{u} \in U$, by \cite[Lemma 1.60]{GlMR}. Given $\varepsilon>0$ there exists some $\delta>0$
such that $\sum_{j=1}^n |\sigma(b_j)-\sigma(a_j)| < \varepsilon$ whenever
$a\leq a_1 < b_1\leq a_2 < b_2\leq \cdots \leq a_n < b_n \leq b$ with 
$\sum_{j=1}^n |b_j-a_j| < \delta$, because the function
	\begin{align*}
		\sigma\colon[a,b]\to \R,\quad \sigma(t):=\int_a^t q(\gamma(s)) \,ds
	\end{align*}
is absolutely continuous. Therefore, we have
	\begin{align*}
		\sum_{j=1}^n |f(\eta(b_j))-f(\eta(a_j))|
		\leq \sum_{j=1}^n q(\eta(b_j) - \eta(a_j))
		&= \sum_{j=1}^n q\left(\int_{a_j}^{b_j} \gamma(s) \,ds\right)\\
		&\leq \sum_{j=1}^n \int_{a_j}^{b_j} q(\gamma(s)) \,ds 
		< \delta,
	\end{align*}
where we used (\ref{eq:C1-act-vecsp-help}) in the first step. Hence $f\circ\eta$ is 
absolutely continuous, thus, by \cite[VII. 4.14]{Elst}, there is some $\varphi\in\mathcal{L}^1([a,b])$
such that
	\begin{align*}
		f(\eta(t)) = f(\eta(a)) + \int_a^t \varphi(s) \,ds,
	\end{align*}
in other words 
	\begin{align*}
		f\circ\eta\in AC_{L^1} ([a,b],\R)\quad\mbox{ and }\quad
		(f\circ\eta)^\prime(t)=\varphi(t)\mbox{ for a.e. }t\in[a,b].
	\end{align*}
	
Now, we want to show that $\varphi(t)=df(\eta(t),\gamma(t))$ for almost every $t\in[a,b]$, that is,
$\varphi\in\mathcal{L}^p([a,b])$. To this end, we may assume that there exists a sequence
$(K_n)_{n\in\N}$ of compact subsets of $[a,b]$ such that for every
$n\in\N$ the restriction $\gamma|_{K_n}$ is continuous , 
$\lambda([a,b]\setminus\bigcup_{n\in\N}K_n)=0$ and $\gamma(t)=0$ for every
$t\notin\bigcup_{n\in\N}K_n$.

Each of the sets
	\begin{align*}
		L_n := \eta([a,b])\cup\bigcup_{m=1}^n \gamma(K_m)
	\end{align*}
is compact and metrizable, hence by \cite[Lemma 1.11]{GlMR}, there exists
a locally convex topology $\mathcal{T}_{X_n}$ on each vector subspace
	\begin{align*}
		X_n := \linspan(L_n),
	\end{align*}
which is metrizable, separable and coarser than the induced topology $\mathcal{O}_{X_n}$.
Then on each $X_n$, there is a countable family
$\Lambda_n$ of continuous (with respect to $\mathcal{T}_{X_n}$) linear functionals separating the points 
(see [Chapter II, Prop. 4]{Schwartz}).
Consequently, the countable family $\Lambda:=\bigcup_{n\in\N}\Lambda_n$ separates the points on the
vector space $X:=\bigcup_{n\in\N} X_n$, which enables to 
define a metrizable locally convex topology $\mathcal{T}_X$
coarser than the induced topology $\mathcal{O}_X$. 
On the other hand, each of the $m$-fold sums
	\begin{align*}
		L_{m,n} := [-m,m]L_n+\cdots+[-m,m]L_n
	\end{align*}
is compact (with respect to $\mathcal{T}_{X_n}$ and $\mathcal{O}_{X_n}$), 
and $X_n = \bigcup_{m\in\N}L_{m,n}$, thus
	\begin{align*}
		X = \bigcup_{m,n\in\N}L_{m,n}
	\end{align*}
is $\sigma$-compact.

Then, the space $X\times X\times \R$ has a locally convex metrizable $\sigma$-compact
topology $\mathcal{T}$, say, $X\times X\times \R = \bigcup_{n\in\N}C_n$.
Then $\mathcal{O}_{C_n} = \mathcal{T}_{C_n}$, where $\mathcal{O}_{C_n}$, $\mathcal{T}_{C_n}$
are the topologies on $C_n$ induced by $E\times E\times\R$ and $X\times X\times\R$, respectively.
Hence $V^{[1]}\cap C_n\in \mathcal{T}_{C_n}$ and $\mathcal{T}_{C_n}$ is compact and metrizable, hence
second countable. Therefore, $V^{[1]}\cap C_n$ is $\sigma$-compact (being locally compact with
countable base), that is, $V^{[1]}\cap C_n$ is a countable union of compact subsets, hence
so is $(V\cap X)^{[1]} = V^{[1]}\cap(X\times X\times \R) = \bigcup_{n\in\N}(V^{[1]}\cap C_n)$,
so we may write $(V\cap X)^{[1]} = \bigcup_{n\in\N}A_n$ with compact subsets $A_n$.

Next, we will construct a metrizable locally convex topology on $X$ such that
$\eta\in\ACp ab{V\cap X}$ with $\eta^\prime = [\gamma]\in\Leb pab{V\cap X}$ and 
such that $f^{[1]}|_{(V\cap X)^{[1]}}$ remains continuous. 
To this end, fix some $(x,y,t)\in (V\cap X)^{[1]}$. Then there
exists $n\in\N$ such that $(x,y,t)\in A_n$. Further, for every $k\in\N$
there exists a continuous seminorm $q_{n,k}$ on $E\times E\times\R$ such that
	\begin{align*}
		|f^{[1]}(x,y,t)-f^{[1]}(v,w,s)|<\frac{1}{k} \quad\quad \forall (v,w,s)\in B_1^{q_{n,k}}(x,y,t)
	\end{align*}
(see Lemma \ref{lem:compact-continuity}).
Consequently, there is a continuous seminorm $\pi_{n,k}$ on $E$ and $\delta>0$
such that 
	\begin{align*}
		|f^{[1]}(x,y,t)&-f^{[1]}(v,w,s)|<\frac{1}{k} \\
		&\forall (v,w,s)\in B_1^{\pi_{n,k}}(x)\times B_1^{\pi_{n,k}}(y)\times ]t-\delta,t+\delta[
		\subseteq B_1^{q_{n,k}}(x,y,t).
	\end{align*}

We endow $X$ with the metrizable locally convex topology $\mathcal{T}$ defined by the countable
family $\{\pi_{n,k}|_X : n,k\in\N\}$. This topology is coarser than the induced topology,
hence $\eta\colon[a,b]\to V\cap X$ remains continuous and $\eta(t)-\eta(a)=\int_a^t \gamma(s) \,ds$
is the weak integral of $\gamma$ in $X$ for every $t\in[a,b]$. 
To see this, let $\alpha$ be a continuous linear functional on $(X,\mathcal{T})$.
Then $\alpha$ is continuous with respect to the induced topology on $X$ (which is finer than $\mathcal{T}$)
hence there is some continuous linear extension $\mathcal{A}\in E'$ of $\alpha$.
Thus
	\begin{align*}
		\alpha(\eta(t)-\eta(a)) = \mathcal{A}(\eta(t)-\eta(a))
		=\int_a^t \mathcal{A}(\gamma(s)) \,ds = \int_a^t \alpha(\gamma(s)) \,ds.
	\end{align*}
Therefore, $\eta\in\ACp ab{V\cap X}$ with $\eta^\prime=[\gamma]$ and, by the
construction of the topology, the map $f^{[1]}$ is continuous
in every $(x,y,t)\in(V\cap X)^{[1]}$ with respect to the obtained topology on $X\times X\times \R$.
As $\mathcal{T}$ is metrizable, the map $\eta\colon[a,b]\to V\cap X$ is differentiable in almost
every $t\in[a,b]$ with $\eta^\prime(t)=\gamma(t)$ (see Proposition \ref{prop:weak-integral-diff}),
so in every such $t$ we have
	\begin{align*}
		\frac{1}{h}(f(\eta(t+h) - f(\eta(t)))
		&=\frac{1}{h}(f(\eta(t)+\frac{\eta(t+h)-\eta(t)}{h})-f(\eta(t)))\\
		&=f^{[1]}(\eta(t),\frac{\eta(t+h)-\eta(t)}{h},h)
		\to df(\eta(t),\gamma(t))
	\end{align*}
as $h\to 0$.
That means, for almost every $t\in[a,b]$ we have
	\begin{align}\label{eq:AC-vecsp-chain-rule}
		\varphi(t)=(f\circ\eta)^\prime(t)=df(\eta(t),\gamma(t)),
	\end{align}
whence $\varphi\in\mathcal{L}^p([a,b])$ and $f\circ\eta\in \ACp ab\R$.
\end{proof}

Also the following (corresponding to \cite[Lemma 3.28]{GlMR}) holds:

\begin{proposition}\label{prop:push-forward-AC-vecsp}
Let $E$, $F$ be sequentially complete locally convex spaces, let $V\subseteq E$ be an open subset
and $p\in[1,\infty]$.
If $f\colon V\to F$ is a $C^{k+2}$-function (for $k\in\N\cup\{0,\infty\}$), then the map
	\begin{align*}
		\ACp abf\colon\ACp abV \to \ACp abF,\quad
		\eta\mapsto f\circ\eta
	\end{align*}
is $C^k$.
\end{proposition}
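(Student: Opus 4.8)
The strategy is to reduce the claim to the map on the model spaces $E\times L^p([a,b],E)$ and $F\times L^p([a,b],F)$ induced by $\Phi$, and then recognise the derivative-component of that map as a push-forward of the type handled by Proposition~\ref{prop:double-push-forward-Lp}. Concretely, using the isomorphisms $\Phi$ from Definition~\ref{def:abs-cont-vector-space}, the map $\ACp abf$ corresponds under $\Phi$ to
\[
  (\eta(a),[\gamma])\ \longmapsto\ \big(f(\eta(a)),\,(f\circ\eta)^\prime\big),
\]
and by the chain rule established in Lemma~\ref{lem:C1-maps-act-on-AC-vecsp} (Equation~(\ref{eq:AC-vecsp-chain-rule})) the second component is $[df\circ(\eta,\gamma)]$. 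Since $\eta$ in turn is recovered from $(\eta(a),[\gamma])$ by $\eta(t)=\eta(a)+\int_a^t\gamma(s)\,ds$, the whole map is a composition of: the evaluation $\ev_a$ and the continuous-curve reconstruction $(\eta(a),[\gamma])\mapsto\eta\in C([a,b],V)$ (which is continuous linear plus a constant by Proposition~\ref{prop:l1-function-weak-integral} and Lemma~\ref{lem:AC-vecsp-top-embedding}, hence smooth), followed by $f$ on the point-evaluation factor and by $\Theta_{df}\colon (\eta,[\gamma])\mapsto[df\circ(\eta,\gamma)]$ on the derivative factor.

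First I would make precise that $\ACp abV$ is open in $\ACp abE$ (Remark~\ref{rem:open-in-AC-vecsp}) so the statement is well posed, and that $f\circ\eta\in\ACp abF$ for $\eta\in\ACp abV$ is exactly Lemma~\ref{lem:C1-maps-act-on-AC-vecsp} together with the chain-rule identity, so $\ACp abf$ is genuinely defined into $\ACp abF$. Next, since $\ACp abF$ carries the initial topology induced by $\Phi_F\colon\ACp abF\to F\times\Leb pabF$, it suffices (by the componentwise criterion for $C^k$-maps into a product) to show that $\eta\mapsto f(\eta(a))$ is $C^k$ as a map $\ACp abV\to F$ and that $\eta\mapsto[df\circ(\eta,\gamma)]=(f\circ\eta)^\prime$ is $C^k$ as a map $\ACp abV\to\Leb pabF$. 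The first of these is $f\circ\ev_a$ with $\ev_a$ continuous linear (Remark~\ref{rem:eval-smooth-AC-vecsp}) and $f$ of class $C^{k+2}$, hence $C^k$. For the second, $df\colon V\times E\to F$ is $C^{k+1}$ and linear in the second argument, so Proposition~\ref{prop:double-push-forward-Lp} gives that $\Theta_{df}\colon C([a,b],V)\times\Leb pabE\to\Leb pabF$, $(\varphi,[\psi])\mapsto[df\circ(\varphi,\psi)]$, is $C^k$; precomposing with the smooth map $\ACp abV\to C([a,b],V)\times\Leb pabE$, $\eta\mapsto(\incl(\eta),\eta^\prime)$ (smooth because $\incl$ is continuous linear by Lemma~\ref{lem:AC-vecsp-top-embedding} and $\eta\mapsto\eta^\prime=\pr_2\circ\Phi(\eta)$ is continuous linear) yields that $\eta\mapsto[df\circ(\eta,\gamma)]$ is $C^k$.

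The main obstacle is the bookkeeping at the interface between the two descriptions of $\ACp abF$: one has to be careful that the value $(f\circ\eta)^\prime$ appearing in the definition of $\Phi_F\circ\ACp abf$ really equals $[df\circ(\eta,\gamma)]$ — this is precisely what Lemma~\ref{lem:C1-maps-act-on-AC-vecsp} was set up to deliver, via the almost-everywhere identity $\varphi(t)=df(\eta(t),\gamma(t))$ after testing against $E^\prime$ — and that the map $\eta\mapsto(\incl(\eta),\eta^\prime)$ into $C([a,b],V)\times\Leb pabE$ lands in the open set on which $\Theta_{df}$ is defined, which is immediate since $\eta([a,b])\subseteq V$. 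Everything else is a formal assembly: $\ACp abf=\Phi_F^{-1}\circ(f\circ\ev_a,\ \Theta_{df}\circ(\incl,\,\pr_2\circ\Phi))\circ\Phi_E^{-1}$ restricted to the open set $\ACp abV$, a composition of $C^k$-maps, hence $C^k$. For $k=\infty$ the same identity applies with $f$ smooth, since then $df$ is smooth and Proposition~\ref{prop:double-push-forward-Lp} gives $\Theta_{df}$ smooth.
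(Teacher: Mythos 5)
Your proposal is correct and follows essentially the same route as the paper: decompose via $\Phi$ into the evaluation component $\eta\mapsto f(\eta(a))$ and the derivative component, identify the latter with $[df\circ(\eta,\gamma)]$ via Equation~(\ref{eq:AC-vecsp-chain-rule}), and apply Proposition~\ref{prop:double-push-forward-Lp} to $df$. The extra bookkeeping you supply (precomposing $\Theta_{df}$ with the continuous linear map $\eta\mapsto(\incl(\eta),\eta^\prime)$) only makes explicit what the paper leaves implicit.
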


\begin{proof}
The map $\ACp abf$ is defined by Lemma \ref{lem:C1-maps-act-on-AC-vecsp}, by 
definition of the topology on $\ACp abF$ (see Definition \ref{def:abs-cont-vector-space}), 
$\ACp abf$ will be $C^k$ if each of the components of
	\begin{align}\label{eq:push-forward-AC-vecsp-help}
		\ACp abV\to F\times \Leb pabF,\quad
		\eta\mapsto (f(\eta(a)), (f\circ\eta)^\prime)
	\end{align}
is $C^k$.
The first component 
	\begin{align*}
		\ACp abV\to F,\quad \eta\mapsto (f\circ\pr_1\circ\Phi)(\eta)=f(\eta(a))
	\end{align*}
is indeed $C^k$, where $\Phi$ is as in Definition \ref{def:abs-cont-vector-space}
and $\pr_1$ is the projection on the first component.
Further, for $\eta^\prime = [\gamma]\in\Leb pabE$ we have $(f\circ\eta)^\prime = [df\circ(\eta,\gamma)]$
by (\ref{eq:AC-vecsp-chain-rule}) and 
	\begin{align*}
		C([a,b],V)\times \Leb pabE\to\Leb pabF,\quad
		(\eta,[\gamma])\mapsto [df\circ(\eta,\gamma)]
	\end{align*}
is $C^k$, the derivative $df\colon V\times E\to F$ being $C^{k+1}$ and linear in the second
argument (see Proposition \ref{prop:double-push-forward-Lp}). Hence, the second component
of (\ref{eq:push-forward-AC-vecsp-help}) is $C^k$, as required.
\end{proof}

\begin{remark}\label{rem:AC-vecsp-prod}
Since any continuous linear function $f\colon E\to F$ is smooth, we conclude from
Proposition \ref{prop:push-forward-AC-vecsp}
that
	\begin{align*}
		\ACp ab{E\times F} \cong \ACp abE \times \ACp abF
	\end{align*}
as locally convex spaces (proceeding as in Remark \ref{rem:Lp-product}).
\end{remark}

The next result will be helpful.

\begin{lemma}\label{lem:AC-vecsp-locality-axiom}
Let $E$ be a sequentially complete locally convex space, $p\in[1,\infty]$ and $a=t_0<t_1<\ldots<t_n=b$. 
Then the function
	\begin{align}
		\Psi\colon \ACp abE \to \prod_{j=1}^n \ACp {t_{j-1}}{t_j}E,\quad
		\eta\mapsto (\eta|_{[t_{j-1},t_j]})_{j=1,\ldots,n}
	\end{align}
is a linear topological embedding with closed image.
\end{lemma}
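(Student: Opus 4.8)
The plan is to transport the statement through the structural isomorphism $\Phi$ of Definition~\ref{def:abs-cont-vector-space} and the locality isomorphism $\Gamma_E$ of Lemma~\ref{lem:Lp-locality-axiom}, thereby reducing it to a transparent linear description of $\Psi$ in the coordinates $(\eta(a),\eta')$. First we check that $\Psi$ is well defined: for $\eta\in\ACp abE$ with $\eta'=[\gamma]$ we have $\eta(t)=\eta(t_{j-1})+\int_{t_{j-1}}^t\gamma(s)\,ds$ for $t\in[t_{j-1},t_j]$, and $\gamma|_{[t_{j-1},t_j]}\in\LLeb p{t_{j-1}}{t_j}E$ (by Lemma~\ref{lem:affine-linear-Lp}, or Lemma~\ref{lem:Lp-locality-axiom}), so $\eta|_{[t_{j-1},t_j]}\in\ACp{t_{j-1}}{t_j}E$ with $(\eta|_{[t_{j-1},t_j]})'=[\gamma|_{[t_{j-1},t_j]}]$. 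Clearly $\Psi$ is linear and injective. We write $\Phi_j\colon\ACp{t_{j-1}}{t_j}E\to E\times\Leb p{t_{j-1}}{t_j}E$ for the isomorphisms analogous to $\Phi$, and $I_j\colon\Leb p{t_{j-1}}{t_j}E\to E$, $[\gamma]\mapsto\int_{t_{j-1}}^{t_j}\gamma(s)\,ds$. Each $I_j$ is well defined (a function vanishing a.e. has weak integral $0$, since continuous linear forms separate the points of $E$ and Lemma~\ref{lem:lusin-almost-zero} applies), linear, and continuous, because $q(I_j([\gamma]))\le\|\gamma\|_{\mathcal{L}^1,q}\le(t_j-t_{j-1})^{1-\frac1p}\|\gamma\|_{\mathcal{L}^p,q}$ for every continuous seminorm $q$ on $E$ (Remark~\ref{rem:conditions-weak-integral}, Remark~\ref{rem:Lp-inclusions}); here sequential completeness of $E$ is used to guarantee existence of the weak integral (Proposition~\ref{prop:l1-function-weak-integral}).

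Next we set $\widetilde\Psi:=\bigl(\prod_{j=1}^n\Phi_j\bigr)\circ\Psi\circ\Phi^{-1}\colon E\times\Leb pabE\to\prod_{j=1}^n\bigl(E\times\Leb p{t_{j-1}}{t_j}E\bigr)$. Writing $([\gamma_1],\dots,[\gamma_n]):=\Gamma_E([\gamma])$, the formulas above give
\begin{align*}
	\widetilde\Psi(v,[\gamma])=\Bigl(\bigl(v+\textstyle\sum_{i=1}^{j-1}I_i([\gamma_i]),\ [\gamma_j]\bigr)\Bigr)_{j=1,\dots,n}.
\end{align*}
Since $\Gamma_E$ and each $I_i$ are continuous and linear, $\widetilde\Psi$ is continuous and linear, and hence so is $\Psi=\bigl(\prod_j\Phi_j\bigr)^{-1}\circ\widetilde\Psi\circ\Phi$.

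For the image, we observe that a tuple $\bigl((w_j,[\gamma_j])\bigr)_j$ lies in $\im\widetilde\Psi$ if and only if $w_{j+1}=w_j+I_j([\gamma_j])$ for $j=1,\dots,n-1$: necessity is read off the displayed formula, and for sufficiency one takes $v:=w_1$ and $[\gamma]:=\Gamma_E^{-1}\bigl(([\gamma_j])_j\bigr)$. Thus $\im\widetilde\Psi$ is the intersection of the kernels of the continuous linear maps $\bigl((w_j,[\gamma_j])\bigr)_j\mapsto w_{j+1}-w_j-I_j([\gamma_j])$, $j=1,\dots,n-1$, hence closed, $E$ being Hausdorff. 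Moreover $\widetilde\Psi$ is a topological embedding, since its inverse on $\im\widetilde\Psi$ is the restriction of the continuous linear map $\bigl((w_j,[\gamma_j])\bigr)_j\mapsto\bigl(w_1,\Gamma_E^{-1}(([\gamma_j])_j)\bigr)$ defined on the whole product (using continuity of $\Gamma_E^{-1}$ from Lemma~\ref{lem:Lp-locality-axiom}). Conjugating back by the isomorphisms $\Phi$ and $\prod_j\Phi_j$ then shows that $\Psi$ is a linear topological embedding whose image $\bigl(\prod_j\Phi_j\bigr)^{-1}(\im\widetilde\Psi)$ is closed.

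I expect no genuine obstacle here: the only bookkeeping is the identity $(\eta|_{[t_{j-1},t_j]})'=[\gamma|_{[t_{j-1},t_j]}]$ and the well-definedness and continuity of the integral maps $I_j$ on the quotient spaces, both of which are immediate from the integral representation and Proposition~\ref{prop:l1-function-weak-integral}; everything else is a transport of structure through $\Phi$, $\Phi_j$ and $\Gamma_E$.
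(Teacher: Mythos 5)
Your proof is correct, and it follows the same overall strategy as the paper's (reduce everything to the coordinates $(\eta(a),\eta')$ via $\Phi$ and the locality isomorphism $\Gamma_E$ of Lemma~\ref{lem:Lp-locality-axiom}), but two steps are packaged genuinely differently. First, you make the transported map $\widetilde\Psi$ fully explicit by introducing the integral maps $I_j\colon\Leb p{t_{j-1}}{t_j}E\to E$ and writing $\widetilde\Psi(v,[\gamma])=\bigl(\bigl(v+\sum_{i<j}I_i([\gamma_i]),[\gamma_j]\bigr)\bigr)_j$; the paper instead argues componentwise, obtaining continuity of $\eta\mapsto\eta|_{[t_{j-1},t_j]}$ from the continuity of the evaluation $\ev_{t_{j-1}}$ (Remark~\ref{rem:eval-smooth-AC-vecsp}) and of the restriction on $L^p$-classes, and never needs the $I_j$ explicitly (your verification that the $I_j$ are well defined and continuous, via Proposition~\ref{prop:l1-function-weak-integral} and Remarks~\ref{rem:conditions-weak-integral} and~\ref{rem:Lp-inclusions}, is the extra bookkeeping your route requires, and it is done correctly). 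Second, for closedness of the image the paper uses a convergent-net argument with the matching conditions $\eta_j(t_j)=\eta_{j+1}(t_j)$, whereas you exhibit $\im\widetilde\Psi$ as the intersection of kernels of the continuous linear maps $\bigl((w_j,[\gamma_j])\bigr)_j\mapsto w_{j+1}-w_j-I_j([\gamma_j])$, which is cleaner and makes the linear-subspace structure of the image transparent; the two conditions are equivalent since $w_j+I_j([\gamma_j])=\ev_{t_j}(\eta_j)$ and $w_{j+1}=\ev_{t_j}(\eta_{j+1})$. The description of the inverse on the image, $\bigl((w_j,[\gamma_j])\bigr)_j\mapsto\bigl(w_1,\Gamma_E^{-1}(([\gamma_j])_j)\bigr)$, coincides with the paper's map $\Gamma$ read through the charts $\Phi_j$. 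No gaps.
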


\begin{proof}
Clearly, for $\eta\in\ACp abE$ with $\eta^\prime = [\gamma]$ and every $j\in\{1,\ldots,n\}$ we have 
$\eta|_{[t_{j-1},t_j]}\in\ACp {t_{j-1}}{t_j}E$ with $(\eta|_{[t_{j-1},t_j]})^\prime = \left[\gamma|_{[t_{j-1},t_j]}\right]$,
by Lemma \ref{lem:Lp-locality-axiom}, that is, the function $\Psi$ is defined. 
Also the linearity and injectivity are clear. 

We show that each of the components
	\begin{align*}
		\ACp abE\to \ACp {t_{j-1}}{t_j}E,\quad
		\eta\mapsto \eta|_{[t_{j-1},t_j]}
	\end{align*}
is continuous, which will be the case if each
	\begin{align*}
		\ACp abE\to E\times\Leb p{t_{j-1}}{t_j}E,\quad
		\eta\mapsto \left(\eta(t_{j-1}),\left[\gamma|_{[t_{j-1},t_j]}\right]\right)
	\end{align*}
is continuous (using the isomorphism as in Definition \ref{def:abs-cont-vector-space}).
But the first component is the continuous evaluation map on $\ACp abE$, see
Remark \ref{rem:eval-smooth-AC-vecsp}, and the second component is a composition
of the continuous maps $\ACp abE\to\Leb pabE, \eta\mapsto [\gamma]$
and $\Leb pabE\to \Leb p{t_{j-1}}{t_j}E, [\gamma]\mapsto \left[\gamma|_{[t_{j-1},t_j]}\right]$,
see Definition \ref{def:abs-cont-vector-space} and Lemma \ref{lem:Lp-locality-axiom}.
Therefore, $\Psi$ is continuous.

Note that $(\eta_1,\ldots,\eta_n)\in\im(\Psi)\subseteq \prod_{j=1}^n \ACp {t_{j-1}}{t_j}E$ if 
$\eta_j(t_j) = \eta_{j+1}(t_j)$ for all $j\in\{1,\ldots,n-1\}$, thus the map
	\begin{align*}
		\Gamma(\eta_1,\ldots,\eta_n)\colon [a,b]\to E, \quad t\mapsto \eta_j(t) \mbox{ for } t\in[t_{j-1},t_j]
	\end{align*}
is continuous and it is easy to show that $\Gamma(\eta_1,\ldots,\eta_n)\in\ACp abE$
and that
	\begin{align*}
		\Gamma\colon\im(\Psi)\to\ACp abE
	\end{align*}
is the inverse of $\Psi|^{\im(\Psi)}$. The continuity of $\Gamma$ follows from the continuity of
	\begin{align*}
		\im(\Psi)&\to E\times \Leb pabE,\\
		(\eta_1,\ldots,\eta_n)&\mapsto (\eta_1(a), \Gamma(\eta_1,\ldots,\eta_n)^\prime)
		=(\eta_1(a), \Gamma_E^{-1}(\eta_1^\prime,\ldots,\eta_n^\prime)),
	\end{align*}
where $\Gamma_E$ is the isomorphism from Lemma \ref{lem:Lp-locality-axiom}.
Hence $\Psi$ is a topological embedding.

Finally, let $(\eta_{1,\alpha},\ldots,\eta_{n,\alpha})_{\alpha\in A}$ be a convergent net in $\im(\Psi)$
with limit point $(\eta_1,\ldots,\eta_n)\in\prod_{j=1}^n \ACp {t_{j-1}}{t_j}E$, thus
for every $j\in\{1,\ldots,n-1\}$ we have
	\begin{align*}
		\eta_j(t_j) = \lim_{\alpha\in A}\eta_{j,\alpha}(t_j) = 
		\lim_{\alpha\in A}\eta_{j+1,\alpha}(t_j) = \eta_{j+1}(t_j),
	\end{align*}
therefore $(\eta_1,\ldots,\eta_n)\in\im(\Psi)$.
\end{proof}

\begin{remark}\label{rem:restriction-abs-cont-vecsp}
Lemma \ref{lem:AC-vecsp-locality-axiom} immediately implies that whenever $\eta\in C([a,b],E)$
and $\eta|_{[t_{j-1},t_j]}\in \ACp {t_{j-1}}{t_j}E$ for some $\abpart$, we have $\eta\in\ACp abE$.
\end{remark}

\begin{lemma}\label{lem:affine-linear-AC-vecsp}
Let $E$ be a sequentially complete locally convex space, $p\in[1,\infty]$.
Let $\eta\in\ACp abE$ and for $a\leq \alpha<\beta\leq b$ define
	\begin{align*}
		f\colon[c,d]\to[a,b],\quad f(t):=\alpha + \frac{t-c}{d-c}(\beta-\alpha).
	\end{align*}
Then $\eta\circ f\in\ACp cdE$ and
	\begin{align*}
		(\eta\circ f)^\prime = \frac{\beta-\alpha}{d-c}[\gamma\circ f],
	\end{align*}
where $[\gamma]=\eta^\prime$.

Furthermore, the function
	\begin{align*}
		AC_{L^p}(f,E)\colon\ACp abE\to\ACp cdE,\quad \eta\mapsto \eta\circ f
	\end{align*}
is continuous and linear.
\end{lemma}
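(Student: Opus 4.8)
The statement is the affine-reparametrisation lemma for $AC_{L^p}$-spaces, the analogue of Lemma~\ref{lem:affine-linear-Lp} for $\mathcal{L}^p$. The plan is to reduce everything to facts already established about $\mathcal{L}^p$-spaces and weak integrals. First I would check that $\eta\circ f$ lies in $\ACp cdE$ by exhibiting its derivative: set $[\gamma]=\eta^\prime$, so that $\eta(t)=\eta(a)+\int_a^t\gamma(s)\,ds$, and consider the candidate $\zeta:=\tfrac{\beta-\alpha}{d-c}(\gamma\circ f)$. By Lemma~\ref{lem:affine-linear-Lp}, $\gamma\circ f\in\LLeb pcdE$, hence $\zeta\in\LLeb pcdE$. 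To verify $(\eta\circ f)(t)=(\eta\circ f)(c)+\int_c^t\zeta(s)\,ds$ for all $t\in[c,d]$, I would test against continuous linear functionals $\alpha\in E'$: then $\alpha\circ\gamma\in\mathcal{L}^1$, and the scalar-valued change-of-variables formula (the one invoked via \cite[Satz 19.4]{B92} in the proof of Lemma~\ref{lem:affine-linear-Lp}) gives
\begin{align*}
\int_c^t \alpha(\zeta(s))\,ds
=\frac{\beta-\alpha}{d-c}\int_c^t \alpha(\gamma(f(s)))\,ds
=\int_{f(c)}^{f(t)}\alpha(\gamma(u))\,du
=\alpha(\eta(f(t)))-\alpha(\eta(f(c))).
\end{align*}
Since $E'$ separates points, this yields $(\eta\circ f)(t)-(\eta\circ f)(c)=\int_c^t\zeta(s)\,ds$, so $\eta\circ f\in\ACp cdE$ with $(\eta\circ f)^\prime=[\zeta]=\tfrac{\beta-\alpha}{d-c}[\gamma\circ f]$, as claimed.

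For continuity and linearity of $AC_{L^p}(f,E)$, I would use the defining isomorphism $\Phi\colon\ACp cdE\to E\times\Leb pcdE$ (Definition~\ref{def:abs-cont-vector-space}): it suffices to show each component of $\eta\mapsto\big((\eta\circ f)(c),(\eta\circ f)^\prime\big)=\big(\eta(\alpha),\tfrac{\beta-\alpha}{d-c}[\gamma\circ f]\big)$ is continuous and linear. The first component is $\eta\mapsto\eta(\alpha)=\ev_\alpha(\eta)$, which is continuous and linear by Remark~\ref{rem:eval-smooth-AC-vecsp}. The second is the composite of $\ACp abE\to\Leb pabE$, $\eta\mapsto\eta^\prime=[\gamma]$ (continuous linear by the definition of the topology on $\ACp abE$), followed by $\mathcal{L}^p(f,E)$ descended to the quotients, $\Leb pabE\to\Leb pcdE$, $[\gamma]\mapsto[\gamma\circ f]$ (continuous linear by Lemma~\ref{lem:affine-linear-Lp}, noting it descends to the quotient since it maps null functions to null functions), and finally multiplication by the scalar $\tfrac{\beta-\alpha}{d-c}$. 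Composing continuous linear maps gives the result.

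The only mildly delicate point — and the one I would treat carefully — is the identity $(\eta\circ f)^\prime=\tfrac{\beta-\alpha}{d-c}[\gamma\circ f]$, i.e. making sure the change-of-variables bookkeeping produces exactly this scalar factor and that $\gamma\circ f$ is genuinely a measurable $\mathcal{L}^p$-representative on $[c,d]$; but both are already packaged in Lemma~\ref{lem:affine-linear-Lp}, which guarantees measurability of $\gamma\circ f$ (via the cited property of $f$ from \cite{PonomZero}) and gives the integral transformation formula~\eqref{eq:affine-linear-p-integral}. Once the derivative formula is in hand, everything else is a formal consequence of the topology on $\ACp cdE$ being the initial topology with respect to $\Phi$, so there is no real obstacle beyond assembling the pieces.
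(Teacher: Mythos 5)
Your proposal is correct and follows essentially the same route as the paper: the derivative formula is obtained by testing $\int_c^t\zeta(s)\,ds$ against continuous linear functionals and applying the scalar change-of-variables formula, and continuity is reduced via the defining isomorphism to the evaluation map and the continuous map $[\gamma]\mapsto[\gamma\circ f]$ from Lemma~\ref{lem:affine-linear-Lp}. The only cosmetic issue is the reuse of the symbol $\alpha$ both for the endpoint and for the linear functional; the paper writes $\mathcal{A}$ for the latter.
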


\begin{proof}
We know that $\gamma\circ f\in\LLeb pcdE$ (Lemma \ref{lem:affine-linear-Lp}) and for $t\in[c,d]$
we have
	\begin{align*}
		\eta(f(t))-\eta(f(c)) 
		= \int_{f(c)}^{f(t)} \gamma(s) \,ds,
	\end{align*}
since $[\gamma]=\eta^\prime$. Then for any continuous linear functional $\mathcal{A}$ on $E$
we have
	\begin{align*}
		\int_{f(c)}^{f(t)}\mathcal{A}(\gamma(s)) \,ds 
		= \frac{\beta-\alpha}{d-c}\int_c^t \mathcal{A}(\gamma(f(s))) \,ds
	\end{align*}
(see \cite[19.4 Satz]{B92}), whence
	\begin{align*}
		\eta(f(t))-\eta(f(c)) = \frac{\beta-\alpha}{d-c}\int_c^t \gamma(f(s)) \,ds,
	\end{align*}
in other words, $\eta\circ f\in \ACp cdE$ with $(\eta\circ f)^\prime = \frac{\beta-\alpha}{d-c}[\gamma\circ f]$.

To prove the continuity of the linear function $AC_{L^p}(f,E)$, we show that
	\begin{align*}
		\ACp abE\to E\times \Leb pcdE,\quad \eta\mapsto (\eta(f(c)), (\eta\circ f)^\prime)
	\end{align*}
is continuous (where we used the isomorphism from Definition \ref{def:abs-cont-vector-space}).
The first component
	\begin{align*}
		\ACp abE\to E,\quad \eta\mapsto \ev_{f(c)}(\eta)
	\end{align*}
is continuous, by Remark \ref{rem:eval-smooth-AC-vecsp}. Further, the map
	\begin{align*}
		\Psi\colon \Leb pabE\to \Leb pcdE,\quad [\gamma]\mapsto \frac{\beta-\alpha}{d-c}[\gamma\circ f]
	\end{align*}
is continuous, by Lemma \ref{lem:affine-linear-Lp}, hence the second component
	\begin{align*}
		\ACp abE\to \Leb pcdE,\quad \eta\mapsto\Psi(\eta^\prime)=(\eta\circ f)^\prime
	\end{align*}
is continuous, and the proof is finished.
\end{proof}

\subsection{Manifold-valued \emph{AC}-functions}

Let $M$ be a smooth manifold modelled on a locally convex space $E$, that is,
$M$ be a Hausdorff topological space together with an atlas of charts 
$\varphi\colon U_{\varphi}\to V_{\varphi}$ (homeomorphisms between open subsets of $M$ and $E$)
such that the transition maps $\psi\circ\varphi^{-1}$ are smooth functions
(as in \cite{GlHRestr}). The definition of the tangent space $T_xM$, the tangent manifold $TM$,
differentiable functions between manifolds and tangent maps between tangent spaces
are defined in the usual way.

The properties of the spaces $\ACp abE$, proved in the preceeding, enable us to
define spaces of absolutely continuous functions with values in manifolds $M$
modelled on sequentially complete locally convex spaces.

The corresponding definition can be found in \cite[Definition 3.20]{GlMR}.

\begin{definition}\label{def:abs-cont-mfd}
Let $M$ be a smooth manifold modelled on a sequentially complete locally convex space $E$.
For $p\in[1,\infty]$, denote by $\ACp abM$ the space of continuous functions $\eta\colon [a,b]\to M$
such that there exists some partition $\abpart$ with
	\begin{align*}
		\varphi_j\circ\eta|_{[t_{j-1},t_j]}\in \ACp {t_{j-1}}{t_j}E
	\end{align*}
for some charts $\varphi_j\colon U_j\to V_j$ such that $\eta([t_{j-1},t_j])\subseteq U_j$ for
$j=1,\ldots,n$.
\end{definition}

As in \cite[Lemma 3.21]{GlMR}, the construction of $\ACp abM$ does not depend
on the choice of the partition $\abpart$ or of the charts $\varphi_j$.

\begin{lemma}\label{lem:abs-cont-mfd-each-chart}
Let $\eta\in\ACp abM$, let $[\alpha,\beta]\subseteq[a,b]$ and $\varphi\colon U\to V$ be any
chart for $M$ such that $\eta([\alpha,\beta])\subseteq U$. Then
	\begin{align*}
		\varphi\circ\eta|_{[\alpha,\beta]}\in\ACp \alpha\beta E.
	\end{align*}
\end{lemma}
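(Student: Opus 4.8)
The plan is to reduce the statement to the already-established independence property by subdividing $[\alpha,\beta]$ finely enough so that each small piece lies simultaneously in one of the charts from the definition and in the given chart $\varphi$. First I would take, from Definition \ref{def:abs-cont-mfd}, a partition $a=t_0<t_1<\ldots<t_n=b$ and charts $\varphi_j\colon U_j\to V_j$ with $\eta([t_{j-1},t_j])\subseteq U_j$ and $\varphi_j\circ\eta|_{[t_{j-1},t_j]}\in\ACp{t_{j-1}}{t_j}E$. Intersecting this partition with the endpoints $\alpha,\beta$ yields a partition $\alpha=s_0<s_1<\ldots<s_m=\beta$ of $[\alpha,\beta]$ such that each $[s_{i-1},s_i]$ is contained in some $[t_{j(i)-1},t_{j(i)}]$; hence $\eta([s_{i-1},s_i])\subseteq U_{j(i)}\cap U$ and $\varphi_{j(i)}\circ\eta|_{[s_{i-1},s_i]}\in\ACp{s_{i-1}}{s_i}E$ by Lemma \ref{lem:AC-vecsp-locality-axiom} (restriction of an $AC_{L^p}$-function to a subinterval stays $AC_{L^p}$, via Lemma \ref{lem:Lp-locality-axiom}).

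Next I would invoke the smooth compatibility of charts: the transition map $\varphi\circ\varphi_{j(i)}^{-1}$ is smooth, hence in particular $C^{k+2}$ for any $k$, and so Proposition \ref{prop:push-forward-AC-vecsp} (or just Lemma \ref{lem:C1-maps-act-on-AC-vecsp} if we only want membership, not smoothness) applies: composing $\varphi_{j(i)}\circ\eta|_{[s_{i-1},s_i]}\in\ACp{s_{i-1}}{s_i}{(\varphi_{j(i)}(U_{j(i)}\cap U))}$ with the transition map gives $\varphi\circ\eta|_{[s_{i-1},s_i]}\in\ACp{s_{i-1}}{s_i}E$. One has to be a little careful that the image of $\varphi_{j(i)}\circ\eta|_{[s_{i-1},s_i]}$ actually lands in the open set $\varphi_{j(i)}(U_{j(i)}\cap U)$ on which the transition map is defined, but this is exactly the condition $\eta([s_{i-1},s_i])\subseteq U_{j(i)}\cap U$ arranged above, together with Remark \ref{rem:open-in-AC-vecsp}.

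Finally I would reassemble: we now have a continuous function $\varphi\circ\eta|_{[\alpha,\beta]}\colon[\alpha,\beta]\to E$ whose restriction to each $[s_{i-1},s_i]$ lies in $\ACp{s_{i-1}}{s_i}E$, so Remark \ref{rem:restriction-abs-cont-vecsp} (the converse direction of Lemma \ref{lem:AC-vecsp-locality-axiom}) yields $\varphi\circ\eta|_{[\alpha,\beta]}\in\ACp\alpha\beta E$, which is the claim.

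The only genuine subtlety — and the step I expect to take the most care — is the domain bookkeeping in the chart-change step: one must make sure that on each small interval the relevant transition map is defined on an open set containing the image of the $E$-valued $AC_{L^p}$-curve, and that $\ACp{s_{i-1}}{s_i}{(\text{open set})}$ is genuinely open in $\ACp{s_{i-1}}{s_i}E$ so that Proposition \ref{prop:push-forward-AC-vecsp} applies; both are handled by Remark \ref{rem:open-in-AC-vecsp}. Everything else is a routine application of the locality lemmas and the pushforward proposition already proved, so the proof should be short.
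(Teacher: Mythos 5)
Your proposal is correct and follows essentially the same route as the paper: refine the partition from Definition \ref{def:abs-cont-mfd} so that each piece of $[\alpha,\beta]$ lies in a single chart domain $U_j\cap U$, apply the smooth transition map $\varphi\circ\varphi_j^{-1}$ via Lemma \ref{lem:C1-maps-act-on-AC-vecsp}, and reassemble with Remark \ref{rem:restriction-abs-cont-vecsp}. The paper merely shortens the bookkeeping by assuming $\alpha$ and $\beta$ are partition points, and your explicit check that the transition map is defined on an open set containing the image of the curve is a detail the paper leaves implicit.
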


\begin{proof}
We have $\alpha\in[t_k,t_{k+1}]$ and $\beta\in[t_{l-1},t_l]$ for some $k,l$, for simplicity
we may assume $\alpha=t_k$, $\beta=t_l$. For $j\in\{k+1,\ldots,l\}$ we have
	\begin{align*}
		\varphi\circ\eta|_{[t_{j-1},t_j]} = (\varphi\circ\varphi_j^{-1})\circ(\varphi_j\circ\eta|_{[t_{j-1},t_j]}).
	\end{align*}
Since $\varphi\circ\varphi_j^{-1}$ is a smooth function and 
$\varphi_j\circ\eta|_{[t_{j-1},t_j]} \in \ACp {t_{j-1}}{t_j}E$, the above composition is in $\ACp {t_{j-1}}{t_j}E$
by Lemma \ref{lem:C1-maps-act-on-AC-vecsp}. From Remark \ref{rem:restriction-abs-cont-vecsp} follows
$\varphi\circ\eta|_{[\alpha,\beta]}\in\ACp \alpha\beta E$.
\end{proof}

\begin{remark}\label{rem:AC-vecsp-mfd}
From the above lemma follows that if the smooth manifold $M$ is a locally convex space (with global
chart $\id_M$), then 
the space $\ACp abM$ defined in Definition $\ref{def:abs-cont-mfd}$ is the same as the space defined
in Definition \ref{def:abs-cont-vector-space}.
\end{remark}

The next results correspond to \cite[Lemma 3.24]{GlMR} and \cite[Lemma 3.30]{GlMR}.

\begin{lemma}\label{lem:C1-maps-act-on-AC-mfd}
Let $M$, $N$ be smooth manifolds modelled on sequentially complete locally convex spaces
$E$ and $F$, respectively. If $f\colon M\to N$ is a $C^1$-map, then $f\circ\eta\in\ACp abN$
for each $\eta\in\ACp abM$ and $p\in[1,\infty]$.
\end{lemma}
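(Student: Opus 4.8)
The plan is to use Definition~\ref{def:abs-cont-mfd} directly, the only real work being to refine the given partition so that $f\circ\eta$ maps each subinterval into a single chart domain of $N$, after which Lemma~\ref{lem:C1-maps-act-on-AC-vecsp} applies to the local representations of $f$. First I would fix $\eta\in\ACp abM$ and choose, by Definition~\ref{def:abs-cont-mfd}, a partition $\abpart$ together with charts $\varphi_j\colon U_j\to V_j$ of $M$ such that $\eta([t_{j-1},t_j])\subseteq U_j$ and $\varphi_j\circ\eta|_{[t_{j-1},t_j]}\in\ACp{t_{j-1}}{t_j}E$ for $j=1,\ldots,n$. Since $f\circ\eta\colon[a,b]\to N$ is continuous and $N$ is covered by chart domains, the preimages under $f\circ\eta$ of the chart domains form an open cover of each compact interval $[t_{j-1},t_j]$; a Lebesgue number argument then yields a common refinement $a=s_0<s_1<\ldots<s_m=b$ of $\abpart$ such that for every $i\in\{1,\ldots,m\}$ there is a chart $\psi_i\colon W_i\to Z_i$ of $N$ with $(f\circ\eta)([s_{i-1},s_i])\subseteq W_i$.

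Next I would fix $i$ and let $j=j(i)$ be such that $[s_{i-1},s_i]\subseteq[t_{j-1},t_j]$. Then $\eta([s_{i-1},s_i])\subseteq U_j\cap f^{-1}(W_i)=:O_{i,j}$, an open subset of $M$, so $\varphi_j(O_{i,j})$ is open in $E$. By Lemma~\ref{lem:AC-vecsp-locality-axiom} (restriction of an $AC_{L^p}$-function to a subinterval is again one), $\varphi_j\circ\eta|_{[s_{i-1},s_i]}$ belongs to $\ACp{s_{i-1}}{s_i}{\varphi_j(O_{i,j})}$. The local representation
\[
	g_{i,j}:=\psi_i\circ f\circ\varphi_j^{-1}\colon \varphi_j(O_{i,j})\to Z_i
\]
is a $C^1$-map between open subsets of $E$ and $F$ (because $f$ is a $C^1$-map of manifolds and the transition maps are smooth), and $E$, $F$ are sequentially complete, so Lemma~\ref{lem:C1-maps-act-on-AC-vecsp} gives
\[
	\psi_i\circ f\circ\eta|_{[s_{i-1},s_i]}
	= g_{i,j}\circ\bigl(\varphi_j\circ\eta|_{[s_{i-1},s_i]}\bigr)
	\in\ACp{s_{i-1}}{s_i}F .
\]

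Since $f\circ\eta$ is continuous and the partition $s_0<\ldots<s_m$ together with the charts $\psi_i$ satisfies $(f\circ\eta)([s_{i-1},s_i])\subseteq W_i$ and $\psi_i\circ(f\circ\eta)|_{[s_{i-1},s_i]}\in\ACp{s_{i-1}}{s_i}F$, Definition~\ref{def:abs-cont-mfd} yields $f\circ\eta\in\ACp abN$, as claimed. The main obstacle is the refinement step that forces each subinterval's image into a single chart of $N$; once that compactness argument is in place, the rest is bookkeeping on the chart domains $O_{i,j}$ plus the two cited lemmas, and one only has to be mildly careful to record that $\varphi_j\circ\eta|_{[s_{i-1},s_i]}$ actually takes values in the open set $\varphi_j(O_{i,j})$ on which $g_{i,j}$ is defined.
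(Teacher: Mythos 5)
Your proposal is correct and follows essentially the same route as the paper's proof: refine the partition so that $f\circ\eta$ maps each subinterval into a chart domain of $N$, then apply Lemma~\ref{lem:C1-maps-act-on-AC-vecsp} to the local representation $\psi_i\circ f\circ\varphi_j^{-1}$ composed with the restricted $AC_{L^p}$-curve. Your extra care in recording that $\varphi_j\circ\eta|_{[s_{i-1},s_i]}$ takes values in the open set $\varphi_j(U_j\cap f^{-1}(W_i))$ is a welcome precision that the paper leaves implicit.
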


\begin{proof}
Consider a partition $\abpart$ and charts $\varphi_j\colon U_j\to V_j$ for $M$ such that
$\eta([t_{j-1},t_j])\subseteq U_j$ and $\varphi_j\circ\eta|_{[t_{j-1},t_j]}\in\ACp {t_{j-1}}{t_j}E$
for each $j\in\{1,\ldots,n\}$. Since $f\circ\eta|_{[t_{j-1},t_j]}$ is continuous, we find a partition
$t_{j-1}=s_0<s_1<\cdots<s_m=t_j$ and charts $\psi_i\colon P_i\to Q_i$ for $N$ such that
$f(\eta([s_{i-1},s_i]))\subseteq P_i$ for each $i\in\{1,\ldots,m\}$.
Then
	\begin{align*}
		\psi_i\circ f\circ\eta|_{[s_{i-1},s_i]} 
		= (\psi_i\circ f\circ\varphi_j^{-1})\circ(\varphi_j\circ\eta|_{[s_{i-1},s_i]}) \in\ACp {s_{i-1}}{s_i}F, 
	\end{align*}
by Remark \ref{rem:restriction-abs-cont-vecsp} and Lemma \ref{lem:C1-maps-act-on-AC-vecsp}.
Hence $f\circ\eta|_{[t_{j-1},t_j]}\in\ACp {t_{j-1}}{t_j}N$ for each $j\in\{1,\ldots,n\}$, whence
$f\circ\eta\in\ACp abN$.
\end{proof}

\begin{lemma}\label{lem:double-push-forward-AC-vecsp-mfd}
Let $E_1$, $E_2$ and $F$ be sequentially complete locally convex spaces. Let
$M$ be a smooth manifold modelled on $E_1$ and $V\subseteq E_2$ be an open subset.
If $f\colon M\times V\to F$ is a $C^{k+2}$-map and $\zeta\in\ACp abM$ for $p\in[1,\infty]$, then
	\begin{align}
		\ACp abV \to \ACp abF,\quad
		\eta\mapsto f\circ(\zeta,\eta)
	\end{align}
is a $C^k$-map.
\end{lemma}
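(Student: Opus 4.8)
The plan is to use the locality Lemma~\ref{lem:AC-vecsp-locality-axiom} to reduce to the case in which $M$ is an open subset of $E_1$, and then to differentiate $\eta\mapsto f\circ(\zeta,\eta)$ componentwise with respect to the isomorphism of Definition~\ref{def:abs-cont-vector-space}, identifying the derivative that occurs with a pushforward operator covered by Proposition~\ref{prop:double-push-forward-Lp}.

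First I would fix a partition $\abpart$ together with charts $\varphi_j\colon U_j\to V_j$ of $M$ such that $\zeta([t_{j-1},t_j])\subseteq U_j$ and $\varphi_j\circ\zeta|_{[t_{j-1},t_j]}\in\ACp{t_{j-1}}{t_j}{E_1}$, as provided by Definition~\ref{def:abs-cont-mfd}. The restriction map $\ACp abV\to\ACp{t_{j-1}}{t_j}{V}$ is continuous linear by Lemma~\ref{lem:AC-vecsp-locality-axiom}, and
\[
	f\circ(\zeta,\eta)|_{[t_{j-1},t_j]}
	=\bigl(f\circ(\varphi_j^{-1}\times\id_V)\bigr)\circ\bigl(\varphi_j\circ\zeta|_{[t_{j-1},t_j]},\ \eta|_{[t_{j-1},t_j]}\bigr)
\]
with $f\circ(\varphi_j^{-1}\times\id_V)\colon V_j\times V\to F$ again $C^{k+2}$; hence it suffices to treat each subinterval separately. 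The resulting pieces agree at the partition points, so the assembled map $\ACp abV\to\prod_{j=1}^n\ACp{t_{j-1}}{t_j}{F}$ takes values in the closed image of the embedding from Lemma~\ref{lem:AC-vecsp-locality-axiom}, and precomposing its inverse (continuous linear, hence smooth) gives both the $C^k$-statement and well-definedness into $\ACp abF$. Thus we may assume $M=E_1$, i.e.\ $f\colon W\times V\to F$ is $C^{k+2}$ for an open subset $W\subseteq E_1$ and $\zeta\in\ACp abW$ is fixed; write $\zeta'=[\sigma]$.

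By Remark~\ref{rem:AC-vecsp-prod} we have $(\zeta,\eta)\in\ACp ab{W\times V}$ with $(\zeta,\eta)'=(\zeta',\eta')$, so Lemma~\ref{lem:C1-maps-act-on-AC-vecsp} applied to the $C^1$-map $f$ yields $f\circ(\zeta,\eta)\in\ACp abF$ together with the chain-rule identity $(f\circ(\zeta,\eta))'=[df\circ((\zeta,\eta),(\sigma,\gamma))]$ whenever $\eta'=[\gamma]$, see~(\ref{eq:AC-vecsp-chain-rule}). By Definition~\ref{def:abs-cont-vector-space} it then remains to check that $\eta\mapsto f(\zeta(a),\eta(a))\in F$ and $\eta\mapsto(f\circ(\zeta,\eta))'\in\Leb pabF$ are $C^k$. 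The first is the composition of the smooth evaluation $\ev_a\colon\ACp abV\to V$ (Remark~\ref{rem:eval-smooth-AC-vecsp}, using Remark~\ref{rem:open-in-AC-vecsp}) with the $C^{k+2}$-map $v\mapsto f(\zeta(a),v)$, hence $C^k$. For the second, note that $df\colon(W\times V)\times(E_1\times E_2)\to F$ is $C^{k+1}$ and linear in its second argument, so by Proposition~\ref{prop:double-push-forward-Lp} the pushforward $\Theta_{df}\colon C([a,b],W\times V)\times\Leb pab{E_1\times E_2}\to\Leb pabF$ is $C^k$; under the identification $\Leb pab{E_1\times E_2}\cong\Leb pab{E_1}\times\Leb pab{E_2}$ from Remark~\ref{rem:Lp-product}, the second component equals the composition of $\Theta_{df}$ with the map $\eta\mapsto\bigl((\zeta,\eta),(\zeta',\eta')\bigr)$, which is affine with continuous linear homogeneous part (by Lemma~\ref{lem:AC-vecsp-top-embedding} and continuity of $\eta\mapsto\eta'$), hence smooth. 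Consequently the second component is $C^k$, and the lemma follows.

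The step I expect to be the main obstacle is the correct identification of $(f\circ(\zeta,\eta))'$: one has to apply the chain rule to the \emph{combined} curve $(\zeta,\eta)\in\ACp ab{W\times V}$ and then recognise the outcome as $\Theta_{df}$ evaluated at arguments that depend smoothly on $\eta$, with $\zeta$ and $\zeta'$ entering only as fixed parameters. Once this is set up, what remains is a routine assembly of previously established continuity and smoothness statements together with the locality Lemma~\ref{lem:AC-vecsp-locality-axiom}.
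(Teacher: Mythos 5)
Your argument is correct and follows essentially the same route as the paper: reduce via the locality embedding of Lemma~\ref{lem:AC-vecsp-locality-axiom} and the charts $\varphi_j$ to the case of an open subset of $E_1$, then handle the push-forward by $f\circ(\varphi_j^{-1}\times\id_V)$. The only difference is that the paper concludes by citing Proposition~\ref{prop:push-forward-AC-vecsp} applied to the smooth affine curve $\eta\mapsto(\varphi_j\circ\zeta|_{[t_{j-1},t_j]},\eta|_{[t_{j-1},t_j]})$, whereas you inline that proposition's proof (splitting via $\Phi$ into the evaluation component and the derivative component, the latter handled by the chain rule~(\ref{eq:AC-vecsp-chain-rule}) and Proposition~\ref{prop:double-push-forward-Lp}), which amounts to the same argument.
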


\begin{proof}
Since $(\zeta,\eta)\in\ACp ab{M\times V}$, the above map is defined
by Lemma \ref{lem:C1-maps-act-on-AC-mfd}; it will be $C^k$ if for a partition $\abpart$ for $\zeta$
(as in Definition \ref{def:abs-cont-mfd}) the function
	\begin{align*}
		\ACp abV&\to\prod_{j=1}^n \ACp {t_{j-1}}{t_j}F,\\
		\eta&\mapsto \left( f\circ(\zeta,\eta)|_{[t_{j-1},t_j]}\right)_{j=1,\ldots,n}
	\end{align*}
is $C^k$ (where we use the topological embedding with 
closed image on the space $\ACp abF$ as in Lemma \ref{lem:AC-vecsp-locality-axiom}).
This will hold if every component
	\begin{align}\label{eq:double-push-forward-help}
		\ACp abV\to\ACp {t_{j-1}}{t_j}F,\quad
		\eta\mapsto f\circ(\zeta,\eta)|_{[t_{j-1},t_j]}
	\end{align}
is $C^k$.

Now, given charts $\varphi\colon U_j\to V_j$ for $M$ with $\zeta([t_{j-1},t_j])\subseteq U_j$,
for every $j\in\{1,\ldots,n\}$, the function
	\begin{align*}
		\ACp abV\to\ACp{t_{j-1}}{t_j}{V_j\times V},\quad
		\eta\mapsto\left(\varphi_j\circ\zeta|_{[t_{j-1},t_j]} , \eta|_{[t_{j-1},t_j]}\right)
	\end{align*}
is smooth by Lemma \ref{lem:AC-vecsp-locality-axiom} (identifying $\ACp{t_{j-1}}{t_j}{V_j\times V}$
with the product $\ACp {t_{j-1}}{t_j}{V_j}\times\ACp {t_{j-1}}{t_j}V$, see Remark \ref{rem:AC-vecsp-prod} ). 
As the composition $f\circ(\varphi_j^{-1}\times\id_V)\colon V_j\times V\to F$ is $C^{k+2}$, by
Proposition \ref{prop:push-forward-AC-vecsp} the function
	\begin{align*}
		\ACp abV&\to\ACp {t_{j-1}}{t_j}F, \\
		\eta&\mapsto \left( f\circ(\varphi_j^{-1}\times\id_V)
		\circ(\varphi_j\circ\zeta|_{[t_{j-1},t_j]} , \eta|_{[t_{j-1},t_j]}\right)\\
		&= f\circ(\zeta,\eta)|_{[t_{j-1},t_j]}
	\end{align*}
is $C^k$. Therefore, the function in (\ref{eq:double-push-forward-help}) is $C^k$ and the
proof is finished.
\end{proof}

\subsection{Lie group-valued \emph{AC}-functions}

We consider smooth Lie groups $G$ modelled on locally convex spaces $E$, that is, $G$ is
a group endowed with a 
smooth manifold structure modelled on $E$ such that the group multiplication $m_G\colon G\times G\to G$
and the inversion $j_G\colon G$ are smooth functions. We will always denote the identity element of $G$
by $e_G$ and the Lie group by $\mathfrak{g}:=T_{e_G}G\cong E$.

Similar to \cite[Proposition 4.2]{GlMR}, we endow $\ACp abG$ with a Lie group
structure for any Lie group $G$ modelled on a sequentially complete locally
convex space, after recalling the following fact.

\begin{remark}\label{rem:Lie-group-local}
Let $G$ be a group, $U\subseteq G$ be a symmetric subset containing the identity element of $G$.
Assume that $U$ is endowed with a smooth manifold structure modelled on a locally convex space $E$
such that the inversion $U\to U, x\to x$ on $U$ is smooth, the subset $U_m:=\{(x,y)\in U\times U : xy\in U\}$
is open in $U \times U$ and the multiplication $U_m\to U, (x,y)\mapsto xy$ is smooth on $U_m$.
Further, assume that for each $g\in G$, there exists an open identity neighborhood $W\subseteq U$
such that $gWg^{-1}\subseteq U$ and $W\to U, x\mapsto gxg^{-1}$ is smooth.
Then $G$ can be endowed with a unique smooth manifold structure modelled on $E$
such that $G$ becomes a smooth Lie group and $U$ with the given manifold structure becomes
an open smooth submanifold.
\end{remark}

\begin{proposition}\label{prop:abs-cont-Lie-gr}
Let $G$ be a smooth Lie group modelled on a sequentially complete locally convex space $E$,
let $p\in[1,\infty]$. Then
there exists a unique Lie group structure on $\ACp abG$ such that for each open symmetric 
$e_G$-neighborhood $U\subseteq G$ the subset $\ACp abU$ is open in $\ACp abG$ and
such that each
	\begin{align*}
		\ACp ab\varphi\colon\ACp abU\to\ACp abV,\quad
		\eta\mapsto\varphi\circ\eta
	\end{align*}
is a smooth diffeomorphism for every chart $\varphi\colon U\to V$ for $G$.
\end{proposition}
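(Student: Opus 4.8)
The plan is to produce the Lie group structure by means of Remark~\ref{rem:Lie-group-local}, applied to the group $\mathcal{G}:=\ACp abG$ equipped with the pointwise operations $(\eta\zeta)(t):=m_G(\eta(t),\zeta(t))$ and $\eta^{-1}(t):=j_G(\eta(t))$. First I would check that $\mathcal{G}$ really is a group: for $\eta,\zeta\in\mathcal{G}$ one has $(\eta,\zeta)\in\ACp ab{G\times G}$ (using product charts), so $\eta\zeta=m_G\circ(\eta,\zeta)$ and $\eta^{-1}=j_G\circ\eta$ lie in $\mathcal{G}$ by Lemma~\ref{lem:C1-maps-act-on-AC-mfd}. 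Then I fix a chart $\varphi\colon U\to V$ of $G$ with $\varphi(e_G)=0$ and, after replacing $U$ by $U\cap j_G(U)$, with $U$ symmetric, so that $\varphi\circ j_G\circ\varphi^{-1}\colon V\to V$ is a well-defined smooth map. Put $\mathcal{U}:=\ACp abU$. By Remark~\ref{rem:open-in-AC-vecsp}, $\ACp abV$ is open in $\ACp abE$, and I use $\ACp ab\varphi\colon\mathcal{U}\to\ACp abV$ as a global chart turning $\mathcal{U}$ into a smooth manifold modelled on $\ACp abE$.

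Next I would verify the local conditions of Remark~\ref{rem:Lie-group-local}. In the chart $\ACp ab\varphi$ the inversion on $\mathcal{U}$ becomes $\psi\mapsto(\varphi\circ j_G\circ\varphi^{-1})\circ\psi=\ACp ab{\varphi\circ j_G\circ\varphi^{-1}}(\psi)$, which is smooth by Proposition~\ref{prop:push-forward-AC-vecsp}. For multiplication, $\mathcal{U}_m:=\{(\eta,\zeta)\in\mathcal{U}\times\mathcal{U}:\eta\zeta\in\mathcal{U}\}$ is the preimage of the open set $C([a,b],U)$ under the map $(\eta,\zeta)\mapsto m_G\circ(\eta,\zeta)$ into $C([a,b],G)$, which is continuous by Lemma~\ref{lem:AC-vecsp-top-embedding} together with the continuity of $C([a,b],U\times U)\to C([a,b],G)$, $\sigma\mapsto m_G\circ\sigma$; hence $\mathcal{U}_m$ is open. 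Setting $\mu:=\varphi\circ m_G\circ(\varphi^{-1}\times\varphi^{-1})$, a smooth map on the open subset $D:=\{(x,y)\in V\times V:m_G(\varphi^{-1}(x),\varphi^{-1}(y))\in U\}$ of $E\times E$, the multiplication reads $(\psi,\omega)\mapsto\mu\circ(\psi,\omega)$ in charts; identifying $\ACp ab{E\times E}$ with $\ACp abE\times\ACp abE$ (Remark~\ref{rem:AC-vecsp-prod}) and noting that $\ACp abD$ is open (Remark~\ref{rem:open-in-AC-vecsp}), this is $\ACp ab\mu$ restricted to $\ACp abD$, hence smooth by Proposition~\ref{prop:push-forward-AC-vecsp}.

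The remaining condition on conjugation is the step I expect to be the main obstacle, since for a general $g\in\mathcal{G}$ the curve $g$ need not be captured by a single chart of $G$. Here I would exploit the compactness of $g([a,b])$: the set $\{(h,x)\in G\times G:hxh^{-1}\in U\}$ is open and contains $g([a,b])\times\{e_G\}$, so the Wallace Lemma yields an open symmetric $e_G$-neighbourhood $W_0\subseteq U$ with $hW_0h^{-1}\subseteq U$ for all $h\in g([a,b])$; set $\mathcal{W}:=\ACp ab{W_0}$, an open $e_{\mathcal{G}}$-neighbourhood contained in $\mathcal{U}$. Choosing a partition $\abpart$ and charts $\varphi_j\colon U_j\to V_j$ of $G$ with $g([t_{j-1},t_j])\subseteq U_j$, the elements $\zeta_j:=\varphi_j\circ g|_{[t_{j-1},t_j]}\in\ACp{t_{j-1}}{t_j}{V_j}$ are fixed, and with the smooth map $\Phi_j(u,v):=\varphi\bigl(\varphi_j^{-1}(u)\,\varphi^{-1}(v)\,\varphi_j^{-1}(u)^{-1}\bigr)$ defined on the open subset $D_j\subseteq E\times E$ where its argument lands in $U$, one has $\varphi\circ(g\eta g^{-1})|_{[t_{j-1},t_j]}=\Phi_j\circ(\zeta_j,\varphi\circ\eta|_{[t_{j-1},t_j]})$ for $\eta\in\mathcal{W}$. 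By Proposition~\ref{prop:push-forward-AC-vecsp} applied on $\ACp{t_{j-1}}{t_j}{D_j}$ (to accommodate that $D_j$ is not a product, in the spirit of Lemma~\ref{lem:double-push-forward-AC-vecsp-mfd}), together with the continuity of the restriction maps in Lemma~\ref{lem:AC-vecsp-locality-axiom}, each assignment $\eta\mapsto\varphi\circ(g\eta g^{-1})|_{[t_{j-1},t_j]}$ is smooth on $\mathcal{W}$; feeding these into the embedding $\Psi$ of Lemma~\ref{lem:AC-vecsp-locality-axiom} shows that $\eta\mapsto\varphi\circ(g\eta g^{-1})$ is a smooth map $\mathcal{W}\to\ACp abV$, so in particular $g\mathcal{W}g^{-1}\subseteq\mathcal{U}$ and conjugation by $g$ is smooth in the chart $\ACp ab\varphi$. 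The recurring delicate point is that $\mu$ and the $\Phi_j$ live only on open, non-product subsets of $E\times E$, which is why the openness statements of Remark~\ref{rem:open-in-AC-vecsp} are used throughout.

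All hypotheses of Remark~\ref{rem:Lie-group-local} being met, $\mathcal{G}=\ACp abG$ obtains a unique Lie group structure modelled on $\ACp abE$ for which $\mathcal{U}$, with the structure above, is an open submanifold; in particular $\ACp ab\varphi$ is a chart of $\mathcal{G}$. For an arbitrary open symmetric $e_G$-neighbourhood $U'$ and chart $\varphi'\colon U'\to V'$ of $G$, I would finally deduce that $\ACp ab{U'}$ is open — it is the preimage of $C([a,b],U')$ under the continuous inclusion $\ACp abG\to C([a,b],G)$, whose continuity is checked in charts via Lemma~\ref{lem:AC-vecsp-top-embedding} — and that $\ACp ab{\varphi'}$ is a smooth diffeomorphism, since on overlaps $\ACp ab{\varphi'}\circ(\ACp ab\varphi)^{-1}=\ACp ab{\varphi'\circ\varphi^{-1}}$ is a diffeomorphism between open subsets of $\ACp abE$ by Proposition~\ref{prop:push-forward-AC-vecsp}; the same observation also yields the uniqueness assertion.
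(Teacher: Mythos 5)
Your proposal is correct and follows essentially the same route as the paper: pointwise group operations, the local description of Lie groups (Remark~\ref{rem:Lie-group-local}) applied to the global chart $\ACp ab\varphi$ on $\ACp abU$, the Wallace Lemma together with a manifold-valued push-forward for the conjugation condition, and chart-transition maps for uniqueness. The only cosmetic differences are that you re-derive the conjugation smoothness with an explicit partition instead of citing Lemma~\ref{lem:double-push-forward-AC-vecsp-mfd} directly, and that in the final paragraph the diffeomorphism property of $\ACp ab{\varphi'}$ on all of $\ACp ab{U'}$ (rather than merely on $\ACp ab{U'\cap U}$) should be extracted from the uniqueness of the structure built from $(U',\varphi')$ rather than from the overlap computation alone --- a point your closing remark already acknowledges.
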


\begin{proof}
\emph{Step 1: $\ACp abG$ is a group.}

As $m_G$ and $j_G$
are smooth, we have
$m_G\circ (\eta,\xi), j_G\circ\eta\in\ACp abG$ for all $\eta$, $\xi\in\ACp abG$, 
by Lemma \ref{lem:C1-maps-act-on-AC-mfd}
(identifying $\ACp ab{G\times G}$ with $\ACp abG\times\ACp abG$). Then
$\tilde{G}:=\ACp abG$ is a group with multiplication
	\begin{align*}
		m_{\tilde{G}}:=\ACp ab{m_G}\colon \tilde{G}\times\tilde{G}\to\tilde{G},\quad
		(\eta,\xi)\mapsto m_G\circ(\eta,\xi)=:\eta\cdot\xi,
	\end{align*}
inversion
	\begin{align*}
		j_{\tilde{G}}:=\ACp ab{j_G}\colon \tilde{G}\to\tilde{G},\quad 
		\eta\mapsto j_G\circ\eta=:\eta^{-1}
	\end{align*}
and identity element $e_{\tilde{G}}\colon t\mapsto e_G$.

\emph{Step 2: Existence of a Lie group structure on $\ACp abG$.}

Consider an open symmetric $e_G$-neighborhood $U\subseteq G$ and a chart $\varphi\colon U\to V$.
As $\tilde{V}:=\ACp abV$ is open in $\ACp abE$ (see Remark \ref{rem:open-in-AC-vecsp}), we endow 
the symmetric subset $\tilde{U}:=\ACp abU := \{\eta\in\ACp abG : \eta([a,b])\subseteq U\}$ with the
$C^\infty$-manifold structure turning the bijection
	\begin{align*}
		\tilde{\varphi}:=\ACp ab\varphi\colon \tilde{U}\to\tilde{V},\quad \eta\to\varphi\circ\eta
	\end{align*}
into a global chart (the map is defined by Lemma \ref{lem:C1-maps-act-on-AC-mfd}). 
Obviously, $e_{\tilde{G}}\in\tilde{U}$.

Further, by Lemma \ref{lem:C1-maps-act-on-AC-vecsp}, the function
	\begin{align*}
		\ACp ab{\varphi\circ j_G|_U \circ \varphi^{-1}}\colon \tilde{V}\to\tilde{V},
		\quad \eta\mapsto (\varphi\circ j_G|_U \circ \varphi^{-1})\circ\eta
	\end{align*}
is smooth. Thus, writing
	\begin{align*}
		\tilde{U}\to\tilde{U},
		\quad \eta&\mapsto (\tilde{\varphi}^{-1}\circ \ACp ab{\varphi\circ j_G|_U \circ \varphi^{-1}}\circ\tilde{\varphi})(\eta)\\
		&=\varphi^{-1}\circ\varphi\circ j_G|_U \circ \varphi^{-1}\circ\varphi\circ\eta\\
		&=j_G\circ\eta,
	\end{align*}
we see that the inversion on $\tilde{U}$ is smooth.

Now, consider the open subset $U_m:= \{(x,y)\in U\times U : xy\in U\}$ of $U\times U$.
As $V_m := (\varphi\times\varphi)(U_m)$ is open in $E\times E$, the set 
$\tilde{V}_m := \ACp ab{V_m}$ is open in $\ACp abE\times\ACp abE$, whence
$\tilde{U}_m := (\tilde{\varphi}^{-1}\times\tilde{\varphi}^{-1})(\tilde{V}_m)$ is open in $\tilde{U}\times\tilde{U}$.
Again, by Lemma \ref{lem:C1-maps-act-on-AC-vecsp}, the function
	\begin{align*}
		\ACp ab{\varphi\circ m_G\circ(\varphi^{-1}\times\varphi^{-1})|_{V_m}}\colon\tilde{V}_m&\to \tilde{V},\\
		\eta&\mapsto (\varphi\circ m_G\circ(\varphi^{-1}\times\varphi^{-1})|_{V_m})\circ\eta
	\end{align*}
is smooth. Therefore
	\begin{align*}
		\tilde{U}_m&\to\tilde{U},\\
		(\eta,\xi)&\mapsto (\tilde{\varphi}^{-1}\circ\ACp ab{\varphi\circ m_G\circ(\varphi^{-1}\times\varphi^{-1})|_{V_m}}
		\circ(\tilde{\varphi}\times\tilde{\varphi}))(\eta,\xi)\\
		&=\varphi^{-1}\circ\varphi\circ m_G\circ(\varphi^{-1}\times\varphi^{-1})|_{V_m}\circ(\varphi\times\varphi)\circ(\eta,\xi)\\
		&=m_G\circ(\eta,\xi),
	\end{align*}
which is the multiplication on $\tilde{U}_m$, is smooth.

Finally, fix some $\eta\in\tilde{G}$ and write $K:=\im(\eta)\subseteq G$. As the function 
	\begin{align*}
		h\colon G\times G\to G, (x,y)\mapsto xyx^{-1}
	\end{align*}
is smooth and $h({K}\times\{e_G\}) = \{e_G\}\subseteq U$,
the compact set $K\times\{e_G\}$ is a subset of the open set $h^{-1}(U)\subseteq G\times G$. By
the Wallace Lemma, there are open subsets $W_K$, $W$ of $G$ such that
$K\times\{e_G\}\subseteq W_K\times W\subseteq h^{-1}(U)$.
We may assume $W\subseteq U$, then we see that $\tilde{W}:=\ACp abW$ is open in $\tilde{U}$
and for each $\xi\in\tilde{W}$ we have
	\begin{align*}
		\eta\cdot\xi\cdot\eta^{-1}=h\circ(\eta,\xi)\in\tilde{U}
	\end{align*}
by Lemma \ref{lem:C1-maps-act-on-AC-mfd}. Using Lemma \ref{lem:double-push-forward-AC-vecsp-mfd},
we see that the function
	\begin{align*}
		\ACp ab{\varphi(W)}&\to\tilde{V},\\
		\xi&\mapsto (\varphi\circ h\circ (\id_{W_K}\times\varphi^{-1}|_{\varphi(W)})\circ(\eta,\xi)
	\end{align*}
is smooth, whence
	\begin{align*}
		\tilde{W}\to\tilde{U},\quad 
		\xi&\mapsto
		\varphi^{-1}\circ (\varphi\circ h\circ (\id_{W_K}\times\varphi^{-1}|_{\varphi(W)})\circ (\eta, \varphi\circ\xi)\\
		&=h\circ(\eta,\xi)\\
		&=\eta\cdot\xi\cdot\eta^{-1}
	\end{align*}
is smooth. 

Consequently, by Remark \ref{rem:Lie-group-local}, there exists a unique Lie group structure on $\tilde{G}$
turning $\tilde{U}$ into a smooth open submanifold and $\tilde{\varphi}$ into a $\tilde{G}$-chart around $e_{\tilde{G}}$.

\emph{Step 3: Uniqueness of the Lie group structure on $\ACp abG$.}

Let $U'\subseteq G$ be an open symmetric $e_G$-neighborhood and $\varphi'\colon U'\to V'$
be a $G$-chart around $e_G$. Denote by $\tilde{G'}$ the group $\ACp abG$ endowed with the
Lie group structure turning $\tilde{U'}:=\ACp ab{U'}$ into an open submanifold and 
$\tilde{\varphi'}\colon \tilde{U'}\to \ACp ab{V'}$ into a chart (constructed as in Step $2$). We 
show that both identity maps $\id\colon \tilde{G'}\to\tilde{G}$ and $\id\colon\tilde{G}\to\tilde{G'}$ are
continuous, that is, both Lie group structures coincide.

The subset $U'\cap U$ is open in $U'$, therefore $\varphi'(U'\cap U)$ is open in $V'$, thus
$\ACp ab{\varphi'(U'\cap U)}$ is open in $\ACp ab{V'}$, and consequently 
$\tilde{U'}\cap\tilde{U}=\tilde{\varphi'}^{-1}(\ACp ab{\varphi'(U'\cap U)})$ is open in $\tilde{G'}$.
Writing
	\begin{align*}
		\id_{\tilde{U'}\cap\tilde{U}} 
		= \tilde{\varphi}^{-1}\circ\ACp ab{\varphi\circ\varphi'^{-1}|_{U'\cap U}}\circ\tilde{\varphi'}|_{\tilde{U'}\cap\tilde{U}}
		\colon \tilde{U'}\cap\tilde{U}\to\tilde{G}
	\end{align*}
and using Lemma \ref{lem:C1-maps-act-on-AC-vecsp}, we see that $\id\colon\tilde{G'}\to\tilde{G}$ is
continuous on the open identity neighborhood $\tilde{U'}\cap\tilde{U}$, hence continuous. In the same
way, we show that also $\id\colon\tilde{G}\to\tilde{G'}$ is continuous, as required.
\end{proof}

See \cite[Remark 4.3]{GlMR} for the next result.

\begin{lemma}\label{lem:incl-AC-C-Lie-gr}
The inclusion map
	\begin{align*}
		\incl\colon \ACp abG\to C([a,b],G),\quad \eta\mapsto\eta
	\end{align*}
is a smooth homomorphism.
\end{lemma}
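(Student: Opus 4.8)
The plan is to verify the two parts of the claim separately: that $\incl$ is a group homomorphism, which follows immediately from the definitions, and that it is smooth, which I would reduce to smoothness on an identity neighbourhood by a standard left-translation argument. Throughout, $C([a,b],G)$ carries its Lie group structure (pointwise operations, topology of uniform convergence), with charts $C([a,b],\varphi)\colon C([a,b],U)\to C([a,b],V)$, $\eta\mapsto\varphi\circ\eta$, induced by the charts $\varphi\colon U\to V$ of $G$.

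First I would record the homomorphism property. Both the Lie group structure on $\ACp abG$ from Proposition \ref{prop:abs-cont-Lie-gr} and that on $C([a,b],G)$ have pointwise group operations, $(\eta\cdot\xi)(t)=m_G(\eta(t),\xi(t))$, $\eta^{-1}(t)=j_G(\eta(t))$, and constant identity $t\mapsto e_G$. Hence $\incl(\eta\cdot\xi)=\incl(\eta)\cdot\incl(\xi)$ for all $\eta,\xi\in\ACp abG$ and $\incl$ maps the identity to the identity, so $\incl$ is a homomorphism.

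Next I would prove that $\incl$ is smooth on an identity neighbourhood. Fix an open symmetric $e_G$-neighbourhood $U\subseteq G$ and a chart $\varphi\colon U\to V$. By Proposition \ref{prop:abs-cont-Lie-gr}, $\ACp abU$ is an open identity neighbourhood in $\ACp abG$ with chart $\ACp ab\varphi\colon\ACp abU\to\ACp abV$, while $C([a,b],U)$ is an open identity neighbourhood in $C([a,b],G)$ with chart $C([a,b],\varphi)$. Since $\incl(\ACp abU)\subseteq C([a,b],U)$, the chart representation of $\incl$ on $\ACp abU$ is
\[
 \ACp abV\longrightarrow C([a,b],V),\qquad \zeta\longmapsto\zeta,
\]
that is, the restriction to $\ACp abV$ of the inclusion $\ACp abE\to C([a,b],E)$, which is continuous and linear, hence smooth, by Lemma \ref{lem:AC-vecsp-top-embedding}. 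Thus $\incl$ is smooth on $\ACp abU$.

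Finally I would pass to global smoothness. For $g$ in either group, left translation $L_g$ is a smooth diffeomorphism, and the homomorphism property yields $\incl\circ L_{\eta_0}=L_{\incl(\eta_0)}\circ\incl$, hence $\incl=L_{\incl(\eta_0)}\circ\incl\circ L_{\eta_0^{-1}}$, for every $\eta_0\in\ACp abG$. Choosing an open neighbourhood $N$ of $\eta_0$ with $L_{\eta_0^{-1}}(N)\subseteq\ACp abU$, this exhibits $\incl|_N$ as a composition of smooth maps by the previous step, so $\incl$ is smooth near $\eta_0$; as $\eta_0$ was arbitrary, $\incl$ is a smooth homomorphism. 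I expect no real difficulty here; the only step needing a little care is the chart bookkeeping in the third paragraph — recognising the chart representation of $\incl$ as the linear inclusion of Lemma \ref{lem:AC-vecsp-top-embedding} and checking that $\incl$ indeed maps $\ACp abU$ into the chart domain $C([a,b],U)$.
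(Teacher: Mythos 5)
Your proposal is correct and follows essentially the same route as the paper: identify the chart representation of $\incl$ over an identity neighbourhood with the restriction of the continuous linear inclusion from Lemma \ref{lem:AC-vecsp-top-embedding}, then propagate smoothness by translation using the homomorphism property. The paper leaves the translation step implicit ("hence the group homomorphism $\incl$ is smooth"), whereas you spell it out; otherwise the arguments coincide.
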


\begin{proof}
Let $U\subseteq G$ be an open identity neighborhood, $\varphi\colon U\to V$ be
a chart for $G$. Then $C([a,b],\varphi)\colon C([a,b],U)\to C([a,b],V), \eta\mapsto \varphi\circ\eta$
is a chart for $C([a,b],G)$ and $\ACp ab{\varphi}\colon \ACp abU\to \ACp abV, \eta\mapsto \varphi\circ\eta$
is a chart for $\ACp abG$.
The function
	\begin{align*}
		\ACp abV&\to C([a,b],V),\\ 
		\eta&\mapsto \left(C([a,b],\varphi)\circ \incl\circ \ACp ab{\varphi}^{-1}\right)(\eta)=\eta
	\end{align*}
is smooth, being a restriction of the smooth inclusion map from Lemma \ref{lem:AC-vecsp-top-embedding}.
Hence the group homomorphism $\incl$ is smooth.
\end{proof}

Lemmas \ref{lem:eval-smooth-AC-Liegr} - \ref{lem:AC-Lie-gr-locality-axiom}
can be found in \cite[Lemma 4.8]{GlMR}.

\begin{lemma}\label{lem:eval-smooth-AC-Liegr}
For any $\alpha\in[a,b]$, the evaluation map
	\begin{align*}
		\ev_{\alpha}\colon \ACp abG\to G,\quad \eta\mapsto \eta(\alpha)
	\end{align*}
is a smooth homomorphism.
\end{lemma}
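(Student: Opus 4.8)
The plan is to check first that $\ev_\alpha$ is a group homomorphism, then to prove smoothness on an open neighbourhood of $e_{\tilde{G}}$ by reading the map in a chart, and finally to propagate smoothness to all of $\tilde{G}:=\ACp abG$ by composing with left translations.

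First I would verify the homomorphism property. By Proposition~\ref{prop:abs-cont-Lie-gr} the multiplication on $\tilde{G}$ is $\eta\cdot\xi=m_G\circ(\eta,\xi)$, acting pointwise, so $\ev_\alpha(\eta\cdot\xi)=m_G(\eta(\alpha),\xi(\alpha))=\ev_\alpha(\eta)\,\ev_\alpha(\xi)$ for all $\eta,\xi\in\tilde{G}$; in particular $\ev_\alpha(e_{\tilde{G}})=e_G$.

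Next I would show smoothness near $e_{\tilde{G}}$. Choosing an open symmetric $e_G$-neighbourhood $U\subseteq G$ with a chart $\varphi\colon U\to V$, Proposition~\ref{prop:abs-cont-Lie-gr} provides the open submanifold $\tilde{U}:=\ACp abU$ of $\tilde{G}$ and the chart $\tilde{\varphi}:=\ACp ab\varphi\colon\tilde{U}\to\ACp abV$ around $e_{\tilde{G}}$. For $\eta\in\ACp abV$ one computes $(\varphi\circ\ev_\alpha\circ\tilde{\varphi}^{-1})(\eta)=\varphi\bigl((\varphi^{-1}\circ\eta)(\alpha)\bigr)=\eta(\alpha)$, so in these charts $\ev_\alpha$ is simply the restriction to $\ACp abV$ of the evaluation map $\ACp abE\to E$, which is continuous and linear, hence smooth, by Remark~\ref{rem:eval-smooth-AC-vecsp}. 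Thus $\ev_\alpha|_{\tilde{U}}$ is smooth.

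Finally I would propagate smoothness. Fixing $\eta_0\in\tilde{G}$, the set $\eta_0\cdot\tilde{U}$ is an open neighbourhood of $\eta_0$ whose elements are exactly the $\eta_0\cdot\xi$ with $\xi\in\tilde{U}$, and $\ev_\alpha(\eta_0\cdot\xi)=\eta_0(\alpha)\,\ev_\alpha(\xi)$. Hence on $\eta_0\cdot\tilde{U}$ the map $\ev_\alpha$ factors as the diffeomorphism $\zeta\mapsto\eta_0^{-1}\cdot\zeta$ of $\eta_0\cdot\tilde{U}$ onto $\tilde{U}$, followed by the smooth map $\ev_\alpha|_{\tilde{U}}$, followed by left translation by $\eta_0(\alpha)$ in $G$ (a diffeomorphism), so it is smooth on $\eta_0\cdot\tilde{U}$; since $\eta_0$ was arbitrary, $\ev_\alpha$ is smooth. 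I expect no genuine obstacle here: the only ingredient drawn from the theory above is the smoothness of the vector-valued evaluation map (Remark~\ref{rem:eval-smooth-AC-vecsp}), and the remainder is the familiar principle that a homomorphism of Lie groups which is smooth near the identity is smooth everywhere.
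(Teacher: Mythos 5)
Your proof is correct, but it follows a genuinely different route from the paper's. The paper disposes of the lemma in two lines: it writes $\ev_\alpha$ as the composition of the smooth inclusion homomorphism $\incl\colon \ACp abG\to C([a,b],G)$ from Lemma~\ref{lem:incl-AC-C-Lie-gr} with the (well-known) smooth evaluation map on $C([a,b],G)$, so all the work is already packaged in that earlier lemma. You instead argue from scratch: you verify the homomorphism property pointwise, compute $\ev_\alpha$ in the charts $\tilde{\varphi}=\ACp ab\varphi$ and $\varphi$ supplied by Proposition~\ref{prop:abs-cont-Lie-gr} to reduce smoothness near $e_{\tilde G}$ to the continuous linear evaluation $\ACp abE\to E$ of Remark~\ref{rem:eval-smooth-AC-vecsp}, and then propagate smoothness by left translations via $\ev_\alpha(\eta_0\cdot\xi)=\eta_0(\alpha)\,\ev_\alpha(\xi)$. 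Both arguments are sound; yours is self-contained modulo the Lie group structure on $\ACp abG$ and avoids invoking Lemma~\ref{lem:incl-AC-C-Lie-gr}, at the cost of being longer, while the paper's buys brevity by reusing machinery it has already set up (indeed, the paper's proof of Lemma~\ref{lem:incl-AC-C-Lie-gr} performs essentially the same chart computation you do, just once and for all for the inclusion map rather than for the evaluation). The one point worth making explicit in your write-up is that the chart identity $(\varphi\circ\ev_\alpha\circ\tilde{\varphi}^{-1})(\eta)=\eta(\alpha)$ presupposes $\ev_\alpha(\tilde U)\subseteq U$, which holds since elements of $\tilde U=\ACp abU$ take all their values in $U$; with that noted, the argument is complete.
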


\begin{proof}
The function is a composition of the smooth inclusion map from Lemma \ref{lem:incl-AC-C-Lie-gr}
and the smooth evaluation map on $C([a,b],G)$, hence smooth.
\end{proof}

\begin{lemma}\label{lem:affine-linear-AC-liegr}
Let $\eta\in\ACp abG$ and for $a\leq \alpha<\beta\leq b$ define
	\begin{align*}
		f\colon[c,d]\to[a,b],\quad f(t):=\alpha + \frac{t-c}{d-c}(\beta-\alpha).
	\end{align*}
Then $\eta\circ f\in\ACp cdG$.

Furthermore, the function
	\begin{align*}
		AC_{L^p}(f,G)\colon \ACp abG\to\ACp cdG,\quad \eta\mapsto \eta\circ f
	\end{align*}
is a smooth homomorphism.
\end{lemma}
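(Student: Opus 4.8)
The plan is to reduce everything to the vector-valued statement Lemma~\ref{lem:affine-linear-AC-vecsp}, following the template of Lemma~\ref{lem:incl-AC-C-Lie-gr} and Proposition~\ref{prop:abs-cont-Lie-gr}. Three things have to be checked: that $\eta\circ f$ actually lies in $\ACp cdG$ (so the map is defined), that $AC_{L^p}(f,G)$ is a group homomorphism, and that it is smooth.

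For well-definedness I would start from a partition $\abpart$ and charts $\varphi_j\colon U_j\to V_j$ witnessing $\eta\in\ACp abG$ as in Definition~\ref{def:abs-cont-mfd}. Since $f$ is an increasing affine bijection of $[c,d]$ onto $[\alpha,\beta]$, I adjoin $\alpha$ and $\beta$ to the partition points lying in $[\alpha,\beta]$, obtaining $\alpha=s_0<\cdots<s_m=\beta$ with each $[s_{i-1},s_i]$ contained in some cell $[t_{j(i)-1},t_{j(i)}]$; then Lemma~\ref{lem:abs-cont-mfd-each-chart} gives $\varphi_{j(i)}\circ\eta|_{[s_{i-1},s_i]}\in\ACp{s_{i-1}}{s_i}E$. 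Pulling back by $f$, the points $r_i:=f^{-1}(s_i)$ form a partition $c=r_0<\cdots<r_m=d$, each restriction $f|_{[r_{i-1},r_i]}\colon[r_{i-1},r_i]\to[s_{i-1},s_i]$ is of the affine shape treated in Lemma~\ref{lem:affine-linear-AC-vecsp}, and hence
	\begin{align*}
		\varphi_{j(i)}\circ(\eta\circ f)|_{[r_{i-1},r_i]}
		= \bigl(\varphi_{j(i)}\circ\eta|_{[s_{i-1},s_i]}\bigr)\circ f|_{[r_{i-1},r_i]}
		\in\ACp{r_{i-1}}{r_i}E .
	\end{align*}
Since $(\eta\circ f)([r_{i-1},r_i])=\eta([s_{i-1},s_i])\subseteq U_{j(i)}$ and $\eta\circ f$ is continuous, Definition~\ref{def:abs-cont-mfd} yields $\eta\circ f\in\ACp cdG$. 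The homomorphism property is then immediate: evaluating pointwise, $(\eta\cdot\xi)\circ f=(\eta\circ f)\cdot(\xi\circ f)$ because both are $m_G$ applied to $(\eta\circ f,\xi\circ f)$, and the constant path $e_{\ACp abG}$ is sent to $e_{\ACp cdG}$.

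For smoothness, since $AC_{L^p}(f,G)$ is a homomorphism of Lie groups it suffices to prove smoothness on an identity neighbourhood (as in Lemma~\ref{lem:incl-AC-C-Lie-gr}). I would fix an open symmetric $e_G$-neighbourhood $U\subseteq G$ with chart $\varphi\colon U\to V$; for $\eta\in\ACp abU$ one has $(\eta\circ f)([c,d])=\eta([\alpha,\beta])\subseteq U$, so $AC_{L^p}(f,G)$ maps $\ACp abU$ into $\ACp cdU$. Using the charts $\ACp ab\varphi$ and $\ACp cd\varphi$ supplied by Proposition~\ref{prop:abs-cont-Lie-gr}, the associated chart representation is
	\begin{align*}
		\ACp cd\varphi\circ AC_{L^p}(f,G)\circ\bigl(\ACp ab\varphi\bigr)^{-1}\colon\ACp abV\to\ACp cdV,\qquad
		\zeta\mapsto\zeta\circ f ,
	\end{align*}
which is precisely the restriction to the open subset $\ACp abV$ of the continuous linear — hence smooth — map $AC_{L^p}(f,E)\colon\ACp abE\to\ACp cdE$ of Lemma~\ref{lem:affine-linear-AC-vecsp}. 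Thus $AC_{L^p}(f,G)$ is smooth near $e_{\ACp abG}$, and therefore a smooth homomorphism.

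The only point needing care is the partition bookkeeping in the first step: one must see that the pulled-back points $r_i=f^{-1}(s_i)$ cut $[c,d]$ into subintervals each of which $\eta\circ f$ maps into a single chart domain. This is routine, and the analytic substance is carried entirely by the already-established Lemmas~\ref{lem:abs-cont-mfd-each-chart} and~\ref{lem:affine-linear-AC-vecsp}, so no genuine obstacle is expected.
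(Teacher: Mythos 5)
Your proposal is correct and follows essentially the same route as the paper: well-definedness via Lemma~\ref{lem:abs-cont-mfd-each-chart} combined with the vector-valued Lemma~\ref{lem:affine-linear-AC-vecsp}, and smoothness by checking that the chart representation $\zeta\mapsto\zeta\circ f$ is a restriction of the continuous linear map $AC_{L^p}(f,E)$. The only cosmetic difference is the partition bookkeeping: the paper chooses a fresh partition of $[c,d]$ adapted to the continuity of $\eta\circ f$ and pushes it forward, whereas you pull back a refinement of the witnessing partition of $\eta$; both work equally well.
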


\begin{proof}
As $\eta\circ f$ is a continuous curve, there exists a partition $c=t_0<t_1<\ldots<t_n=d$
and for every $j\in\{1,\ldots,n\}$ there is a chart $\varphi_j\colon U_j\to V_j$ for $G$
such that $\eta(f([t_{j-1},t_j]))\subseteq U_j$. But $f([t_{j-1},t_j]) = [f(t_{j-1}),f(t_j)]$
is an interval and from Lemma \ref{lem:abs-cont-mfd-each-chart} follows that
	\begin{align*}
		\varphi_j\circ\eta|_{[f(t_{j-1}),f(t_j)]}\in\ACp {f(t_{j-1})}{f(t_j)}{V_j}.
	\end{align*}
By Lemma \ref{lem:affine-linear-AC-vecsp}, we have 
	\begin{align*}
		\varphi_j\circ\eta\circ f|_{[t_{j-1},t_j]}\in\ACp {t_{j-1}}{t_j}{V_j},
	\end{align*}
that is, $\eta\circ f\in \ACp cdG$.

Finally, for any open identity neighborhood $U\subseteq G$ and any chart $\varphi\colon U\to V$
for $G$ the function
	\begin{align*}
		\ACp abV&\to\ACp cdV,\\
		\zeta&\mapsto (\ACp cd{\varphi}\circ AC_{L^p}(f,G)\circ \ACp ab{\varphi}^{-1})(\zeta)=\zeta\circ f
	\end{align*}
is smooth (see Lemma \ref{lem:affine-linear-AC-vecsp}), hence the group homomorphism
$AC_{L^p}(f,G)$ is smooth.
\end{proof}

\begin{lemma}\label{lem:AC-Lie-gr-locality-axiom}
Let $G$ be a Lie group modelled on a sequentially complete locally convex space $E$, let $p\in[1,\infty]$.
Then the function
	\begin{align*}
		\Gamma_G\colon \ACp abG\to \prod_{j=1}^n \ACp {t_{j-1}}{t_j}G,\quad
		\eta\mapsto \left(\eta|_{[t_{j-1},t_j]}\right)_{j=1,\ldots,n}
	\end{align*}
is a smooth homomorphism and a smooth diffeomorphism onto a Lie subgroup of 
$\prod_{j=1}^n \ACp {t_{j-1}}{t_j}G$.
\end{lemma}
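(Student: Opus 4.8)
The plan is to deduce everything from the vector-space statement Lemma~\ref{lem:AC-vecsp-locality-axiom} by passing to charts of $\ACp abG$, using Lemma~\ref{lem:affine-linear-AC-liegr} for the individual restriction maps. First I would note that for each $j$ the component $\ACp abG\to\ACp{t_{j-1}}{t_j}G$, $\eta\mapsto\eta|_{[t_{j-1},t_j]}$, is exactly the map $AC_{L^p}(f_j,G)$ of Lemma~\ref{lem:affine-linear-AC-liegr} for the affine reparametrisation $f_j\colon[t_{j-1},t_j]\to[a,b]$ associated with $[\alpha,\beta]=[t_{j-1},t_j]$ (so $f_j=\id_{[t_{j-1},t_j]}$ and $\eta\circ f_j=\eta|_{[t_{j-1},t_j]}$), hence a smooth homomorphism. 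Consequently $\Gamma_G$, being the product of these maps into the product Lie group $\prod_{j=1}^n\ACp{t_{j-1}}{t_j}G$, is a smooth homomorphism; it is injective because a continuous curve on $[a,b]$ is determined by its restrictions to the $[t_{j-1},t_j]$.

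Next I would identify the image. A tuple $(\eta_1,\dots,\eta_n)$ lies in $\im(\Gamma_G)$ if and only if $\eta_j(t_j)=\eta_{j+1}(t_j)$ for $j=1,\dots,n-1$: one direction is obvious, and conversely the glued curve is continuous and, being absolutely continuous on each $[t_{j-1},t_j]$, lies in $\ACp abG$ by Definition~\ref{def:abs-cont-mfd}. Since every $\ev_{t_j}$ is continuous (Lemma~\ref{lem:eval-smooth-AC-Liegr}), a net argument exactly as in the last paragraph of the proof of Lemma~\ref{lem:AC-vecsp-locality-axiom} shows that $\im(\Gamma_G)$ is closed in $\prod_{j=1}^n\ACp{t_{j-1}}{t_j}G$.

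To produce submanifold charts, fix a chart $\varphi\colon U\to V$ of $G$ with $U$ a symmetric $e_G$-neighbourhood; then $\prod_j\ACp{t_{j-1}}{t_j}\varphi$ is a chart of $\prod_j\ACp{t_{j-1}}{t_j}G$ about the identity, with domain $\prod_j\ACp{t_{j-1}}{t_j}U$ and range $\prod_j\ACp{t_{j-1}}{t_j}V$, and under it $\im(\Gamma_G)$ intersected with the domain corresponds precisely to $\im(\Psi)\cap\prod_j\ACp{t_{j-1}}{t_j}V$, where $\Psi$ is the embedding of Lemma~\ref{lem:AC-vecsp-locality-axiom} and $\im(\Psi)=\{(\xi_j)_j:\xi_j(t_j)=\xi_{j+1}(t_j)\ \text{for}\ j<n\}$. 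This $\im(\Psi)$ is a closed vector subspace and is even complemented: it is the kernel of the continuous projection $s\circ D$, where $D\colon\prod_j\ACp{t_{j-1}}{t_j}E\to E^{n-1}$, $(\xi_j)_j\mapsto(\xi_{j+1}(t_j)-\xi_j(t_j))_{j<n}$, and $s\colon E^{n-1}\to\prod_j\ACp{t_{j-1}}{t_j}E$ sends $(v_j)_j$ to the tuple of constant curves $(0,v_1,v_1+v_2,\dots,v_1+\cdots+v_{n-1})$. Hence $\im(\Gamma_G)$ is a closed split submanifold near the identity; left-translating this chart by an arbitrary $h\in\im(\Gamma_G)$ (a diffeomorphism of the ambient group preserving the subgroup $\im(\Gamma_G)$) exhibits it as a closed submanifold near each of its points, so $\im(\Gamma_G)$ is a Lie subgroup of $\prod_j\ACp{t_{j-1}}{t_j}G$.

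Finally, $\Gamma_G$ co-restricts to a smooth map $\ACp abG\to\im(\Gamma_G)$, since a smooth map into the ambient group whose image lies in the closed split submanifold $\im(\Gamma_G)$ is smooth into it; in the charts above this co-restriction is precisely $\Psi|_{\ACp abV}\colon\ACp abV\to\im(\Psi)\cap\prod_j\ACp{t_{j-1}}{t_j}V$, which by Lemma~\ref{lem:AC-vecsp-locality-axiom} is a homeomorphism onto its image and is linear, hence a diffeomorphism onto its image. Therefore the inverse homomorphism $\Gamma_G^{-1}$ is smooth on an identity neighbourhood, hence smooth, and $\Gamma_G$ is a smooth diffeomorphism onto the Lie subgroup $\im(\Gamma_G)$. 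I expect the only delicate point to be the bookkeeping of the third paragraph --- verifying that in the product chart $\Gamma_G$ literally becomes $\Psi$ and $\im(\Gamma_G)$ becomes $\im(\Psi)\cap\prod_j\ACp{t_{j-1}}{t_j}V$; once this identification is in place, everything follows formally from Lemma~\ref{lem:AC-vecsp-locality-axiom}, Lemma~\ref{lem:affine-linear-AC-liegr}, and the standard facts that smoothness is local, is detected by closed linear embeddings, and that a group homomorphism smooth near the identity is smooth.
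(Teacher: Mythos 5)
Your proposal is correct and takes essentially the same route as the paper: both reduce everything to the product chart $\prod_j\ACp{t_{j-1}}{t_j}\varphi$, under which $\Gamma_G$ becomes the linear embedding $\Gamma_E$ of Lemma~\ref{lem:AC-vecsp-locality-axiom} and $\im(\Gamma_G)$ becomes $\im(\Gamma_E)\cap\prod_j\ACp{t_{j-1}}{t_j}V$. The extra details you supply (the components as instances of Lemma~\ref{lem:affine-linear-AC-liegr}, the explicit complement of $\im(\Psi)$, and the left-translation of the identity chart) are correct refinements of what the paper leaves implicit.
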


\begin{proof}
First of all we introduce some notations.
For $j=1,\ldots,n$ we denote $G_j:=\ACp {t_{j-1}}{t_j}G$, and for an open identity neighborhood
$U\subseteq G$ and a chart $\varphi\colon U\to V$ we write
$U_j:=\ACp {t_{j-1}}{t_j}U$, $V_j:=\ACp {t_{j-1}}{t_j}V$ and 
$\varphi_j\colon U_j\to V_j, \zeta\mapsto \varphi\circ\zeta$.

Clearly, the map $\Gamma_G$ is a group homomorphism and
	\begin{align*}
		\im(\Gamma_G)=\{(\eta_j)_{j=1,\ldots,n}\in\prod_{j=1}^n G_j :
		 \eta_{j-1}(t_j)=\eta_j(t_j) \mbox{ for all }j\in\{2,\ldots,n\}\}
	\end{align*}
is a subgroup of $\prod_{j=1}^n G_j$. Moreover, the function
	\begin{align*}
		\psi:=\prod_{j=1}^n \varphi_j \colon \prod_{j=1}^n U_j\to\prod_{j=1}^n V_j,\quad
		(\zeta_j,\ldots,\zeta_n)\mapsto (\varphi\circ\zeta_1,\ldots,\varphi\circ\zeta_n)
	\end{align*}
is a chart for $\prod_{j=1}^n G_j$ and 
$\psi(\im(\Gamma_G)\cap \prod_{j=1}^n U_j) = \im(\Gamma_E)\cap \prod_{j=1}^n V_j$,
where $\Gamma_E$ is the linear topological embedding with closed image from Lemma
\ref{lem:AC-vecsp-locality-axiom}. Therefore, $\im(\Gamma_G)$ is a Lie subgroup
modelled on the closed vector subspace $\im(\Gamma_E)$ of $\prod_{j=1}^n \ACp {t_{j-1}}{t_j}E$.

Finally, both compositions
	\begin{align*}
		\psi\circ\Gamma_G\circ\ACp ab\varphi^{-1}\colon \ACp abV\to\prod_{j=1}^n V_j,\quad
		\eta\mapsto \Gamma_E(\eta)
	\end{align*}
and
	\begin{align*}
		\ACp ab\varphi\circ \Gamma_G^{-1}\circ\left(\psi|_{\im(\Gamma_G)}\right)^{-1}\colon 
		\im(\Gamma_E)\cap \prod_{j=1}^n V_j&\to\ACp abV,\\
		\eta&\mapsto\Gamma_E^{-1}(\eta)
	\end{align*}
are smooth maps, thus we conclude that $\Gamma_G$ is a smooth diffeomorphism onto its image.
\end{proof}

See \cite[Lemma 5.10]{GlMR} for the next result.

\begin{proposition}\label{prop:double-push-forward-Lp-gr}
Let $G$ be a smooth Lie group, let $E$, $F$ be locally convex spaces and $f\colon G\times E\to F$
be a $C^{k+1}$-function and linear in the second argument. Then for $p\in[1,\infty]$ the function
	\begin{align}\label{eq:double-pushforward-Lp-gr-map}
		C([a,b],G)\times\Leb pabE\to\Leb pabF,\quad
		(\eta,[\gamma])\mapsto [f\circ(\eta,\gamma)]
	\end{align}
is $C^k$.
\end{proposition}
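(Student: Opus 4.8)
The plan is to localise on $G$ by charts and reduce everything to Proposition~\ref{prop:double-push-forward-Lp}, which is exactly this statement with the Lie group $G$ replaced by an open subset of a locally convex space. Write $\Theta\colon C([a,b],G)\times\Leb pabE\to\Leb pabF$ for the map $(\eta,[\gamma])\mapsto[f\circ(\eta,\gamma)]$ in question. I would not check well-definedness separately: it will fall out of the local factorisation below, since Proposition~\ref{prop:double-push-forward-Lp} already contains the corresponding well-definedness statement. Because being $C^k$ (for $k\in\N\cup\{0\}$) is a local property, it suffices to prove that $\Theta$ is $C^k$ on some open neighbourhood of an arbitrary point $(\bar\eta,[\bar\gamma])$ of the domain; the case $k=\infty$ then follows by running the argument for every finite $k$.

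So fix $(\bar\eta,[\bar\gamma])$. The image $\bar\eta([a,b])\subseteq G$ is compact, hence there are a partition $\abpart$ and charts $\varphi_j\colon U_j\to V_j$ of $G$ with $\bar\eta([t_{j-1},t_j])\subseteq U_j$ for $j=1,\dots,n$. Then $\mathcal{O}:=\{\eta\in C([a,b],G):\eta([t_{j-1},t_j])\subseteq U_j\text{ for all }j\}$ is an open neighbourhood of $\bar\eta$ (by compactness of the $[t_{j-1},t_j]$ and the topology of uniform convergence). Post-composing with the isomorphism of topological vector spaces $\Gamma_F\colon\Leb pabF\to\prod_{j=1}^n\Leb p{t_{j-1}}{t_j}F$ of Lemma~\ref{lem:Lp-locality-axiom}, it is enough to show that each component
\[
\mathcal{O}\times\Leb pabE\to\Leb p{t_{j-1}}{t_j}F,\qquad(\eta,[\gamma])\mapsto\bigl[f\circ(\eta,\gamma)|_{[t_{j-1},t_j]}\bigr]
\]
is $C^k$. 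For fixed $j$ put $g_j:=f\circ(\varphi_j^{-1}\times\id_E)\colon V_j\times E\to F$; this is $C^{k+1}$ and linear in the second argument, so by Proposition~\ref{prop:double-push-forward-Lp} the map $\Theta_{g_j}\colon C([t_{j-1},t_j],V_j)\times\Leb p{t_{j-1}}{t_j}E\to\Leb p{t_{j-1}}{t_j}F$, $(\zeta,[\delta])\mapsto[g_j\circ(\zeta,\delta)]$, is $C^k$. On $\mathcal{O}\times\Leb pabE$ we have $\varphi_j\circ\eta|_{[t_{j-1},t_j]}\in C([t_{j-1},t_j],V_j)$ and
\[
\bigl[f\circ(\eta,\gamma)|_{[t_{j-1},t_j]}\bigr]=\Theta_{g_j}\Bigl(\varphi_j\circ\eta|_{[t_{j-1},t_j]},\,[\gamma|_{[t_{j-1},t_j]}]\Bigr),
\]
so the $j$-th component is the composition of $\Theta_{g_j}$ with
\[
R_j\colon\mathcal{O}\times\Leb pabE\to C([t_{j-1},t_j],V_j)\times\Leb p{t_{j-1}}{t_j}E,\quad(\eta,[\gamma])\mapsto\bigl(\varphi_j\circ\eta|_{[t_{j-1},t_j]},\,[\gamma|_{[t_{j-1},t_j]}]\bigr).
\]

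It then remains to see that $R_j$ is smooth, whence the $j$-th component, being $\Theta_{g_j}\circ R_j$, is $C^k$, and the proof is complete. The second coordinate of $R_j$ is the restriction of the continuous linear coordinate projection $\pr_j\circ\Gamma_E$ from Lemma~\ref{lem:Lp-locality-axiom}, hence smooth. The first coordinate is the composition of the restriction map $\mathcal{O}\to C([t_{j-1},t_j],U_j)$, $\eta\mapsto\eta|_{[t_{j-1},t_j]}$, with the chart $C([t_{j-1},t_j],\varphi_j)$ of $C([t_{j-1},t_j],G)$; the restriction maps on spaces of continuous curves are smooth (in charts they are the continuous linear restriction maps $C([a,b],V)\to C([t_{j-1},t_j],V)$), exactly as for the $AC$-valued groups in Lemma~\ref{lem:AC-Lie-gr-locality-axiom}. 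I expect the only genuinely delicate point to be this handling of the domain $C([a,b],G)$ — verifying that $\mathcal{O}$ is open and that the restriction-and-chart map is smooth on it — which is precisely why the partition and the charts $\varphi_j$ are chosen adapted to $\bar\eta$, so that everything takes place inside chart neighbourhoods.
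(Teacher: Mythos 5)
Your overall strategy---localise along a partition adapted to $\bar\eta$, rewrite each piece in a chart of $G$, and feed the result to Proposition~\ref{prop:double-push-forward-Lp}---is the same as the paper's, and the algebraic factorisation of the $j$-th component as $\Theta_{g_j}\circ R_j$ is correct. The gap is exactly in the step you yourself flag as delicate: the smoothness of the first coordinate of $R_j$, i.e.\ of $\mathcal{O}\to C([t_{j-1},t_j],V_j)$, $\eta\mapsto\varphi_j\circ\eta|_{[t_{j-1},t_j]}$. You justify it by calling $C([t_{j-1},t_j],\varphi_j)$ ``the chart'' of $C([t_{j-1},t_j],G)$, but in this paper the only charts available on current groups $C(K,G)$ are the maps $C(K,\varphi)$ for charts $\varphi$ of $G$ around the identity, transported by left translations (cf.\ Lemma~\ref{lem:incl-AC-C-Lie-gr} and the construction scheme of Proposition~\ref{prop:abs-cont-Lie-gr}). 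Your $\varphi_j$ is an arbitrary chart whose domain merely contains $\bar\eta([t_{j-1},t_j])$, so to use $C([t_{j-1},t_j],\varphi_j)$ you would have to check its transition map against a translated identity chart; written out, that transition map is a superposition operator $\theta\mapsto\bigl(t\mapsto\varphi_j(\zeta_0(t)\,\varphi^{-1}(\theta(t)))\bigr)$ between open subsets of $C(K,\cdot)$-spaces, and its smoothness is an $\Omega$-lemma-type statement that is nowhere proved in the paper and is itself a close cousin of the proposition you are proving. As written, the argument therefore rests on an unproven, nontrivial ingredient rather than on the paper's toolkit.

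The paper's proof avoids this by a slightly different localisation: it fixes an identity chart $\varphi\colon U\to V$, an identity neighbourhood $W$ with $WW\subseteq U$, chooses the partition so fine that $\bar\eta(t_{j-1})^{-1}\bar\eta([t_{j-1},t_j])\subseteq W$, and works on the neighbourhood $Q=\{\zeta\in C([a,b],G):\bar\eta^{-1}\cdot\zeta\in C([a,b],W)\}$. In place of your $\varphi_j\circ\eta|_{[t_{j-1},t_j]}$ it uses $\varphi\circ\bigl(\bar\eta(t_{j-1})^{-1}\eta|_{[t_{j-1},t_j]}\bigr)$, absorbing the translation into the auxiliary function $g(x,y):=f(\bar\eta(t_{j-1})\varphi^{-1}(x),y)$, which is still $C^{k+1}$ and linear in $y$, so Proposition~\ref{prop:double-push-forward-Lp} applies as before. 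The payoff is that only three maps need to be smooth: restriction $C([a,b],G)\to C([t_{j-1},t_j],G)$ (a smooth homomorphism), left translation by the fixed element $\bar\eta(t_{j-1})^{-1}$ (smooth on any Lie group), and the identity chart $C([t_{j-1},t_j],\varphi)$---all of which are already available. No chart of $C(K,G)$ at a non-identity point is ever invoked. If you replace your $R_j$ by this translated version, your proof goes through.
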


\begin{proof}
The function is defined by Lemma \ref{lem:linear2-in-Lp}. We fix some $\bar{\eta}\in C([a,b],G)$
and some open identity neighborhood $U\subseteq G$. Then $U$ contains some open identity
neighborhood $W$ such that $WW\subseteq U$.  The function in (\ref{eq:double-pushforward-Lp-gr-map})
will be $C^k$ if the restriction
	\begin{align}
		Q\times\Leb pabE\to\Leb pabF,\quad
		(\eta,[\gamma])\mapsto [f\circ(\eta,\gamma)]
	\end{align}
is $C^k$, where $Q:=\{\zeta\in C([a,b],G) : \bar{\eta}^{-1}\cdot\zeta\in C([a,b],W) \}$ is an
open neighborhood of $\bar{\eta}$.

Consider a partition $\abpart$ such that
	\begin{align*}
		\bar{\eta}(t_{j-1})^{-1}\bar{\eta}([t_{j-1},t_j])\subseteq W.
	\end{align*}
From Lemma \ref{lem:Lp-locality-axiom} follows that the above function will be $C^k$
if 
	\begin{align*}
		Q\times\Leb pabE&\to\prod_{j=1}^n \Leb p{t_{j-1}}{t_j}F,\\
		(\eta,[\gamma])&\mapsto \left([f\circ(\eta,\gamma)|_{[t_{j-1},t_j]}]\right)_{j=\{1,\ldots,n\}}
	\end{align*}
is $C^k$, which will be the case if each component
	\begin{align}\label{eq:double-pushforward-Lp-gr-step1}
		Q\times\Leb pabE\to\Leb p{t_{j-1}}{t_j}F,\quad
		(\eta,[\gamma])\mapsto [f\circ(\eta,\gamma)|_{[t_{j-1},t_j]}]
	\end{align}
is $C^k$.

Now, by \ref{lem:Lp-locality-axiom}, the function
	\begin{align*}
		Q\times\Leb pabE&\to C([t_{j-1},t_j],G)\times\Leb p{t_{j-1}}{t_j}E,\\
		(\eta,[\gamma])&\mapsto (\bar{\eta}(t_{j-1})^{-1}\eta|_{[t_{j-1},t_j]},[\gamma|_{[t_{j-1},t_j]}])
	\end{align*}
is smooth; for $\eta\in Q$ and $t\in[t_{j-1},t_j]$ we have 
	\begin{align*}
		\bar{\eta}(t_{j-1})^{-1}\eta(t)
		=\bar{\eta}(t_{j-1})^{-1}\bar{\eta}(t)\bar{\eta}(t)^{-1}\eta(t)
		\in WW\subseteq U.
	\end{align*}
Thus
	\begin{align*}
		Q\times\Leb pabE&\to C([t_{j-1},t_j],V)\times\Leb p{t_{j-1}}{t_j}E,\\
		(\eta,[\gamma])&\mapsto (\varphi\circ\bar{\eta}(t_{j-1})^{-1}\eta|_{[t_{j-1},t_j]},[\gamma|_{[t_{j-1},t_j]}])
	\end{align*}
is smooth, where $\varphi\colon U\to V$ is a chart for $G$. We define
	\begin{align*}
		g\colon V\times E\to F,\quad (x,y)\mapsto f(\bar{\eta}(t_{j-1})\varphi^{-1}(x),y),
	\end{align*}
which is $C^{k+1}$ and linear in the second argument, and we use
Proposition \ref{prop:double-push-forward-Lp} to obtain a $C^k$-map
	\begin{align*}
		Q\times\Leb pabE&\to\Leb p{t_{j-1}}{t_j}F,\\
		(\eta,[\gamma])&\mapsto 
		[g\circ(\varphi\circ\bar{\eta}(t_{j-1})^{-1}\eta|_{[t_{j-1},t_j]},\gamma|_{[t_{j-1},t_j]})]
		= [f\circ (\eta,\gamma)|_{[t_{j-1},t_j]}],
	\end{align*}
which is exactly the required function from (\ref{eq:double-pushforward-Lp-gr-step1}).
\end{proof}

\section{Measurable regularity of Lie groups}

\subsection{Logarithmic derivatives of \emph{AC}-functions}

The following definition is taken from \cite[Definition 5.1]{GlMR}.

\begin{definition}\label{def:derivative-AC-mfd}
Let $M$ be a smooth manifold modelled on a sequentially complete locally convex space $E$, 
let $p\in[1,\infty]$.
Consider $\eta\in\ACp abM$, a partition $\abpart$ and charts $\varphi_j\colon U_j\to V_j$
for $M$ such that $\eta([t_{j-1},t_j])\subseteq U_j$ for all $j$ and
	\begin{align*}
		\eta_j:=\varphi_j\circ\eta|_{[t_{j-1},t_j]}\in\ACp {t_{j-1}}{t_j}E.
	\end{align*}
Denote $\eta_j^\prime:=[\gamma_j]\in\Leb p{t_{j-1}}{t_j}E$ and set
	\begin{align*}
		\gamma(t):=T\varphi_j^{-1}(\eta_j(t),\gamma_j(t))
	\end{align*}
for $t\in[t_{j-1},t_j[$, and
	\begin{align*}
		\gamma(b):=T\varphi_n^{-1}(\eta_n(b),\gamma_n(b)).
	\end{align*}
The constructed function $\gamma\colon[a,b]\to TM$ is measurable 
and we write $\dot\eta:=[\gamma]$.
\end{definition}

\begin{remark}\label{rem:derivative-AC-mfd-all-charts}
Note that the definition of $\dot\eta$ is independent of the choice of the partition $\abpart$
and the charts $\varphi_j$.
\end{remark}

Recall that on the tangent bundle $TG$ of a smooth Lie group $G$, there is a smooth
Lie group structure with multiplication $Tm_G\colon TG\times TG\to TG$ and inversion
$Tj_G\colon TG\to TG$. For $g$, $h\in G$, $v\in T_gG$ and $w\in T_hG$ write
	\begin{align*}
		v.h:= T_g\rho_h(v)\in T_{gh}G,\quad g.w:=T_h\lambda_g(w)\in T_{gh}G,
	\end{align*}
(where $\lambda_g\colon x\mapsto gx$, $\rho_h\colon x\mapsto xh$ are the left and the right
translations on $G$)
then
	\begin{align*}
		Tm_G(v,w)=g.w+v.h.
	\end{align*}

See \cite[Lemma 5.4]{GlMR} for the following computations.

\begin{lemma}\label{lem:AC-Lie-gr-derivative-rules}
Let $G$ be a Lie group modelled on a sequentially complete locally convex space, let $p\in[1,\infty]$.
For $\eta$, $\zeta\in\ACp abG$ with $\dot{\eta} = [\gamma]$, $\dot{\zeta} = [\xi]$
we have
	\begin{align}\label{eq:AC-Lie-gr-product-rule}
		(\eta\cdot\zeta)^{\cdot}= [t\mapsto \gamma(t).\zeta(t) + \eta(t).\xi(t)]
	\end{align}
and
	\begin{align}\label{eq:AC-Lie-gr-quotient-rule}
		(\eta^{-1})^{\cdot} = [t\mapsto -\eta(t)^{-1}.\gamma(t).\eta(t)^{-1}].
	\end{align}
Further, if $f\colon G\to H$ is a smooth function between Lie groups modelled on sequentially complete
locally convex spaces, then
	\begin{align}\label{eq:AC-Lie-gr-chain-rule}
		(f\circ\eta)^{\cdot} = [Tf\circ\gamma].
	\end{align}
\end{lemma}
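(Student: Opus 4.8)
The plan is to prove the three identities by reducing them, via charts, to the chain rule \eqref{eq:AC-vecsp-chain-rule} for $AC_{L^p}$-functions with values in a locally convex space, establishing \eqref{eq:AC-Lie-gr-chain-rule} first and then obtaining \eqref{eq:AC-Lie-gr-product-rule} and \eqref{eq:AC-Lie-gr-quotient-rule} as formal consequences of it applied to $m_G$ and $j_G$.

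\textbf{Step 1 (the chain rule \eqref{eq:AC-Lie-gr-chain-rule}).} Given a smooth $f\colon G\to H$ and $\eta\in\ACp abG$, we have $f\circ\eta\in\ACp abH$ by Lemma \ref{lem:C1-maps-act-on-AC-mfd}. To identify $\dot{(f\circ\eta)}$ via Definition \ref{def:derivative-AC-mfd}, I would fix a partition $\abpart$ and charts $\varphi_j\colon U_j\to V_j$ for $G$ with $\eta([t_{j-1},t_j])\subseteq U_j$, refining the partition if necessary so that there are also charts $\psi_j\colon P_j\to Q_j$ for $H$ with $f(\eta([t_{j-1},t_j]))\subseteq P_j$. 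Writing $\eta_j:=\varphi_j\circ\eta|_{[t_{j-1},t_j]}$ with $\eta_j'=[\gamma_j]$, the local representative of $f\circ\eta$ in the chart $\psi_j$ is $\psi_j\circ f\circ\eta|_{[t_{j-1},t_j]}=g_j\circ\eta_j$ with $g_j:=\psi_j\circ f\circ\varphi_j^{-1}$ smooth; by \eqref{eq:AC-vecsp-chain-rule} its derivative is $[t\mapsto dg_j(\eta_j(t),\gamma_j(t))]$. Plugging this into Definition \ref{def:derivative-AC-mfd} and using the functoriality identity $Tg_j=T\psi_j\circ Tf\circ T\varphi_j^{-1}$ together with $(\eta_j(t),\gamma_j(t))=T\varphi_j(\gamma(t))$, where $\dot\eta=[\gamma]$, the composition $T\psi_j^{-1}\circ Tg_j\circ T\varphi_j$ collapses to $Tf$, so that $\dot{(f\circ\eta)}=[Tf\circ\gamma]$. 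Independence of the choices of partition and charts is Remark \ref{rem:derivative-AC-mfd-all-charts}, and measurability of all $TM$-valued representatives is already part of Definition \ref{def:derivative-AC-mfd}.

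\textbf{Step 2 (the product rule \eqref{eq:AC-Lie-gr-product-rule}).} Since $m_G$ is smooth and $(\eta,\zeta)\in\ACp ab{G\times G}$ with $\dot{(\eta,\zeta)}=[(\gamma,\xi)]$ under the canonical identifications $T(G\times G)\cong TG\times TG$ and $\ACp ab{G\times G}\cong\ACp abG\times\ACp abG$, Step 1 applied to $f=m_G$ gives $(\eta\cdot\zeta)^{\cdot}=[t\mapsto Tm_G(\gamma(t),\xi(t))]$. The formula $Tm_G(v,w)=g.w+v.h$ recalled before the statement then yields $Tm_G(\gamma(t),\xi(t))=\eta(t).\xi(t)+\gamma(t).\zeta(t)$, which is \eqref{eq:AC-Lie-gr-product-rule}.

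\textbf{Step 3 (the inversion rule \eqref{eq:AC-Lie-gr-quotient-rule}).} Here $\eta^{-1}=j_G\circ\eta\in\ACp abG$; write $\dot{(\eta^{-1})}=[\tilde\xi]$. The constant curve $e_{\tilde G}\colon t\mapsto e_G$ has $\dot{(e_{\tilde G})}=[0]$, which is immediate from Definition \ref{def:derivative-AC-mfd} using any chart around $e_G$. Applying Step 2 to $\eta\cdot\eta^{-1}=e_{\tilde G}$ gives $\gamma(t).\eta(t)^{-1}+\eta(t).\tilde\xi(t)=0$ in $T_{e_G}G$ for almost every $t$; translating both sides from the left by $\eta(t)^{-1}$ (the inverse of left translation by $\eta(t)$) and using that left and right translations on $G$ commute, we get $\tilde\xi(t)=-\eta(t)^{-1}.\gamma(t).\eta(t)^{-1}$ for almost every $t$, which is \eqref{eq:AC-Lie-gr-quotient-rule}.

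\textbf{Main obstacle.} The only genuinely technical point is Step 1: passing to a common refinement of the partition so that a single pair of charts $(\varphi_j,\psi_j)$ covers $\eta([t_{j-1},t_j])$ and $f(\eta([t_{j-1},t_j]))$, and keeping the tangent-map bookkeeping straight while invoking \eqref{eq:AC-vecsp-chain-rule}, which holds only almost everywhere on each subinterval. Once Step 1 is in place, Steps 2 and 3 are purely formal.
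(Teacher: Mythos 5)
Your proof is correct, and Steps 1 and 2 follow the paper's own argument essentially verbatim: the chain rule \eqref{eq:AC-Lie-gr-chain-rule} is established in charts via \eqref{eq:AC-vecsp-chain-rule} together with the functoriality of tangent maps, and the product rule is then obtained by applying it to $m_G$ acting on $(\eta,\zeta)$. The only divergence is Step 3: the paper simply applies the chain rule once more, to $j_G$, and quotes the standard formula $Tj_G(v)=-g^{-1}.v.g^{-1}$ to read off \eqref{eq:AC-Lie-gr-quotient-rule}, whereas you derive the inversion rule from the product rule applied to $\eta\cdot\eta^{-1}=e_{\tilde{G}}$, using that the dot-derivative of a constant curve vanishes and that left and right translations commute. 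Both routes are valid; yours avoids invoking the explicit formula for $Tj_G$ at the cost of a short algebraic solve for $\tilde\xi$, while the paper's is a one-line application of the chain rule it has just proved.
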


\begin{proof}
We prove the last equation (\ref{eq:AC-Lie-gr-chain-rule}) first. 
Consider a partition $\abpart$ and charts $\varphi_j\colon U_j\to V_j$,
$\psi\colon P_j\to Q_j$ for $G$ and $H$, respectively, such that
	\begin{align*}
		&\varphi_j\circ\eta|_{[t_{j-1},t_j]}\in\ACp {t_{j-1}}{t_j}E,\\
		&\psi_j\circ f\circ\eta|_{[t_{j-1},t_j]}\in\ACp {t_{j-1}}{t_j}F,
	\end{align*}	 
where $E$ and $F$ are the model spaces of $G$ and $H$. Denote
	\begin{align*}
		&[\gamma_j] := (\varphi_j\circ\eta|_{[t_{j-1},t_j]})^\prime\in\Leb p{t_{j-1}}{t_j}E, \\
		&[\xi_j] := (\psi_j\circ f\circ\eta|_{[t_{j-1},t_j]})^\prime\in\Leb p{t_{j-1}}{t_j}F.
	\end{align*}
Then (using (\ref{eq:AC-vecsp-chain-rule})) we have
	\begin{align*}
		[\xi_j] = ((\psi_j\circ f \circ\varphi_j^{-1})\circ(\varphi_j\circ\eta|_{[t_{j-1},t_j]}))^\prime
		= [d(\psi_j\circ f \circ\varphi_j^{-1})(\varphi_j\circ\eta|_{[t_{j-1},t_j]},\gamma_j)].
	\end{align*}
Therefore, for $[\delta] := (f\circ\eta)^{\cdot}$ and $t\in[t_{j-1},t_j[$ we have
	\begin{align*}
		\delta(t) 
		&= T\psi_j^{-1} ((\psi_j\circ f\circ\eta)(t),
		d(\psi_j\circ f \circ\varphi_j^{-1})((\varphi_j\circ\eta)(t),\gamma_j(t)))\\
		& = T\psi_j^{-1}((\psi_j\circ f\circ\varphi_j^{-1}\circ\varphi_j\circ\eta)(t),
		d(\psi_j\circ f \circ\varphi_j^{-1})((\varphi_j\circ\eta)(t),\gamma_j(t)))\\
		& = (T\psi_j^{-1}\circ T(\psi_j\circ f\circ\varphi_j^{-1}))((\varphi_j\circ\eta)(t),\gamma_j(t))\\
		&=(Tf\circ T\varphi_j^{-1})((\varphi_j\circ\eta)(t),\gamma_j(t))\\
		&= (Tf\circ\gamma)(t),
	\end{align*}
and analogously for $t=b$.

Now,
	\begin{align*}
		(\eta\cdot\zeta)^{\cdot}
		=(m_G\circ (\eta,\zeta))^{\cdot}
		=[Tm_G\circ(\gamma,\xi)]
		=[t\mapsto \eta(t).\xi(t) + \gamma(t).\zeta(t)]
	\end{align*}
and
	\begin{align*}
		(\eta^{-1})^{\cdot}
		=(j_G\circ\eta)^{\cdot}
		=[Tj_G\circ\gamma]
		=[t\mapsto -\eta(t)^{-1}.\gamma(t).\eta(t)^{-1}].
	\end{align*}
\end{proof}

The logarithmic derivative of $\eta\in\ACp abG$ is defined as follows (see \cite[Definition 5.6]{GlMR}).

\begin{definition}\label{def:log-derivative}
For $\eta\in\ACp abG$ define the \emph{left logarithmic derivative of $\eta$} via
	\begin{align*}
		\delta(\eta):=[\omega_l\circ\gamma],
	\end{align*}
where $[\gamma]=\dot\eta$ and $\omega_l\colon TG\to\mathfrak{g}, v\mapsto \pi_{TG}(v)^{-1}. v$
with the bundle projection $\pi_{TG}\colon TG\to G$.
\end{definition}

As in \cite[Lemma 5.11]{GlMR}, we prove:

\begin{lemma}\label{lem:log-derivative-in-Lp}
Let $G$ be a Lie group modelled on a sequentially complete locally convex space $E$
and $p\in[1,\infty]$. If $\eta\in\ACp abG$,
then $\delta(\eta)\in\Leb pab{\mathfrak{g}}$.
\end{lemma}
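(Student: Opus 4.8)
The plan is to reduce the claim to the individual charts of $G$ and then invoke Lemma~\ref{lem:linear2-in-Lp}. Fix $\eta\in\ACp abG$. By Definition~\ref{def:derivative-AC-mfd}, $\dot\eta=[\gamma]$ is obtained from a partition $\abpart$ and charts $\varphi_j\colon U_j\to V_j$ for $G$ with $\eta([t_{j-1},t_j])\subseteq U_j$: writing $\eta_j:=\varphi_j\circ\eta|_{[t_{j-1},t_j]}\in\ACp{t_{j-1}}{t_j}{E}$ and choosing a representative $\gamma_j$ of $\eta_j'=[\gamma_j]\in\Leb p{t_{j-1}}{t_j}E$, one has $\gamma(t)=T\varphi_j^{-1}(\eta_j(t),\gamma_j(t))$ for $t\in[t_{j-1},t_j[$ (and the analogous value at $t=b$). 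The crucial preliminary point is that the map $\omega_l\colon TG\to\mathfrak{g}$ of Definition~\ref{def:log-derivative} is smooth --- it is the second component of the left trivialisation $TG\to G\times\mathfrak{g}$, $v\mapsto(\pi_{TG}(v),\omega_l(v))$, which is a diffeomorphism of smooth manifolds --- and that its restriction to each fibre $T_gG\to\mathfrak{g}$, $v\mapsto g^{-1}.v=T_g\lambda_{g^{-1}}(v)$, is linear.

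Given this, for each $j$ I would consider the map
\[
\kappa_j\colon V_j\times E\to\mathfrak{g},\qquad \kappa_j(x,y):=\omega_l\bigl(T\varphi_j^{-1}(x,y)\bigr).
\]
It is smooth, being the composition of the tangent map $T\varphi_j^{-1}$ (a smooth map $V_j\times E\to TG$) with the smooth map $\omega_l$; and for fixed $x$ it is linear in $y$, being the composition of the linear map $T_x\varphi_j^{-1}\colon E\to T_{\varphi_j^{-1}(x)}G$ with the linear restriction $\omega_l|_{T_{\varphi_j^{-1}(x)}G}$. Since $\pi_{TG}(\gamma(t))=\varphi_j^{-1}(\eta_j(t))=\eta(t)$ for $t\in[t_{j-1},t_j[$, a direct computation gives $\omega_l(\gamma(t))=\kappa_j(\eta_j(t),\gamma_j(t))$ there, i.e.\ $\omega_l\circ\gamma$ agrees on $[t_{j-1},t_j[$ with $\kappa_j\circ(\eta_j,\gamma_j)$. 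Now $\eta_j$ is a continuous curve with values in the open set $V_j$ (Definition~\ref{def:abs-cont-vector-space}) and $\gamma_j\in\LLeb p{t_{j-1}}{t_j}E$, so Lemma~\ref{lem:linear2-in-Lp} applies to the continuous map $\kappa_j$, linear in the second argument, and yields $\kappa_j\circ(\eta_j,\gamma_j)\in\LLeb p{t_{j-1}}{t_j}{\mathfrak{g}}$.

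It then remains to assemble the pieces. The function $\omega_l\circ\gamma\colon[a,b]\to\mathfrak{g}$ is measurable by Remark~\ref{rem:lusin-cont-comp}, being the composition of the measurable $\gamma$ with the continuous $\omega_l$. For every continuous seminorm $q$ on $\mathfrak{g}$ the previous step shows that $q\circ(\omega_l\circ\gamma)$ coincides a.e.\ on $[t_{j-1},t_j]$ with $q\circ\kappa_j\circ(\eta_j,\gamma_j)\in\mathcal{L}^p([t_{j-1},t_j])$, so that $\int_a^b q(\omega_l(\gamma(t)))^p\,dt=\sum_{j=1}^n\int_{t_{j-1}}^{t_j}q(\kappa_j(\eta_j(t),\gamma_j(t)))^p\,dt<\infty$ for $p<\infty$, and similarly $\esssup_{t\in[a,b]}q(\omega_l(\gamma(t)))<\infty$ for $p=\infty$; equivalently, one may glue the classes $[\kappa_j\circ(\eta_j,\gamma_j)]$ by Lemma~\ref{lem:Lp-locality-axiom}. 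Hence $\omega_l\circ\gamma\in\LLeb pab{\mathfrak{g}}$ and $\delta(\eta)=[\omega_l\circ\gamma]\in\Leb pab{\mathfrak{g}}$. The main obstacle is really the preparatory fact invoked above, namely that $\omega_l$ is smooth with linear fibrewise restrictions, so that each $\kappa_j$ is a well-defined smooth map that is linear in its second argument; everything after that is a routine application of the machinery already developed (in particular the independence of $\dot\eta$, and hence of $\delta(\eta)$, from the choices of partition, charts and representatives, noted in Remark~\ref{rem:derivative-AC-mfd-all-charts}).
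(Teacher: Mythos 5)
Your proposal is correct and follows essentially the same route as the paper: both reduce to the chart pieces from Definition~\ref{def:derivative-AC-mfd}, observe that $\omega_l\circ T\varphi_j^{-1}\colon V_j\times E\to\mathfrak{g}$ is continuous and linear in the second argument so that Lemma~\ref{lem:linear2-in-Lp} applies to $(\eta_j,\gamma_j)$, and then glue the pieces with Lemma~\ref{lem:Lp-locality-axiom}. Your write-up merely spells out in more detail why $\omega_l$ and hence $\kappa_j$ have the required properties, which the paper takes for granted.
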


\begin{proof}
By definition, there exists a partition $\abpart$ and there exist charts $\varphi_j\colon U_j\to V_j$
for $G$ such that $\eta([t_{j-1},t_j])\subseteq U_j$ and 
$\eta_j:=\varphi_j\circ\eta|_{[t_{j-1},t_j]}\in\ACp {t_{j-1}}{t_j}E$ for every $j\in\{1,\ldots,n\}$.
We denote $[\gamma_j]:=\eta_j^\prime$ and $[\gamma]:=\dot{\eta}$ and see that
	\begin{align*}
		\omega_l\circ\gamma|_{[t_{j-1},t_j]}
		= \omega_l\circ T\varphi_j^{-1}\circ(\eta_j,\gamma_j)\in\LLeb p{t_{j-1}}{t_j}{\mathfrak{g}}
	\end{align*}
by Lemma \ref{lem:linear2-in-Lp}, since $\omega_l\circ T\varphi_j^{-1}\colon V_j\times E\to\mathfrak{g}$
is continuous and linear in the second argument. From Lemma \ref{lem:Lp-locality-axiom}
follows $\delta(\eta) = [\omega_l\circ\gamma]\in\Leb pab{\mathfrak{g}}$.
\end{proof}

Some of the following properties can be found in \cite[Lemma 5.12]{GlMR}.

\begin{lemma}\label{lem:log-derivative-rules}
Let $\eta$, $\zeta\in\ACp abG$ and denote $\delta(\eta)=[\gamma]$, $\dot{\eta}=[\bar{\gamma}]$,
$\delta(\zeta)=[\xi]$, $\dot{\zeta}=[\bar{\xi}]$. Then the following holds.
\begin{itemize}
\item[(i)] We have
	\begin{align}\label{eq:log-derivative-product-rule}
		\delta(\eta\cdot\zeta) = [t\mapsto \zeta(t)^{-1}.\gamma(t).\zeta(t) + \xi(t)],
	\end{align}
and
	\begin{align}\label{eq:log-derivative-quotient-rule}
		\delta(\eta^{-1}) = [t\mapsto -\bar{\gamma}(t).\eta(t)^{-1}].
	\end{align}
\item[(ii)]
We have $\delta(\eta)=0$ if and only if $\eta$ is constant.
\item[(iii)]
We have $\delta(\eta)=\delta(\zeta)$ if and only if $\eta=g\zeta$ for some $g\in G$.
\item[(iv)]
If $f\colon G\to H$ is a Lie group homomorphism, then
	\begin{align}\label{eq:log-derivative-chain-rule}
		\delta(f\circ\eta) = [L(f)\circ\gamma].
	\end{align}
\item[(v)]
For $a\leq\alpha<\beta\leq b$ and $f\colon[c,d]\to[a,b], f(t):=\alpha + \frac{t-c}{d-c}(\beta-\alpha)$
we have
	\begin{align}\label{eq:log-derivative-affine-linear}
		\delta(\eta\circ f) = \frac{\beta-\alpha}{d-c}[\gamma\circ f].
	\end{align}
\end{itemize}
\end{lemma}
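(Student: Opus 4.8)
The plan is to reduce each item to the corresponding rule for $\dot{\eta}$ already established in Lemma \ref{lem:AC-Lie-gr-derivative-rules}, using the pointwise definition $\delta(\eta) = [\omega_l \circ \gamma]$ where $[\gamma] = \dot{\eta}$ and $\omega_l(v) = \pi_{TG}(v)^{-1}.v$. Throughout, the key observation is that $\omega_l\colon TG \to \mathfrak{g}$ is smooth and that $\delta(\eta)$ is an honest element of $\Leb pab{\mathfrak{g}}$ by Lemma \ref{lem:log-derivative-in-Lp}, so all identities only need to be checked $\lambda$-almost everywhere in $t$ on the representative curves.

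For (i), I would start from Equation (\ref{eq:AC-Lie-gr-product-rule}), which gives $(\eta\cdot\zeta)^{\cdot} = [t\mapsto \bar{\gamma}(t).\zeta(t) + \eta(t).\bar{\xi}(t)]$, and apply $\omega_l$ pointwise: using $\pi_{TG}(\bar{\gamma}(t).\zeta(t) + \eta(t).\bar{\xi}(t)) = \eta(t)\zeta(t)$ and the fact that $\omega_l$ is additive on each fibre and intertwines right translation appropriately, one computes $\omega_l(\bar{\gamma}(t).\zeta(t)) = \zeta(t)^{-1}.\omega_l(\bar{\gamma}(t)).\zeta(t) = \zeta(t)^{-1}.\gamma(t).\zeta(t)$ and $\omega_l(\eta(t).\bar{\xi}(t)) = \omega_l(\bar{\xi}(t)) = \xi(t)$, yielding (\ref{eq:log-derivative-product-rule}). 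The quotient rule (\ref{eq:log-derivative-quotient-rule}) follows the same way from (\ref{eq:AC-Lie-gr-quotient-rule}): $(\eta^{-1})^{\cdot} = [t\mapsto -\eta(t)^{-1}.\bar{\gamma}(t).\eta(t)^{-1}]$, whose base point is $\eta(t)^{-1}$, so applying $\omega_l$ gives $\eta(t).(-\eta(t)^{-1}.\bar{\gamma}(t).\eta(t)^{-1}) = -\bar{\gamma}(t).\eta(t)^{-1}$. For (iv), starting from (\ref{eq:AC-Lie-gr-chain-rule}), $(f\circ\eta)^{\cdot} = [Tf\circ\bar{\gamma}]$, and since $f$ is a homomorphism, $\omega_l^H(Tf(v)) = f(\pi(v))^{-1}.Tf(v) = Tf(\pi(v)^{-1}.v) = L(f)(\omega_l^G(v))$ using $f\circ\lambda_{\pi(v)^{-1}} = \lambda_{f(\pi(v))^{-1}}\circ f$; hence $\delta(f\circ\eta) = [L(f)\circ\gamma]$. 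For (v), I would invoke Lemma \ref{lem:affine-linear-AC-vecsp}/\ref{lem:affine-linear-AC-liegr}: reparametrising by the affine $f$ scales $\dot\eta$ by $\tfrac{\beta-\alpha}{d-c}$, i.e. $(\eta\circ f)^{\cdot} = \tfrac{\beta-\alpha}{d-c}[\bar\gamma\circ f]$, and since $\omega_l$ is fibrewise linear and commutes with the scalar, $\delta(\eta\circ f) = \tfrac{\beta-\alpha}{d-c}[\omega_l\circ(\bar\gamma\circ f)] = \tfrac{\beta-\alpha}{d-c}[\gamma\circ f]$.

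Item (ii) is where a little more care is needed. One direction is immediate: if $\eta$ is constant then in each chart $\eta_j$ is constant, so $\gamma_j = 0$ a.e. and $\dot\eta = 0$, hence $\delta(\eta) = 0$. Conversely, if $\delta(\eta) = 0$, then $[\omega_l\circ\gamma] = 0$, so $\omega_l(\gamma(t)) = 0$ a.e.; since $\omega_l$ restricted to each fibre $T_gG$ is a linear isomorphism onto $\mathfrak{g}$, this forces $\gamma(t) = 0_{\eta(t)} \in T_{\eta(t)}G$ a.e., i.e. $\dot\eta = 0$. Translating back through the charts, $\eta_j' = [\gamma_j] = 0$ in $\Leb p{t_{j-1}}{t_j}E$ for each $j$, so by the definition of $\ACp{t_{j-1}}{t_j}E$ (and Lemma \ref{lem:weak-integral-unique}), each $\eta_j$ is constant; continuity of $\eta$ and overlap of the intervals then gives that $\eta$ is globally constant. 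For (iii), apply (i): $\delta((g\zeta)) = \delta(\zeta)$ whenever $g\in G$ is constant (the term $\zeta^{-1}.\gamma.\zeta$ vanishes since the constant curve $t\mapsto g$ has $\dot{}=0$), giving one direction; conversely, if $\delta(\eta) = \delta(\zeta)$, use (\ref{eq:log-derivative-product-rule}) and (\ref{eq:log-derivative-quotient-rule}) to compute $\delta(\eta\cdot\zeta^{-1})$ and check it is $0$, whence $\eta\zeta^{-1}$ is constant $=: g$ by (ii), i.e. $\eta = g\zeta$. The main obstacle, such as it is, is bookkeeping the fibrewise identities for $\omega_l$ under left/right translations correctly — in particular verifying $\omega_l(v.h) = h^{-1}.\omega_l(v).h$ and $\omega_l(g.w) = \omega_l(w)$ from the defining formula $\omega_l(v) = \pi_{TG}(v)^{-1}.v$ together with the chain rule $T(\lambda_a\circ\rho_b) = T\lambda_a\circ T\rho_b$ — after which every item is a one-line pointwise substitution into the already-proven rules for $\dot\eta$.
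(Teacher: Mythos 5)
Your proposal is correct and follows essentially the same route as the paper: each item is obtained by applying $\omega_l$ pointwise to the corresponding rule for $\dot{\eta}$ from Lemma \ref{lem:AC-Lie-gr-derivative-rules}, (ii) is reduced through charts to $[\gamma_j]=0$ and Lemma \ref{lem:weak-integral-unique}, and (iii) follows from (i) and (ii) via $\delta(\eta\cdot\zeta^{-1})=0$. The fibrewise identities $\omega_l(v.h)=h^{-1}.\omega_l(v).h$ and $\omega_l(g.w)=\omega_l(w)$ that you flag as the main bookkeeping step are exactly the computations the paper carries out.
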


\begin{proof}
(i)
Using Equations (\ref{eq:AC-Lie-gr-product-rule}) and (\ref{eq:AC-Lie-gr-quotient-rule}), we get
	\begin{align*}
		\delta(\eta\cdot\zeta)
		&= [t\mapsto (\eta(t)\zeta(t))^{-1}.(\bar{\gamma}(t).\zeta(t) + \eta(t).\bar{\xi}(t))]\\
		&= [t\mapsto (\zeta(t)^{-1}\eta(t)^{-1}).\bar{\gamma}(t).\zeta(t) + (\zeta(t)^{-1}\eta(t)^{-1}).\eta(t).\bar{\xi}(t)]\\
		&= [t\mapsto \zeta(t)^{-1}.\gamma(t).\zeta(t) + \xi(t)],
	\end{align*}
and
	\begin{align*}
		\delta(\eta^{-1})
		= [t\mapsto \eta(t).(-\eta(t)^{-1}.\bar{\gamma}(t).\eta(t)^{-1})]
		= [t\mapsto -\bar{\gamma}(t).\eta(t)^{-1}].
	\end{align*}

(ii)
Now, we assume that $\delta(\eta)=0$, that is, 
$[t\mapsto \eta(t)^{-1}.\bar{\gamma}(t)] = 0\in\Leb pab{\mathfrak{g}}$. In other words, 
$\eta(t)^{-1}.\bar{\gamma}(t)=0\in\mathfrak{g}$ for a.e. $t\in[a,b]$. 
Let $\abpart$, charts $\varphi_j$ and $[\gamma_j]$ be as in Definition \ref{def:abs-cont-mfd}.
Then for $\bar{\gamma}(t)\in T_{\eta(t)}G$ we have $d\varphi_j(\bar{\gamma}(t)) = 0\in E$
for a.e. $t\in[t_{j-1},t_j]$. On the other hand, we have $d\varphi_j(\bar{\gamma}(t)) = \gamma_j(t)$
for a.e. $t\in[t_{j-1},t_j]$, thus $[\gamma_j]=0 \in\Leb p{t_{j-1}}{t_j}E$. That means, that 
$\varphi\circ\eta|_{[t_{j-1},t_j]}$ is constant, whence $\eta|_{[t_{j-1},t_j]}$ is constant,
whence $\eta$ is constant, being continuous.

Conversely, assume $\eta(t)=g\in G$ for all $t\in[a,b]$. Then for some chart $\varphi$ around $g$
we have
	\begin{align*}
		\varphi(g) = \varphi(\eta(t)) = \varphi(g) + \int_a^t \gamma_g(s) \,ds
	\end{align*}
for every $t\in[a,b]$, thus $\gamma_g(s)=0$ for a.e. $s\in[a,b]$, by Lemma \ref{lem:weak-integral-unique},
in other words, $(\varphi\circ\eta)^\prime = 0\in\Leb pabE$. 
Therefore,
	\begin{align*}
		\bar{\gamma}(t) = T\varphi^{-1}(\varphi(\eta(t)), 0) = T\varphi^{-1}(\varphi(g),0)
	\end{align*}
a.e.,
whence
	\begin{align*}
		\delta(\eta) = [t\mapsto \eta(t)^{-1}.T\varphi^{-1}(\varphi(g),0)] = [t\mapsto g^{-1}.T\varphi^{-1}(\varphi(g),0)]
		=0 \in\Leb pab{\mathfrak{g}}.
	\end{align*}

(iii)
Now, assume $[\gamma] = \delta(\eta) = \delta(\zeta) = [\xi]$, then (using Equations (\ref{eq:log-derivative-product-rule})
and (\ref{eq:log-derivative-quotient-rule}))
	\begin{align*}
		\delta(\eta\cdot\zeta^{-1})
		&= [t\mapsto \zeta(t).\gamma(t).\zeta(t)^{-1} - \bar{\xi}(t).\zeta(t)^{-1}]\\
		&= [t\mapsto \zeta(t).\xi(t).\zeta(t)^{-1} - \bar{\xi}(t).\zeta(t)^{-1}]\\
		&= [t\mapsto \bar{\xi}(t).\zeta(t)^{-1} - \bar{\xi}(t).\zeta(t)^{-1}]
		=0 \in\Leb pab{\mathfrak{g}}.
	\end{align*}
Then, by the above, the curve $\eta\cdot\zeta^{-1}$ is constant, say $\eta\cdot\zeta^{-1} = g\in G$, thus $\eta = g\zeta$.

Conversely, assume $\eta = g\zeta$. We define $\eta_g\colon [a,b]\to G, t\mapsto g$ in $\ACp abG$,
then  $[\gamma_g]=\delta(\eta_g)=0\in\Leb pab{\mathfrak{g}}$ (by the above), whence
	\begin{align*}
		\delta(\eta) = \delta(\eta_g\cdot\zeta)
		= [t\mapsto \xi(t)] = \delta(\zeta),
	\end{align*}
using Equation (\ref{eq:log-derivative-product-rule}).

(iv)
By (\ref{eq:AC-Lie-gr-chain-rule}), we have
	\begin{align*}
		\delta(f\circ\eta)
		&= [t\mapsto f(\eta(t))^{-1}.Tf(\bar{\gamma}(t))]\\
		&= [t\mapsto (T\lambda_{f(\eta(t))^{-1}}^H\circ Tf)(\bar{\gamma}(t))]\\
		&= [t\mapsto T(\lambda_{f(\eta(t))^{-1}}^H\circ f)(\bar{\gamma}(t))]\\
		&= [t\mapsto T(f\circ\lambda_{\eta(t)^{-1}}^G)(\bar{\gamma}(t))]\\
		&= [t\mapsto Tf(\eta(t)^{-1}.\bar{\gamma}(t))]\\
		&= [t\mapsto L(f)(\gamma(t))]
		= [L(f)\circ\gamma].
	\end{align*}
Note that we used $\lambda_{f(\eta(t))^{-1}}^H\circ f = f\circ\lambda_{\eta(t)^{-1}}^G$ as $f$ is a group homomorphism.

(v) 
Consider a partition $c=t_0<t_1<\ldots<t_n=d$ and charts $\varphi\colon U_j\to V_j$
with $\eta(f([t_{j-1},t_j]))\subseteq U_j$.
Write
	\begin{align*}
		f_j:= f|_{[t_{j-1},t_j]},\quad \eta_j := \eta|_{[f(t_{j-1}),f(t_j)]}.
	\end{align*}
Identifying equivalence classes with functions, we obtain
	\begin{align*}
		\delta(\eta\circ f)|_{[t_{j-1},t_j]}
		&=\omega_l \circ T\varphi_j^{-1}\circ
		(\varphi_j\circ\eta\circ f_j, (\varphi_j\circ\eta\circ f_j)^\prime)\\
		&=\omega_l \circ T\varphi_j^{-1}\circ
		(\varphi_j\circ\eta\circ f_j, 
		\frac{\beta-\alpha}{d-c}(\varphi_j\circ\eta_j)^\prime\circ f_j)\\
		&=\omega_l \circ T\varphi_j^{-1}\circ
		(\varphi\circ\eta_j,\frac{\beta-\alpha}{d-c}(\varphi_j\circ\eta_j)^\prime)\circ f_j\\
		&=\frac{\beta-\alpha}{d-c}\left(\omega_l \circ T\varphi_j^{-1}\circ
		(\varphi\circ\eta_j,(\varphi_j\circ\eta_j)^\prime)\circ f_j\right)\\
		&=\frac{\beta-\alpha}{d-c}(\delta(\eta)\circ f|_{[t_{j-1},t_j]}),
	\end{align*}
using the formula in Lemma \ref{lem:affine-linear-AC-vecsp} and the linearity of
$\omega_l\circ T\varphi_j^{-1}$ in its second argument.
\end{proof}

\subsection{$\mathbf{L^p}$-regularity}

The $L^p$-semiregularity and $L^p$-regularity of Lie groups modelled on sequentially
complete locally convex spaces is defined as in \cite[Definition 5.16]{GlMR}.

\begin{definition}\label{def:Lp-regular-Lie-gr}
Let $G$ be a  smooth Lie group modelled on a sequentially complete locally convex space. 
For $p\in[1,\infty]$, the Lie group $G$ is called
\emph{$L^p$-semiregular} if for every $\gamma\in\Leb p01{\mathfrak{g}}$ the initial value
problem
	\begin{align}\label{eq:initial-value-problem}
		\delta(\eta)=\gamma,\quad \eta(0)=e
	\end{align}
has a solution $\eta_\gamma\in\ACp abG$ (which is unique, by Lemma \ref{lem:log-derivative-rules}).

An $L^p$-semiregular Lie group $G$ is called \emph{$L^p$-regular} if the obtained function
	\begin{align}
		\Evol\colon\Leb p01{\mathfrak{g}} \to \ACp abG,\quad
		\gamma\mapsto\eta_\gamma
	\end{align}
is smooth.
\end{definition}

\begin{remark}\label{rem:regular-evol-smooth}
As in \cite[Remark 5.18]{GlMR}, we note that
if a Lie group $G$ is $L^p$-regular, then the function
	\begin{align*}
		\evol\colon \Leb p01{\mathfrak{g}}\to G,\quad \gamma\mapsto \Evol(\gamma)(1)
	\end{align*}
is smooth, since so is 
the evaluation map $\ev_1\colon \ACp 01G\to G, \eta\mapsto \eta(1)$
(see Lemma \ref{lem:eval-smooth-AC-Liegr}).
\end{remark}

We prove the result from \cite[Proposition 5.20]{GlMR}.

\begin{proposition}\label{prop:Evol-smoothness}
Let $G$ be an $L^p$-semiregular Lie group. Then the function 
$\Evol$ is smooth if and only if $\Evol$
is smooth as a function to $C([0,1],G)$.
\end{proposition}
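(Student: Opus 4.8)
The plan is to prove both implications. One direction is immediate: the inclusion map $\incl\colon \ACp 01G\to C([0,1],G)$ is a smooth homomorphism by Lemma \ref{lem:incl-AC-C-Lie-gr}, so if $\Evol\colon\Leb p01{\mathfrak{g}}\to\ACp 01G$ is smooth, then $\incl\circ\Evol$ is smooth as a composition of smooth maps, which is exactly the statement that $\Evol$ is smooth as a function to $C([0,1],G)$. The substantive direction is the converse: assuming $\Evol\colon\Leb p01{\mathfrak{g}}\to C([0,1],G)$ is smooth, we must upgrade this to smoothness into $\ACp 01G$.

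For the converse, I would work locally near an arbitrary fixed $\gamma_0\in\Leb p01{\mathfrak{g}}$ and use the defining isomorphism $\Phi\colon\ACp 01G\to\ldots$ (really the chart description from Proposition \ref{prop:abs-cont-Lie-gr}) to reduce smoothness of $\Evol$ into $\ACp 01G$ to smoothness of a composite with charts $\ACp 01\varphi$. Concretely, pick the partition $0=t_0<t_1<\cdots<t_n=1$ associated to the continuous curve $\eta_{\gamma_0}=\Evol(\gamma_0)$ and charts $\varphi_j\colon U_j\to V_j$ with $\eta_{\gamma_0}([t_{j-1},t_j])\subseteq U_j$. By continuity of $\Evol$ into $C([0,1],G)$, for $\gamma$ near $\gamma_0$ we still have $\Evol(\gamma)([t_{j-1},t_j])\subseteq U_j$, so via Lemma \ref{lem:AC-Lie-gr-locality-axiom} (the locality embedding) it suffices to show that for each $j$ the map $\gamma\mapsto \varphi_j\circ\Evol(\gamma)|_{[t_{j-1},t_j]}\in\ACp {t_{j-1}}{t_j}E$ is smooth, and by Definition \ref{def:abs-cont-vector-space} this means showing that both components $\gamma\mapsto\varphi_j(\Evol(\gamma)(t_{j-1}))$ and $\gamma\mapsto(\varphi_j\circ\Evol(\gamma)|_{[t_{j-1},t_j]})'\in\Leb p{t_{j-1}}{t_j}E$ are smooth. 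The first is the composition of the (assumed smooth) evaluation-at-$t_{j-1}$ of $\Evol$ into $C([0,1],G)$ with $\varphi_j$, hence smooth.

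The heart of the matter is the derivative component, and the idea is that the derivative of $\varphi_j\circ\Evol(\gamma)$ is recoverable algebraically from $\gamma$ itself. Indeed, writing $\eta=\Evol(\gamma)$ we have $\delta(\eta)=\gamma$ by construction, and unwinding the definition of $\delta$ through the chart $\varphi_j$ (as in the proof of Lemma \ref{lem:log-derivative-in-Lp}) gives, on $[t_{j-1},t_j]$,
\begin{align*}
	\gamma(t) = (\omega_l\circ T\varphi_j^{-1})\bigl(\varphi_j(\eta(t)),\ (\varphi_j\circ\eta)'(t)\bigr),
\end{align*}
where $\omega_l\circ T\varphi_j^{-1}\colon V_j\times E\to\mathfrak{g}$ is smooth and linear in the second argument; since it is a linear isomorphism in the second slot, we can invert to express $(\varphi_j\circ\eta)'(t) = \Theta_j\bigl(\varphi_j(\eta(t)),\gamma(t)\bigr)$ for a smooth map $\Theta_j\colon V_j\times\mathfrak{g}\to E$ linear in the second argument. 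Thus
\begin{align*}
	\gamma\mapsto \bigl(\varphi_j\circ\Evol(\gamma)|_{[t_{j-1},t_j]}\bigr)' = \bigl[\Theta_j\circ\bigl(\varphi_j\circ\Evol(\gamma)|_{[t_{j-1},t_j]},\ \gamma|_{[t_{j-1},t_j]}\bigr)\bigr]
\end{align*}
is a composition of: the smooth map $\gamma\mapsto\varphi_j\circ\Evol(\gamma)|_{[t_{j-1},t_j]}\in C([t_{j-1},t_j],V_j)$ (from the hypothesis plus Lemma \ref{lem:affine-linear-Lp}-type restriction, which is smooth on $C$-spaces), the continuous linear restriction $\gamma\mapsto\gamma|_{[t_{j-1},t_j]}$ on $\Leb p{}{} {\mathfrak{g}}$ from Lemma \ref{lem:Lp-locality-axiom}, and the pushforward map $\Theta_f\colon C([t_{j-1},t_j],V_j)\times\Leb p{}{}{\mathfrak{g}}\to\Leb p{}{}E$ which is smooth by Proposition \ref{prop:double-push-forward-Lp}. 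Hence this component is smooth, completing the argument.

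The main obstacle I anticipate is bookkeeping rather than conceptual: making the local reduction rigorous requires care that the chosen neighbourhood of $\gamma_0$ is simultaneously small enough for all finitely many charts $\varphi_j$ (continuity of $\Evol$ into $C([0,1],G)$ handles this since $[0,1]$ is compact and $\Evol(\gamma_0)([t_{j-1},t_j])$ lies in the open set $U_j$), and that the "inverse in the second slot" map $\Theta_j$ is genuinely smooth — this follows because $T\varphi_j$ is a diffeomorphism and $\omega_l$ is smooth, so $\omega_l\circ T\varphi_j^{-1}$ restricted to each fibre $\{x\}\times E$ is a toplinear isomorphism depending smoothly on $x$, and inverting a smoothly-varying family of continuous linear isomorphisms is smooth. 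One should also note that $\Evol$ is a homomorphism / the local picture at general $\gamma_0$ reduces to the picture at $0$ by translating, but the direct local argument above avoids even needing that.
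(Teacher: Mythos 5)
Your proof is correct in outline and follows essentially the same route as the paper: the easy direction via Lemma \ref{lem:incl-AC-C-Lie-gr}, and for the converse a localization along a partition via the locality embedding of Lemma \ref{lem:AC-Lie-gr-locality-axiom}, a split into the initial value (handled by the assumed smoothness into $C([0,1],G)$) and the derivative component, and the key identity expressing $(\varphi\circ\eta_\gamma)'$ as $[\Theta\circ(\mbox{curve},\gamma)]$ with $\Theta$ smooth and linear in the second slot, to which the double-pushforward result applies. Your $\Theta_j$ is exactly the paper's $d\varphi\circ\sigma$ read through the chart, namely $\Theta_j(x,v)=d\varphi_j(\varphi_j^{-1}(x).v)$; note that you should not appeal to any general principle that inverting a smoothly varying family of toplinear isomorphisms is smooth (this is problematic in the locally convex setting), but the explicit formula makes $\Theta_j$ manifestly smooth, so this is harmless.

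The one genuine gap is the reduction ``it suffices to show that $\gamma\mapsto\varphi_j\circ\Evol(\gamma)|_{[t_{j-1},t_j]}$ is smooth into $\ACp{t_{j-1}}{t_j}E$.'' For this you need $\ACp{t_{j-1}}{t_j}{\varphi_j}\colon\ACp{t_{j-1}}{t_j}{U_j}\to\ACp{t_{j-1}}{t_j}{V_j}$ to be a diffeomorphism onto an open subset of the manifold $\ACp{t_{j-1}}{t_j}G$, but Proposition \ref{prop:abs-cont-Lie-gr} only provides this for charts $\varphi\colon U\to V$ with $U$ a symmetric identity neighbourhood, whereas your $U_j$ are arbitrary chart domains along the curve. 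This is precisely why the paper first left-translates by the constant $\eta_{\bar{\gamma}}(t_j)^{-1}$ (legitimate, since left translations on the Lie group $\ACp{t_{j-1}}{t_j}G$ are smooth diffeomorphisms) so that the translated curves all land in $C([t_{j-1},t_j],U)$ for a single identity chart, where Proposition \ref{prop:abs-cont-Lie-gr} applies verbatim; it then uses the invariance $\delta(g\zeta)=\delta(\zeta)$ from Lemma \ref{lem:log-derivative-rules}(iii) to see that the derivative component is still computed from $\gamma$ itself, and concludes with Proposition \ref{prop:double-push-forward-Lp-gr}. Your argument becomes fully rigorous once you insert this translation step (or separately prove that $\ACp ab{\varphi}$ is a chart for arbitrary charts $\varphi$ of $G$); as written, the step is true but not covered by the results you cite.
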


\begin{proof}
First assume that $\Evol\colon\Leb p01{\mathfrak{g}} \to \ACp 01G$ is smooth.
As the inclusion map $\incl\colon \ACp 01G\to C([0,1],G)$ is smooth (see Lemma \ref{lem:incl-AC-C-Lie-gr}),
the composition $\incl\circ\Evol\colon \Leb p01{\mathfrak{g}}\to C([0,1],G)$ is smooth.

Conversely, assume that $\Evol\colon \Leb p01{\mathfrak{g}}\to C([0,1],G)$ is smooth; for
some fixed $\bar{\gamma}\in\Leb p01{\mathfrak{g}}$ we are going to find some open neighborhood $P$
of $\bar{\gamma}$ such that $\Evol|_P\colon P\to\ACp 01G$ is smooth.

To this end, let $U\subseteq G$ be an open identity neighborhood and $\varphi\colon U\to V$
be a chart. Then $U$ contains some
open identity neighborhood $W$ such that $WW\subseteq U$. For 
$\eta_{\bar{\gamma}}:=\Evol(\bar{\gamma})$, the subset
	\begin{align*}
		Q := \{\zeta\in C([0,1],G) : \eta_{\bar{\gamma}}^{-1}\cdot\zeta\in C([0,1],W) \} 
	\end{align*}
is an open neighborhood of $\eta_{\bar{\gamma}}$. Set
	\begin{align*}
		P := \Evol^{-1}(Q).
	\end{align*}
Now, we want to show that the function
	\begin{align}\label{eq:Evol-step1}
		P\to\ACp 01G, \quad \gamma\mapsto \eta_\gamma:=\Evol(\gamma)
	\end{align}
is smooth.

As $\eta_{\bar{\gamma}}$ is continuous, there exists a partition $0=t_0<t_1<\cdots<t_n=1$ 
such that 
$\eta_{\bar{\gamma}}(t_j)^{-1}\eta_{\bar{\gamma}}([t_{j-1},t_j])\subseteq W$ for each $j\in\{1,\ldots,n\}$.
Using the function $\Gamma_G$ from Lemma \ref{lem:AC-Lie-gr-locality-axiom}, the
map in (\ref{eq:Evol-step1}) will be smooth if
	\begin{align}\label{eq:Evol-step2}
		P\mapsto \prod_{j=1}^n \ACp {t_{j-1}}{t_j}G,\quad 
		\gamma\mapsto \left(\eta_{\gamma}|_{[t_{j-1},t_j]}\right)_{j=1,\ldots,n}
	\end{align}
is smooth, which will be the case if each of the components
	\begin{align}\label{eq:Evol-step3}
		P\mapsto \ACp {t_{j-1}}{t_j}G,\quad
		\gamma\mapsto \eta_{\gamma}|_{[t_{j-1},t_j]}
	\end{align}
is smooth. As left translations on the Lie group $\ACp {t_{j-1}}{t_j}G$ are smooth
diffeomorphisms, the function in (\ref{eq:Evol-step3}) will be smooth if
	\begin{align}\label{eq:Evol-step4}
		P\mapsto \ACp {t_{j-1}}{t_j}G,\quad
		\gamma\mapsto \eta_{\bar{\gamma}}(t_j)^{-1}\eta_{\gamma}|_{[t_{j-1},t_j]}
	\end{align}
is a smooth map.

Now, for every $t\in[t_{j-1},t_j]$ we have
	\begin{align*}
		\eta_{\bar{\gamma}}(t_j)^{-1}\eta_{\gamma}(t) = 
		\eta_{\bar{\gamma}}(t_j)^{-1}\eta_{\bar{\gamma}}(t)\eta_{\bar{\gamma}}(t)^{-1}\eta_{\gamma}(t)
		\in WW\subseteq U,
	\end{align*}
in other words, $\eta_{\bar{\gamma}}(t_j)^{-1}\eta_{\gamma}|_{[t_{j-1},t_j]}\in\ACp {t_{j-1}}{t_j}U$.
Thus the smoothness of (\ref{eq:Evol-step4}) will follow from the smoothness of
	\begin{align}\label{eq:Evol-step5}
		P\to \ACp {t_{j-1}}{t_j}E,\quad
		\gamma\mapsto \varphi\circ\eta_{\bar{\gamma}}(t_j)^{-1}\eta_{\gamma}|_{[t_{j-1},t_j]}.
	\end{align}
Using the definition of the
topology on $\ACp {t_{j-1}}{t_j}E$ (see Definition \ref{def:abs-cont-vector-space}), we will show that
	\begin{align*}
		P\to E\times \Leb p{t_{j-1}}{t_j}E,\quad
		\gamma\mapsto (\varphi(\eta_{\bar{\gamma}}(t_j)^{-1}\eta_{\gamma}(t_{j-1})),
		(\varphi\circ\eta_{\bar{\gamma}}(t_j)^{-1}\eta_{\gamma}|_{[t_{j-1},t_j]})^\prime)
	\end{align*}	
is smooth.

Using the assumed smoothness of $P\to C([0,1],G), \gamma\mapsto\eta_{\gamma}$, we
see that the
first component of the above function is smooth.
Therefore, it remains to show that 
	\begin{align}\label{eq:Evol-step7}
		P\to\Leb p{t_{j-1}}{t_j}E,\quad
		\gamma\mapsto (\varphi\circ\eta_{\bar{\gamma}}(t_j)^{-1}\eta_{\gamma}|_{[t_{j-1},t_j]})^\prime
	\end{align}
is smooth.

Identifying equivalence classes with functions, we have
	\begin{align*}
		(\varphi\circ\eta_{\bar{\gamma}}(t_j)^{-1}\eta_{\gamma}|_{[t_{j-1},t_j]})^\prime
		= d\varphi\circ (\eta_{\bar{\gamma}}(t_j)^{-1}\eta_{\gamma}|_{[t_{j-1},t_j]})^{\cdot}.
	\end{align*}
Consider the smooth function
	\begin{align*}
		\sigma\colon G\times\mathfrak{g}\to TG,\quad (g,v)\mapsto g.v.
	\end{align*}
We have 
	\begin{align*}
		d\varphi\circ (\eta_{\bar{\gamma}}(t_j)^{-1}&\eta_{\gamma}|_{[t_{j-1},t_j]})^{\cdot}\\
		&= d\varphi\circ\sigma\circ
		(\eta_{\bar{\gamma}}(t_j)^{-1}\eta_{\gamma}|_{[t_{j-1},t_j]},
		\delta(\eta_{\bar{\gamma}}(t_j)^{-1}\eta_{\gamma}|_{[t_{j-1},t_j]}))\\
		&=d\varphi\circ\sigma\circ
		(\eta_{\bar{\gamma}}(t_j)^{-1}\eta_{\gamma}|_{[t_{j-1},t_j]},
		\delta(\eta_{\gamma}|_{[t_{j-1},t_j]}))\\
		&=d\varphi\circ\sigma\circ
		(\eta_{\bar{\gamma}}(t_j)^{-1}\eta_{\gamma}|_{[t_{j-1},t_j]},\gamma|_{[t_{j-1},t_j]}),
	\end{align*}
using $(iii)$ from Lemma \ref{lem:log-derivative-rules}. Hence the map in (\ref{eq:Evol-step7})
will be smooth if
	\begin{align}
		P\to\Leb p{t_{j-1}}{t_j}E,\quad
		\gamma\mapsto d\varphi\circ\sigma\circ
		(\eta_{\bar{\gamma}}(t_j)^{-1}\eta_{\gamma}|_{[t_{j-1},t_j]},\gamma|_{[t_{j-1},t_j]})
	\end{align}
is smooth. But this is true, the function being a composition of the smooth functions
	\begin{align*}
		P\to C([t_{j-1},t_j],U)\times \Leb p{t_{j-1}}{t_j}E,\quad 
		\gamma\mapsto (\eta_{\bar{\gamma}}(t_j)^{-1}\eta_{\gamma}|_{[t_{j-1},t_j]},\gamma|_{[t_{j-1},t_j]})
	\end{align*}
and
	\begin{align*}
		C([t_{j-1},t_j],U)\times\Leb p{t_{j-1}}{t_j}{\mathfrak{g}}\to\Leb p{t_{j-1}}{t_j}{\mathfrak{g}}&,\quad
		(\eta,\gamma)\mapsto d\varphi\circ\sigma\circ(\eta,\gamma),
	\end{align*}
(the smoothness of the last function holds by Proposition \ref{prop:double-push-forward-Lp-gr}, as
the composition 
$d\varphi\circ\sigma\colon G\times\mathfrak{g}\to E$ is linear in the second argument).
\end{proof}

Analogously to \cite[Corollary 5.21]{GlMR} we get the following:

\begin{corollary}\label{cor:Lie-gr-Lp-then-Lq-regular}
Let $G$ be a Lie group and $p,q\in[1,\infty]$ with $q\geq p$. If
$G$ is $L^p$-regular, then $G$ is $L^q$-regular. Furthermore, in this case $G$ is $C^0$-regular.
\end{corollary}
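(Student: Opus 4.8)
The plan is to leverage the continuous inclusions of Lebesgue spaces from Remark~\ref{rem:Lp-inclusions} together with Proposition~\ref{prop:Evol-smoothness}, much as in \cite[Corollary~5.21]{GlMR}. Since $q\geq p$, the inclusion $\iota\colon\Leb q01{\mathfrak{g}}\to\Leb p01{\mathfrak{g}}$ is continuous and linear, hence smooth; likewise $C([0,1],\mathfrak{g})\subseteq\Leb\infty01{\mathfrak{g}}\subseteq\Leb p01{\mathfrak{g}}$ with smooth inclusions.

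\emph{Step 1 ($L^q$-semiregularity).} Fix $\gamma\in\Leb q01{\mathfrak{g}}\subseteq\Leb p01{\mathfrak{g}}$ and let $\eta:=\eta_\gamma\in\ACp 01G$ be the solution (unique by Lemma~\ref{lem:log-derivative-rules}) of $\delta(\eta)=\gamma$, $\eta(0)=e$ provided by $L^p$-semiregularity. I would show $\eta\in AC_{L^q}([0,1],G)$: pick a partition $0=t_0<\cdots<t_n=1$ and charts $\varphi_j\colon U_j\to V_j$ with $\eta([t_{j-1},t_j])\subseteq U_j$ and $\varphi_j\circ\eta|_{[t_{j-1},t_j]}\in\ACp{t_{j-1}}{t_j}E$. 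With the smooth map $\sigma\colon G\times\mathfrak{g}\to TG$, $(g,v)\mapsto g.v$, one has, exactly as in the computation in the proof of Proposition~\ref{prop:Evol-smoothness}, $\bigl(\varphi_j\circ\eta|_{[t_{j-1},t_j]}\bigr)^\prime=\bigl[d\varphi_j\circ\sigma\circ(\eta|_{[t_{j-1},t_j]},\gamma|_{[t_{j-1},t_j]})\bigr]$. Since $d\varphi_j\circ\sigma\colon U_j\times\mathfrak{g}\to E$ is continuous and linear in the second argument, $\eta|_{[t_{j-1},t_j]}$ is continuous and $\gamma|_{[t_{j-1},t_j]}\in\Leb q{t_{j-1}}{t_j}{\mathfrak{g}}$, Lemma~\ref{lem:linear2-in-Lp} yields $d\varphi_j\circ\sigma\circ(\eta|_{[t_{j-1},t_j]},\gamma|_{[t_{j-1},t_j]})\in\LLeb q{t_{j-1}}{t_j}E$, whence $\varphi_j\circ\eta|_{[t_{j-1},t_j]}\in AC_{L^q}([t_{j-1},t_j],E)$ and therefore $\eta\in AC_{L^q}([0,1],G)$. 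Thus $G$ is $L^q$-semiregular, and by uniqueness of solutions its evolution map is the (set-theoretic) restriction $\Evol\circ\iota$.

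\emph{Step 2 ($L^q$-regularity).} The map $\Evol\circ\iota\colon\Leb q01{\mathfrak{g}}\to C([0,1],G)$ is smooth, being the composite of the smooth $\iota$, the smooth $\Evol\colon\Leb p01{\mathfrak{g}}\to\ACp 01G$ (smooth by $L^p$-regularity, Definition~\ref{def:Lp-regular-Lie-gr}) and the smooth inclusion $\incl\colon\ACp 01G\to C([0,1],G)$ of Lemma~\ref{lem:incl-AC-C-Lie-gr}. Since $G$ is $L^q$-semiregular by Step~1, Proposition~\ref{prop:Evol-smoothness}, now read with $q$ in place of $p$, applies and gives that $\Evol\colon\Leb q01{\mathfrak{g}}\to AC_{L^q}([0,1],G)$ is smooth, i.e.\ $G$ is $L^q$-regular. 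For \emph{Step 3 ($C^0$-regularity)}, take $\gamma\in C([0,1],\mathfrak{g})\subseteq\Leb p01{\mathfrak{g}}$; then $\eta_\gamma\in\ACp 01G$ exists, and running the chart-wise identity of Step~1 with this continuous $\gamma$ shows that each $\varphi_j\circ\eta_\gamma|_{[t_{j-1},t_j]}$ has a continuous representative of its derivative, so by the Fundamental Theorem of Calculus it is a $C^1$-curve and hence $\eta_\gamma\in C^1([0,1],G)$. Moreover $\evol=\ev_1\circ\Evol$, precomposed with the smooth inclusion $C([0,1],\mathfrak{g})\to\Leb p01{\mathfrak{g}}$, is smooth, using the smoothness of $\Evol$ and of the evaluation $\ev_1\colon\ACp 01G\to G$ from Lemma~\ref{lem:eval-smooth-AC-Liegr}. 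Together these say that $G$ is $C^0$-regular in the sense recalled in the Introduction.

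I expect the only genuinely delicate point to be Step~1, namely arguing that the \emph{a priori} weaker $AC_{L^p}$-regularity of the solution curve automatically improves to $AC_{L^q}$-regularity once one knows $\delta(\eta_\gamma)\in\Leb q01{\mathfrak{g}}$; everything else is an assembly of smoothness statements already established. One should also take care that the version of $C^0$-regularity invoked matches the cited literature, so that Step~3 indeed yields it (in particular, whether smoothness of $\evol$ into $G$, or of $\Evol$ into $C^1([0,1],G)$, is what is demanded).
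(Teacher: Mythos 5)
Your proposal is correct and follows essentially the same route as the paper: continuous (hence smooth) inclusions of Lebesgue spaces, the reduction of $L^q$-regularity to smoothness of $\Evol$ into $C([0,1],G)$ via Proposition~\ref{prop:Evol-smoothness}, and the same assembly for $C^0$-regularity. Your Step~1 — checking via the chart-wise identity $(\varphi_j\circ\eta|_{[t_{j-1},t_j]})^\prime=[d\varphi_j\circ\sigma\circ(\eta,\gamma)|_{[t_{j-1},t_j]}]$ and Lemma~\ref{lem:linear2-in-Lp} that the $L^p$-solution actually lies in $AC_{L^q}([0,1],G)$ — supplies a verification the paper leaves implicit, and is a worthwhile addition rather than a deviation.
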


\begin{proof}
Assume that $G$ is $L^p$-regular and $q\geq p$. Since 
$\Leb q01{\mathfrak{g}}\subseteq \Leb p01{\mathfrak{g}}$
with a smooth inclusion map (Remark \ref{rem:Lp-inclusions}),
the Lie group $G$ is $L^q$-semiregular
and $\Leb q01{\mathfrak{g}}\to C([0,1],G), \gamma\mapsto \Evol(\gamma)$ is smooth.
From Proposition \ref{prop:Evol-smoothness} follows, that $\Leb q01{\mathfrak{g}}\to AC_{L^q}([0,1],G),
\gamma\mapsto \Evol(\gamma)$ is smooth, whence $G$ is $L^q$-regular.

Further, since $C([0,1],\mathfrak{g})\subseteq \Leb p01{\mathfrak{g}}$, the Lie group is
$C^0$-semiregular. Since the inclusion map $\incl\colon C([0,1],\mathfrak{g})\to \Leb p01{\mathfrak{g}}$
is smooth, as well as the evaluation map $\ev_1\colon C([0,1],G)\to G$, the composition
$C([0,1],\mathfrak{g})\to G, \gamma\mapsto \Evol(\gamma)(1)$
is smooth, whence $G$ is $C^0$-regular.
\end{proof}

The next Proposition shows that it suffices for a Lie group $G$ to be $L^p$-regular,
if it is merely \emph{locally $L^p$-regular} (see \cite[Definition 5.19, Proposition 5.25]{GlMR}).

\begin{proposition}\label{prop:Lie-gr-locally-iff-regular}
Let $G$ be a Lie group modelled on a sequentially complete locally convex
space $E$, let $\mathfrak{g}$ denote the Lie algebra of $G$. 
Let $\Omega\subseteq \Leb p01{\mathfrak{g}}$ be an open $0$-neighbourhood.
If for every $\gamma\in\Omega$ the initial value problem (\ref{eq:initial-value-problem})
has a (necessarily unique) solution $\eta_{\gamma}\in\ACp 01G$, then $G$ is $L^p$-semiregular.
If, in addition, the function $\Evol\colon\Omega\to\ACp 01G, \gamma\mapsto \eta_{\gamma}$
is smooth, then $G$ is $L^p$-regular.
\end{proposition}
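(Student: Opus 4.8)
The plan is to reduce the general initial value problem to problems with data in the small neighbourhood $\Omega$ by using the subdivision and locality machinery developed earlier (Lemmas \ref{lem:Lp-subdivision}, \ref{lem:Lp-locality-axiom}, \ref{lem:AC-vecsp-locality-axiom}, \ref{lem:AC-Lie-gr-locality-axiom}) together with the affine-linear reparametrisation rules for logarithmic derivatives (Lemma \ref{lem:log-derivative-rules}(v)) and the product rule (Lemma \ref{lem:log-derivative-rules}(i),(iii)). Concretely, given an arbitrary $\gamma\in\Leb p01{\mathfrak{g}}$, I would first fix a continuous seminorm-free description of $\Omega$: since $\Omega$ is an open $0$-neighbourhood, it contains a basic neighbourhood, and by Lemma \ref{lem:Lp-subdivision} the rescaled pieces $\gamma_{n,k}(t)=\tfrac1n\gamma(\tfrac{k+t}{n})$ satisfy $\max_k\|\gamma_{n,k}\|_{\mathcal{L}^p,q}\to 0$ for every continuous seminorm $q$, so for $n$ large enough all the $n$ pieces $\gamma_{n,0},\dots,\gamma_{n,n-1}$ lie in $\Omega$ simultaneously. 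Fix such an $n$ and set $t_j:=j/n$.

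The second step is to build $\eta_\gamma$ by concatenation. For each $k$ let $\xi_k:=\Evol(\gamma_{n,k})\in\ACp 01G$ be the given local solution, so $\delta(\xi_k)=\gamma_{n,k}$ and $\xi_k(0)=e$. Using the affine bijection $f_{n,k}\colon[t_k,t_{k+1}]\to[0,1]$, Lemma \ref{lem:log-derivative-rules}(v) (in the form applied to $\xi_k\circ f_{n,k}^{-1}$, noting the factor $\tfrac1n$ is exactly compensated by the $\tfrac1n$ in the definition of $\gamma_{n,k}$) shows that $\xi_k\circ f_{n,k}^{-1}\in\ACp {t_k}{t_{k+1}}G$ has left logarithmic derivative $\gamma|_{[t_k,t_{k+1}]}$. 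Now define $\eta_\gamma$ recursively: on $[t_0,t_1]$ take $\eta_\gamma:=\xi_0\circ f_{n,0}^{-1}$, and on $[t_k,t_{k+1}]$ take $\eta_\gamma(t):=\eta_\gamma(t_k)\cdot\bigl(\xi_k\circ f_{n,k}^{-1}\bigr)(t)$; left translation by the fixed element $\eta_\gamma(t_k)$ does not change the logarithmic derivative (Lemma \ref{lem:log-derivative-rules}(iii)), so each piece has logarithmic derivative $\gamma|_{[t_k,t_{k+1}]}$, the pieces agree at the breakpoints by construction, and $\eta_\gamma(0)=e$. By Remark \ref{rem:restriction-abs-cont-vecsp} (in its manifold/Lie-group guise via Lemma \ref{lem:AC-Lie-gr-locality-axiom}) the glued curve is in $\ACp 01G$ and solves (\ref{eq:initial-value-problem}); uniqueness is Lemma \ref{lem:log-derivative-rules}. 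This proves $L^p$-semiregularity.

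For the regularity statement, assume in addition $\Evol\colon\Omega\to\ACp 01G$ is smooth, and work with the $n$ fixed as above (shrinking $\Omega$ if needed so that the whole open set $\Omega$, not just a neighbourhood of a point, feeds into all $n$ slots — more carefully, fix $\bar\gamma\in\Leb p01{\mathfrak{g}}$, choose $n$ so that $\bar\gamma_{n,k}\in\Omega$ for all $k$, and note that $\gamma\mapsto\gamma_{n,k}$ is continuous linear by Lemma \ref{lem:Lp-subdivision}'s proof, hence there is an open neighbourhood $P$ of $\bar\gamma$ with $\gamma_{n,k}\in\Omega$ for all $\gamma\in P$ and all $k$). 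On $P$ the map $\gamma\mapsto\xi_k=\Evol(\gamma_{n,k})$ is a composition $\Evol\circ(\gamma\mapsto\gamma_{n,k})$ of a smooth map with a continuous linear map, hence smooth into $\ACp 01G$; the reparametrisation $\ACp 01G\to\ACp {t_k}{t_{k+1}}G$ is smooth by Lemma \ref{lem:affine-linear-AC-liegr}; evaluation $\eta\mapsto\eta(t_k)$ is smooth by Lemma \ref{lem:eval-smooth-AC-Liegr}; left translation and multiplication in the Lie group $\ACp {t_k}{t_{k+1}}G$ are smooth; and finally $\Gamma_G^{-1}$ from Lemma \ref{lem:AC-Lie-gr-locality-axiom} is smooth. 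Composing, $\gamma\mapsto\eta_\gamma$ is smooth on $P$, so $\Evol$ is smooth in a neighbourhood of the arbitrary point $\bar\gamma$, hence smooth. The main obstacle I anticipate is purely bookkeeping: carefully tracking the $\tfrac1n$ factors and the orientation of $f_{n,k}$ so that the logarithmic derivative of each reparametrised, left-translated piece comes out to be exactly $\gamma$ restricted to the corresponding subinterval (rather than a rescaled or shifted version), and verifying that the recursively-defined left-translation constants $\eta_\gamma(t_k)$ depend smoothly on $\gamma$ — which is why the evaluation and multiplication smoothness lemmas are invoked in that order.
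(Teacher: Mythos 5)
Your proposal is correct and follows essentially the same route as the paper's proof: subdivide $\gamma$ via Lemma \ref{lem:Lp-subdivision} so that all pieces $\gamma_{n,k}$ land in $\Omega$, concatenate the reparametrised local evolutions with left translations by the endpoint values, verify $\delta(\eta_\gamma)=\gamma$ using Lemma \ref{lem:log-derivative-rules}(iii),(v), and obtain smoothness locally from the continuity of $\gamma\mapsto\gamma_{n,k}$ together with the smoothness of reparametrisation, evaluation, multiplication and $\Gamma_G^{-1}$. The only slip is notational: with $f_{n,k}\colon[t_k,t_{k+1}]\to[0,1]$ as you declare it, the reparametrised piece should be $\xi_k\circ f_{n,k}$, not $\xi_k\circ f_{n,k}^{-1}$.
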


\begin{proof}
First, fix some $\gamma\in\Leb p01{\mathfrak{g}}$ and for 
$n\in\N$, $k\in\{0,\ldots,n-1\}$ define $\gamma_{n,k}\in\Leb p01{\mathfrak{g}}$
as in (\ref{eq:subdivision-function}). Let $Q$ be a continuous seminorm
on $\Leb p01{\mathfrak{g}}$ such that $B_1^{Q}(0)\subseteq\Omega$. By Lemma
\ref{lem:Lp-subdivision}, there exists some $n\in\N$ such that
$\gamma_{n,k}\in\Omega$ for $k\in\{0,\ldots,n-1\}$. We set 
$\eta_{n,k}:=\Evol(\gamma_{n,k})\in\ACp 01G$ and define
$\eta_{\gamma}\colon[0,1]\to G$ via
	\begin{align}\label{eq:Evol-constr-part1}
		\eta_{\gamma}(t):=(\eta_{n,0}\circ f_{n,0})(t),\quad \mbox{for } t\in[0,\nicefrac{1}{n}],
	\end{align}
and
	\begin{align}\label{eq:Evol-constr-part2}
	\eta_{\gamma}(t):=\eta_{n,0}(1)\cdots\eta_{n,k-1}(1)(\eta_{n,k}\circ f_{n,k})(t),\quad
	\mbox{for } t\in[\nicefrac{k}{n},\nicefrac{k+1}{n}],
	\end{align}
where
	\begin{align*}
		f_{n,k}\colon[\nicefrac{k}{n},\nicefrac{k+1}{n}]\to[0,1],\quad
		f_{n,k}(t):=nt-k.
	\end{align*}
Then we easily verify that the function $\eta_{\gamma}$ is continuous and from
Lemma \ref{lem:affine-linear-AC-liegr} follows that 
$\eta_{\gamma}|_{[\nicefrac{k}{n},\nicefrac{k+1}{n}]}
\in\ACp {\nicefrac{k}{n}}{\nicefrac{k+1}{n}}G$, whence $\eta_{\gamma}\in\ACp 01G$.
Furthermore, $\eta_{\gamma}(0)=e$ and $\delta(\eta_{\gamma}) = \gamma$,
see Lemma \ref{lem:log-derivative-rules}(v). Consequently, $\Evol(\gamma):=\eta_{\gamma}$ solves
the initial value problem in (\ref{eq:initial-value-problem}) for $\gamma$, whence
$G$ is $L^p$-semiregular.

Now, assume that $\Evol\colon\Omega\to\ACp 01G$ is smooth; we will show the smoothness
of $\Evol$ on some open neighborhood of $\gamma$. 
From the
continuity of each
	\begin{align*}
		\pi_{n,k}\colon\Leb p01{\mathfrak{g}}\to\Leb p01{\mathfrak{g}},\quad
		\xi\mapsto \xi_{n,k},
	\end{align*}
(see Lemma \ref{lem:affine-linear-Lp}) follows that there exists an open
neighborhood $W\subseteq\Leb p01{\mathfrak{g}}$ of $\gamma$ such that
$\pi_{n,k}(W)\subseteq \Omega$ for every $k\in\{0,\ldots,n-1\}$. Then
	\begin{align*}
		\Evol\colon W\to\ACp 01G,\quad \xi\mapsto\eta_{\xi}
	\end{align*}
is defined, where $\eta_{\xi}$ is as in (\ref{eq:Evol-constr-part1}) and 
(\ref{eq:Evol-constr-part2}). It will be smooth if we show 
(using Lemma \ref{lem:AC-Lie-gr-locality-axiom}) that each
	\begin{align}\label{eq:Evol-locally}
		W\to \ACp {\nicefrac{k}{n}}{\nicefrac{k+1}{n}}G,\quad 
		\xi\mapsto \eta_{\xi}|_{\left[\nicefrac{k}{n},\nicefrac{k+1}{n}\right]}
	\end{align}
is smooth.
But, by construction, we have
	\begin{align*}
		\eta_{\xi}|_{[0,\nicefrac{1}{n}]} = \Evol(\xi_{n,0})\circ f_{n,0}
	\end{align*}
and
	\begin{align*}
		\eta_{\xi}|_{[\nicefrac{k}{n},\nicefrac{k+1}{n}]}
		= \evol(\xi_{n,0})\cdots\evol(\xi_{n,k-1})\Evol(\xi_{n,k})\circ f_{n,k},
	\end{align*}
so the smoothness of (\ref{eq:Evol-locally}) follows from Lemma \ref{lem:affine-linear-AC-liegr}
and Remark \ref{rem:regular-evol-smooth}.
\end{proof}

\end{document}